\newcommand{\Q}{\mathbb{Q}}
\newcommand{\Z}{\mathbb{Z}}
\newcommand{\F}{\mathbb{F}}
\newcommand{\R}{\mathbb{R}}
\DeclareMathOperator{\kar}{char}
\DeclareMathOperator{\cd}{cd}
\newcommand{\llangle}{\mathopen{\langle\langle}}
\newcommand{\rrsquare}{\mathclose{]]}}
\newtheorem{theorem}{Theorem}[section]
\newtheorem{proposition}[theorem]{Proposition}
\newtheorem{lemma}[theorem]{Lemma}
\newtheorem{corollary}[theorem]{Corollary}
\newtheorem{fact}[theorem]{Fact}
\newtheorem{definition}[theorem]{Definition}
\newtheorem{hypothesis}[theorem]{Hypothesis}
\theoremstyle{remark}
\newtheorem{remark}[theorem]{Remark}
\newcommand{\noopsort}[1]{}
\title[Defining Subrings in Finitely Generated Fields]{Defining Subrings in Finitely Generated Fields of All Characteristics}
\thanks{This is an updated version of the manuscript \cite{definingSubringsOfFinGenFieldsOfCharNotTwo}. A proof of the main result in characteristic away from two has also been announced by Pop in \cite{PopDistinguishingEveryFinitelyGeneratedFieldOfCharNotTwo}.}
\author{Philip Dittmann}
\address{KU Leuven, Afdeling Algebra, Celestijnenlaan 200b, 3001 Leuven, Belgium}
\email{philip.dittmann@kuleuven.be}
\subjclass[2010]{12L99, 14G25}
\date{\today}
\begin{document}

\begin{abstract}
  We give a construction of a large first-order definable family of subrings of finitely generated fields $K$ of any characteristic. We deduce that for any such $K$ there exists a first-order sentence $\varphi_K$ characterising $K$ in the class of finitely generated fields, i.e.~such that for any finitely generated field
$L$ we have $L \models \varphi_K$ if and only if $L \cong K$. This answers a question considered by Pop and others.
In characteristic two, our results depend on resolution of singularities, whereas they are unconditional in all other characteristics.
\end{abstract}

\maketitle

\section{Introduction}

First-order logic naturally applies to the study of fields. Consequently, it is of interest to investigate the expressive power of first-order logic in certain classes of fields.
This is well-understood in the cases of algebraically closed fields, real-closed fields and $p$-adic fields.
On the other hand, it is known by \cite{RumelyUndecidabilityGlobalFields} that in global fields, essentially due to Gödelian phenomena, first-order logic is very expressive, and the class of definable set is very complicated.
For infinite finitely generated fields, many questions about the expressive power are open; see \cite{poonenUniformDefnsInFinGenFields} for a discussion.

This article is concerned with a question explicitly raised by Pop in \cite{elemEquivVsIsomI} (although implicitly asked earlier), namely whether non-isomorphic finitely generated fields are distinguishable in first-order logic, i.e.\ have different first-order theory.
This may be strengthened to the question whether for every finitely generated field $K$ there is a single sentence $\varphi_K$ in the language of rings such that for any finitely generated field $L$ we have $L \models \varphi_K$ if and only if $L \cong K$.
The analogous question for rings was recently answered affirmatively in \cite{AschenbrennerKhelifNaziazenoScanlon}.

Pop in \cite{elemEquivVsIsomII} answered this stronger question positively for finitely generated fields $K$ of Kronecker dimension $< 3$.
Recall here that the Kronecker dimension of a field $K$ is the transcendence degree of $K$ over its prime field if $K$ is of positive characteristic, and one plus the transcendence degree over $\Q$ if $K$ is of characteristic zero; a finitely generated field of Kronecker dimension $2$ is hence a function field in one variable over a global field.

We consider finitely generated $K$ of Kronecker dimension $d \geq 2$. We prove the following.
\begin{theorem}\label{thm:intro_exists_small_definable_subring}
  If $\kar K = 2$, assume that $d \leq 3$ or resolution of singularities holds over finite fields of characteristic two (Hypothesis \ref{hyp:resolution_of_singularities}).
  There is a subring of $K$, first-order definable with parameters, which is a finitely generated algebra over the prime field and has quotient field $K$.
\end{theorem}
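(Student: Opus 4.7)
The plan is to extract a finitely generated definable subring of $K$ from the large definable family of subrings $\{R_t\}_{t \in T}$ produced earlier in the paper, by intersecting a definable sub-family that captures all but finitely many geometric valuations of $K$. I begin by fixing a normal finitely generated subring $R_0 \subset K$ with $\mathrm{Frac}(R_0) = K$, which exists since $K$ is finitely generated over its prime field. Being a Noetherian normal domain, $R_0$ is a Krull domain, so $R_0 = \bigcap_\mathfrak{p} (R_0)_\mathfrak{p}$ with $\mathfrak{p}$ ranging over height-one primes; each $(R_0)_\mathfrak{p}$ is the valuation ring of a discrete valuation $v_\mathfrak{p}$ of $K$, namely the divisorial valuations of the integral model $\mathrm{Spec}\, R_0$.

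The central step is to isolate, inside the definable family, a definable subset $T_0 \subset T$ (using parameters encoding $R_0$) such that $\{R_t : t \in T_0\}$ exhausts the DVRs $(R_0)_\mathfrak{p}$ for all but finitely many $\mathfrak{p}$. Picking DVRs out of the $R_t$ should be possible by a first-order condition: a DVR is characterized among subrings of $K$ by having a unique maximal ideal with residue field of strictly smaller Kronecker dimension (enabling induction on $d$) and by being maximal as a valuation subring of $K$. Imposing in addition that the valuation is non-negative on $R_0$ restricts to the divisorial valuations of $\mathrm{Spec}\, R_0$.

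With such a $T_0$ in hand, I define $R' = \{x \in K : \forall t \in T_0,\ x \in R_t\}$, which is first-order definable with parameters. By construction $R_0 \subseteq R' \subseteq \bigcap_{\mathfrak{p} \in S}(R_0)_\mathfrak{p}$ for a cofinite set $S$ of height-one primes. The right-hand ring is the ring of regular functions on a dense open of $\mathrm{Spec}\, R_0$, obtained by inverting finitely many elements, hence a finitely generated algebra over the prime field. A small post-processing step, for instance intersecting $R'$ with a few further explicitly constructed definable members of the family to excise the missed primes, arranges that $R'$ itself equals such a localization. Since $R_0 \subseteq R'$, the fraction field of $R'$ is all of $K$.

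The chief obstacle is the first-order characterization of $T_0$: it depends on the fine structure of the definable family established in the preceding sections and on exploiting its richness together with parameters tailored to $R_0$. In characteristic two, the standard tools for probing discrete valuations — Pfister forms and Hilbert symbols — interact badly with residue characteristic two, which explains why the $\kar K = 2$ case invokes resolution of singularities: it gives access to regular models whose divisorial valuations admit descriptions bypassing the quadratic-form machinery.
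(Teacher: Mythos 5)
Your central step is exactly the hard point, and it is not available: neither the paper's definable family nor any known construction definably isolates the divisorial valuation rings $(R_0)_{\mathfrak{p}}$ of a model. The family produced in Sections \ref{sec:defn_of_S}--\ref{sec:building_subrings} consists of sets $S_c(\llangle a_0,\dotsc,a_d\rrsquare/K)\cdot K_2^\times$ which, when they are rings (Proposition \ref{prop:constructing_ring}), are intersections of valuation rings of valuations trivial on a relatively algebraically closed subfield $K_2$ of Kronecker dimension $d-1$; the Pfister-form/local--global machinery only controls isotropy at composite valuations of archimedean rank $d$ with \emph{finite} residue fields (Proposition \ref{prop:local_S_computation}, Corollary \ref{cor:S_loc_glob}). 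A divisorial DVR $(R_0)_{\mathfrak{p}}$ has residue field of Kronecker dimension $d-1$, in general contains no subfield of Kronecker dimension $d-1$ (e.g.\ the ``vertical'' primes lying over a rational prime in characteristic zero, where the valuation is nontrivial on $\Q$), and so simply does not occur in the family; moreover ``has a unique maximal ideal'' and ``is maximal as a valuation subring'' are not first-order conditions on a parameter tuple unless one already has detailed structural information about the family. So the subset $T_0$ you need is not known to be definable, and producing it would amount to solving a harder uniform definability problem than the one the paper actually solves. (Your closing remark about characteristic two is also off: resolution of singularities is invoked to have Kato's local--global principle, i.e.\ Theorem \ref{thm:KerzSaitoFiniteField}, available; the quadratic-form machinery is still used there.)

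Even granting $T_0$, the finiteness argument has a gap. The complement of finitely many prime Weil divisors in $\operatorname{Spec} R_0$ is a principal open set only when those divisors are principal (e.g.\ $R_0$ a UFD), so ``obtained by inverting finitely many elements'' is unjustified for an arbitrary normal $R_0$; and, more seriously, a ring merely sandwiched between $R_0$ and such a localisation need not be finitely generated at all (e.g.\ $k[x,y]\subseteq k[x,y,y/x,y/x^2,\dotsc]\subseteq k[x,y,1/x]$), so the ``post-processing step'' is where the real work would have to happen. The paper's route avoids both issues: it takes $R_0=K_0[t_1,\dotsc,t_d]$ and a primitive integral element $s$, intersects only those members of the family that contain $R_0$ and $s$, and shows (Lemma \ref{lem:not_integral_exists_bad_divisor} together with Lemmas \ref{lem:anisotropic_over_dvf}, \ref{lem:exist_Pfister_forms_remaining_anisotropic} and Proposition \ref{prop:constructing_ring}) that for every $x\in K$ not integral over $R_0$ some member of the family omits $x$ while still containing $R_0[s]$. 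Hence the definable intersection $R(t_1,\dotsc,t_d,s)$ is squeezed between $R_0[s]$ and the integral closure of $R_0$ in $K$, which is a \emph{finite $R_0$-module}; that sandwich, not a localisation sandwich, is what yields finite generation, and definability of the family follows from Poonen's uniform formulas for algebraic dependence together with the existential description of $S_c$ (Proposition \ref{prop:definable}).
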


Using results adapted from \cite{AschenbrennerKhelifNaziazenoScanlon}, one deduces a positive answer to the question above.
\begin{corollary}[Corollary \ref{cor:final_biinterpretable_quasiaxiomatisable}] \label{cor:intro_biinterpretable_quasiaxiomatisable}
   If $\kar K = 2$, assume that $d \leq 3$ or resolution of singularities holds over finite fields of characteristic two.
   Then $K$ as a structure in the language of rings is bi-interpretable with $\Z$.
   In particular, there is a sentence $\varphi_K$ such that for any finitely generated field $L$ we have $L \models \varphi_K$ if and only if $L \cong K$.
\end{corollary}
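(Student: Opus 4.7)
The plan is to combine Theorem~\ref{thm:intro_exists_small_definable_subring} with the main results of \cite{AschenbrennerKhelifNaziazenoScanlon}, where it is shown that every infinite finitely generated commutative ring is bi-interpretable with $\Z$ and is characterised among finitely generated commutative rings by a single first-order sentence.

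First I would apply Theorem~\ref{thm:intro_exists_small_definable_subring} to fix parameters $\bar a \in K$ and a formula $\rho(x;\bar a)$ defining a subring $R \subseteq K$ that is finitely generated as an algebra over the prime field, with $\mathrm{Frac}(R) = K$. If $\kar K = p > 0$, then $R$ is already a finitely generated commutative ring. If $\kar K = 0$, I would augment $\bar a$ by a finite tuple $\bar x \in R$ of $\Q$-algebra generators of $R$ and pass to the $\Z$-subalgebra $A = \Z[\bar x] \subseteq R$, which is a finitely generated ring satisfying $\mathrm{Frac}(A) = K$ inside $K$. Writing $A$ for the chosen subring in either case, $A$ is interpretable in $K$ via $\rho$ (together with the extra parameters in the characteristic zero case), while $K$ is interpreted in $A$ as the field of fractions of $A$ in the standard way.

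Composing these interpretations with the AKNS bi-interpretation between $A$ and $\Z$ yields a bi-interpretation between $K$ and $\Z$; verifying that the two round-trip compositions are definably isomorphic to the identity structures of $K$ and of $\Z$ reduces to the corresponding check for each factor, the $A \leftrightarrow \Z$ piece being the content of \cite{AschenbrennerKhelifNaziazenoScanlon} and the $K \leftrightarrow A$ piece being routine. To produce the sentence $\varphi_K$, let $\sigma_A$ be the sentence from \cite{AschenbrennerKhelifNaziazenoScanlon} characterising $A$ among finitely generated rings, and let $\varphi_K$ assert that there exist parameters such that the corresponding definable subring $A'$ satisfies $\sigma_A$ and has the ambient field as its field of fractions. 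Any finitely generated field $L$ satisfying $\varphi_K$ then yields a subring $A' \subseteq L$ satisfying $\sigma_A$ with $\mathrm{Frac}(A') = L$; once one observes that $A'$ is itself finitely generated (it is contained in the finitely generated field $L$ and cut out by the fixed parameters), \cite{AschenbrennerKhelifNaziazenoScanlon} forces $A' \cong A$, and hence $L \cong K$.

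The delicate point is the uniformly definable extraction, in the characteristic zero case, of the finitely generated $\Z$-subalgebra $A = \Z[\bar x]$ from the $\Q$-algebra $R$ provided by Theorem~\ref{thm:intro_exists_small_definable_subring}, since ``generated over $\Z$ by the tuple $\bar x$'' is not prima facie a first-order condition on elements of $K$. Resolving it likely requires either a closer look at the explicit construction of $R$ or an auxiliary definition of $\Z$ inside $K$ of the sort developed in \cite{RumelyUndecidabilityGlobalFields} and \cite{poonenUniformDefnsInFinGenFields}; once it is in place, the remainder of the argument is a direct transcription of the machinery of \cite{AschenbrennerKhelifNaziazenoScanlon}.
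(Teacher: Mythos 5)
Your plan follows the paper's general strategy (definable subring plus the machinery of \cite{AschenbrennerKhelifNaziazenoScanlon}), but it has two genuine gaps, and the first is exactly the point where the paper has to do real work. In characteristic zero the ring $R$ produced by Theorem \ref{thm:intro_exists_small_definable_subring} is a finitely generated \emph{$\Q$-algebra}, not a finitely generated ring, so \cite{AschenbrennerKhelifNaziazenoScanlon} does not apply to it as written; your fix is to pass to $A=\Z[\bar x]$, but then the bi-interpretation $K\leftrightarrow A$ needs an interpretation of $A$ in $K$, and the set $\Z[\bar x]$ is a countable union of definable sets (one per degree bound) with no evident first-order definition -- you flag this yourself and leave it unresolved, so the characteristic-zero case is not actually proved. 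The paper avoids the extraction altogether: it keeps the definable ring $R$ (Lemma \ref{lem:K_R_biinterpretable} gives the routine bi-interpretation of $K$ with $R$) and proves a separate proposition that any integral domain finitely generated over $\Q$ or $\F_p(t)$ is bi-interpretable with $\Z$, by noting that the relative algebraic closure $D$ of the ground field in $R$ is a global field definable in $R$ (via Poonen's predicates in the fraction field), that $D$ is bi-interpretable with $\Z$ by Rumely's definability of G\"odel functions \cite{RumelyUndecidabilityGlobalFields}, and that the proof of \cite[Theorem 3.1]{AschenbrennerKhelifNaziazenoScanlon} then goes through verbatim with $D$ as the base domain. (Also, the blanket statement that every infinite finitely generated commutative ring is bi-interpretable with $\Z$ is not what AKNS prove; bi-interpretability is established there for integral domains, which suffices here but should be stated correctly.)

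The second gap is in your construction of $\varphi_K$. You let $\varphi_K$ assert the existence of parameters for which the definable subring $A'$ satisfies the AKNS sentence $\sigma_A$ and has fraction field the ambient field, and you conclude $A'\cong A$ "once one observes that $A'$ is itself finitely generated (it is contained in the finitely generated field $L$ and cut out by the fixed parameters)". That observation is false as stated: definable subrings of finitely generated fields need not be finitely generated rings (e.g.\ $\Q$ itself, or valuation rings are definable subrings in many situations), and $\sigma_A$ only characterises $A$ among \emph{finitely generated} rings, so satisfying $\sigma_A$ alone gives no control over an arbitrary definable $A'$. The paper sidesteps this by not routing the sentence through $\sigma_A$ at all: having shown $K$ is bi-interpretable with $\Z$, it enriches the language of rings by the function $\operatorname{inv}$ (so that finitely generated fields become finitely generated structures) and invokes \cite[Proposition 2.28]{AschenbrennerKhelifNaziazenoScanlon}, which says that a finitely generated structure bi-interpretable with $\Z$ is characterised by a single sentence among finitely generated structures; this yields $\varphi_K$ directly (Corollary \ref{cor:final_biinterpretable_quasiaxiomatisable}). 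To repair your version you would essentially have to reprove that proposition, so the cleaner route is the paper's.
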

Here bi-interpretability is a somewhat subtle notion from model theory, see \cite[Section 2]{AschenbrennerKhelifNaziazenoScanlon}, which is a priori stronger than the axiomatisability property from Pop's question; however, all known approaches to the question do in fact yield bi-interpretability with $\Z$.

Bi-interpretability with $\Z$ implies in particular that ``any conceivable subset is definable'': one may label the elements of $K$ by the integers in such a way that addition and multiplication in $K$ are arithmetically definable, and any arithmetically definable subset of the integers is already definable in the field $K$ (\cite[Lemma 2.17]{AschenbrennerKhelifNaziazenoScanlon}).

The proof of Theorem \ref{thm:intro_exists_small_definable_subring} is based at its core on Pfister forms, which have frequently been used in definability problems over finitely generated fields, for instance in \cite{elemEquivVsIsomI} and \cite{poonenUniformDefnsInFinGenFields}.
We combine this with a local--global principle in cohomology, first conjectured by Kato, that has also been used in \cite{elemEquivVsIsomII}.
One innovation over \cite{elemEquivVsIsomII} lies in using this local--global principle without requiring resolution of singularities in characteristic away from two, by relying on alterations instead.

\subsection{Proof outline}

The method is in the wider sense a variation of the technique used in \cite{poonenUniversalExistential} for a definition of $\Z$ in $\Q$.
We define a predicate $S_c(\llangle a_0, \dotsc, a_d\rrsquare/K)$, associating to elements $c, a_0, \dotsc, a_d \in K$ a subset of $K$; this predicate is diophantine, i.e.\ existentially definable.

For certain values of $c$ and the $a_i$, we can determine the set $S_c(\llangle a_0, \dotsc, a_d\rrsquare/K)$ by using a local--global principle (Corollary \ref{cor:S_loc_glob}) to reduce to the simpler situation of an henselian field with finite residue field, and in this situation we have a good partial description (Proposition \ref{prop:local_S_computation}).

With some restriction on $c$ and the $a_i$, the set $S_c(\llangle a_0, \dotsc, a_d\rrsquare/K)$ is ``almost'' an intersection of valuation rings: There exists a set of valuation rings with finite residue field such that $S_c(\llangle a_0, \dotsc, a_d\rrsquare/K)$ is contained in their intersection, and contains the intersection of their maximal ideals.
This allows us to define a large family of subrings of $K$ in Proposition \ref{prop:constructing_ring}, by a simple argument using the constructible topology on the space of valuations on $K$.

Taking the intersection over a suitable subfamily, we obtain a subring of $K$ finitely generated over the prime field.

The treatment of a field of characteristic zero inductively relies on the case of positive odd characteristic. 
The case of characteristic two is not relied on by the other cases and requires some special care, mainly due to Galois cohomology with $\Z/2$-coefficients being unsatisfactory in this situation. The reader may hence choose to ignore this case throughout.

The use of the terminology and techniques from mathematical logic is confined to the last section, while the bulk of the work is algebraic.

\subsection{Acknowledgements}

The contents of sections \ref{sec:defn_of_S}, \ref{sec:over_henselian_fields} and \ref{sec:local_global} have previously appeared in the third chapter of my doctoral dissertation \cite{DittmannThesis}. I would like to take this opportunity to thank my doctoral adviser, Jochen Koenigsmann, for his support and advice.

\section{Pfister forms, and the definition of $S$}
\label{sec:defn_of_S}

The use of Pfister forms for definability problems over finitely generated fields is well-established, see for instance \cite{elemEquivVsIsomI} or \cite{poonenUniformDefnsInFinGenFields}; however, characteristic two seems to have been avoided so far.
We follow the terminology of \cite[Chapter II]{ElmanKarpenkoMerkurjev_AlgGeomTheoryOfQuadForms} in all characteristics.

Over a field $K$ of characteristic not two, a $1$-fold Pfister form is a quadratic form $(x,y) \mapsto x^2 - ay^2$ for some $a \in K^\times$.
On the other hand, for $K$ of characteristic two a $1$-fold Pfister form is of the form $(x,y) \mapsto x^2 + xy + ay^2$ for some $a \in K$.
In either case, this $1$-fold Pfister form is denoted $\llangle a\rrsquare$.

Inductively, we define a $k+1$-fold Pfister form $\llangle a_1, \dotsc, a_{k+1}\rrsquare$ to be the orthogonal sum $\llangle a_2, \dotsc, a_{k+1}\rrsquare \bot (-a_1)\llangle a_2, \dotsc, a_{k+1}\rrsquare$.
In this way, we have $k$-fold Pfister forms for all $k \geq 1$, and these are quadratic forms of dimension $2^k$.

We call the Pfister form $q$ \emph{isotropic} (over $K$) if it has a non-trivial zero in $K^{2^k}$, and \emph{anisotropic} otherwise.

\begin{definition}\label{defn:S}
  Given a $k$-fold Pfister form $q = \llangle a_1, \dotsc, a_k\rrsquare$ over $K$ and an element $c \in K$, we define $S_c(q/K) \subseteq K$ as follows.
  \begin{itemize}
  \item If $q$ is isotropic over $K$, we let $S_c(q/K) = K$.
  \item If $q$ is anisotropic over $K$, we let
    \begin{multline*} S_c(q/K) = \{ x \in K \colon X^2+ (1-x)X+c \text{ is irreducible over $K$ and} \\ \text{$q$ is isotropic over $K[X]/(X^2+(1-x)X+c)$} \}. \end{multline*}
  \end{itemize}
\end{definition}
For an extension field $L/K$, we can interpret $q$ as a Pfister form over $L$ (i.e.~we notationally suppress the base change of quadratic forms), given by the same $a_i$, and refer to $S_c(q/L)$.
Evidently we have $S_c(q/L) \supseteq S_c(q/K)$ in this situation.
\begin{remark}\label{rem:alternative_defn_S'}
  The definition of $S_c(q/K)$ may look very ad hoc, but can equivalently be phrased without a case distinction. Fix $x \in K$ and write $A=K[X]/(X^2+(1-x)X+c)$. Then the following are equivalent.
  \begin{itemize}
    \item $x \in S_c(q/K)$;
    \item $q$ has a zero in $\mathbb{P}^{2^k-1}_A$;
    \item there exists a zero $x_1, \dotsc, x_{2^k} \in A$ of $q$ such that $(x_1, \dotsc, x_{2^k})$ is the unit ideal in $A$.
    \end{itemize}
    The equivalence is clear if $X^2+(1-x)+c$ is irreducible over $K$ and so $A$ is a field; otherwise, $X^2+(1-x)X+c$ has a linear factor, and hence we have $K$-algebra homomorphisms $K \hookrightarrow A \twoheadrightarrow K$, so all three conditions are equivalent to $q$ being isotropic over $K$.
\end{remark}

\begin{remark}
  In \cite[Section 2]{poonenUniversalExistential}, Poonen defines a set $S_{a,b} = \{ 2x \in K \colon \exists y,z,w \in K \colon x^2 - ay^2 - bz^2 + abw^2 = 1 \}$, working in characteristic away from two.
  One can show, see \cite[Proposition 2.2.3]{DittmannThesis}, that this relates to the definition above by $S_{a,b} = (1 - S_1(\llangle a, b \rrsquare/K)) \cup \{ -2, 2\}$.
  This may serve as motivation for our definition, but will play no role in the sequel.
\end{remark}

To investigate the sets $S_c(q/K)$, we use a connection with Galois cohomology.
For a field $K$ and integer $i \geq 1$, we write $H^{i}(K)$ for the Galois cohomology group $H^i(K, \Z/2)$ if $\kar K \neq 2$.
If $\kar K = 2$, we write $H^i(K)$ for the Galois cohomology group $H^1(K, K_{i-1}^M(K^{\mathrm{sep}})/2)$, where  $K_{i-1}^M(K^{\mathrm{sep}})$ is the $(i-1)$-th Milnor $K$-group of a separable closure of $K$. 
This definition follows \cite[§101]{ElmanKarpenkoMerkurjev_AlgGeomTheoryOfQuadForms}, where it is also notated $H^{i, {i-1}}(K, \Z/2)$, and agrees with the group $H^i(K, \Z/2(i-1))$ in \cite{kato} (in characteristic two, this uses the Bloch--Gabber--Kato theorem on the bijectivity of the differential symbol \cite[Theorem 9.5.2]{centralSimpleAlgebrasAndGalCohom}).

To a $k$-fold Pfister form $q = \llangle a_1, \dotsc, a_k\rrsquare$ we associate a cohomology class in $H^k(K)$ in the following way, following \cite[§16]{ElmanKarpenkoMerkurjev_AlgGeomTheoryOfQuadForms}:
In characteristic not two, we may associate to each $a_i$ its square class in $K^\times / 2$, and this group is isomorphic to $H^1(K,\Z/2)$ by the Kummer isomorphism. (Note that the Galois module $\Z/2$ is isomorphic to $\mu_2$.) To the Pfister form $q$, we associate the cup product $\alpha = (a_1) \cup \dotsb \cup (a_k)$, where $(a_i)$ is the element of $H^1(K,\Z/2)$ corresponding to $a_i$.

In characteristic two, to each $a_i$ with $i < k$ we associate its square class in $K^\times/2 \cong K_1^M(K)/2 \to H^0(K, K_1^M(K^{\mathrm{sep}})/2)$, and to $a_k$ we associate the element of $H^1(K) = H^1(K, \Z/2)$ given by the Artin-Schreier isomorphism $K / \wp(K) \cong H^1(K, \Z/2)$, see \cite[Corollary 6.1.2]{NeukirchSchmidtWingberg}. Again the cup product produces an element $\alpha \in H^k(K)$.

\begin{fact}\label{fact:pfister_forms_and_cohomology}
  Let $q = \llangle a_1, \dotsc, a_k\rrsquare$ be a $k$-fold Pfister form over a field $K$ of arbitrary characteristic and $\alpha \in H^k(K)$ its associated cohomology class.
  Then $q$ is isotropic over $K$ if and only if $\alpha$ vanishes.
\end{fact}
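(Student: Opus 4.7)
The plan is to reduce this statement to the relationship between Milnor K-theory modulo $2$ and Galois cohomology, via the intermediate object $I^k(K)/I^{k+1}(K)$ in the Witt ring filtration. The key input, which I would import from \cite{ElmanKarpenkoMerkurjev_AlgGeomTheoryOfQuadForms}, is that the assignment $\llangle a_1,\dotsc,a_k\rrsquare \mapsto (a_1)\cup\dotsb\cup(a_k)$ induces a well-defined map $e_k \colon I^k(K)/I^{k+1}(K) \to H^k(K)$.

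My first step would be to note that a Pfister form is isotropic if and only if it is hyperbolic, since Pfister forms are round; this is a standard fact (\cite[Corollary 9.10]{ElmanKarpenkoMerkurjev_AlgGeomTheoryOfQuadForms} in characteristic not two; and the corresponding assertion in \S 10 of the same source in characteristic two, where one must take care to distinguish quadratic and bilinear Pfister forms, but $1$-fold Pfister forms as defined here are the non-degenerate quadratic ones). So it suffices to show that $q$ is hyperbolic if and only if $\alpha = 0$.

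For the direction ``hyperbolic $\Rightarrow \alpha = 0$'', one uses that if $q$ is hyperbolic then its class in $I^k/I^{k+1}$ is zero, whence $\alpha = e_k([q]) = 0$ by functoriality of the construction. For the converse, one needs injectivity of $e_k$: if $\alpha = 0$, then $[q] \in I^{k+1}(K)$, and the Arason--Pfister Hauptsatz (\cite[Theorem 23.7]{ElmanKarpenkoMerkurjev_AlgGeomTheoryOfQuadForms}) implies that a $k$-fold Pfister form lying in $I^{k+1}$ must be hyperbolic.

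The main obstacle is precisely the injectivity of $e_k$, which in characteristic $\neq 2$ is a consequence of Voevodsky's resolution of the Milnor conjecture (the bijectivity of the norm residue homomorphism), and in characteristic $2$ rests on the Kato--Milne cohomology and the Bloch--Gabber--Kato theorem on the differential symbol already cited in the excerpt. Since these are deep black-box inputs, the pragmatic approach is to invoke \cite[Fact 16.2]{ElmanKarpenkoMerkurjev_AlgGeomTheoryOfQuadForms} (in characteristic not two) and the analogous statement in \S 101 (in characteristic two) directly, rather than to reproduce the proofs.
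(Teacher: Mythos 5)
Your proposal is correct and matches the paper's treatment: the paper likewise treats this as a citation to \cite[Fact 16.2]{ElmanKarpenkoMerkurjev_AlgGeomTheoryOfQuadForms}, noting that in characteristic not two it rests on the Milnor conjectures (graded Milnor $K$-theory mod $2$, graded Witt ring, and graded cohomology being isomorphic), exactly the black-box input you identify. Your extra sketch (isotropic iff hyperbolic for Pfister forms, plus Arason--Pfister and injectivity of $e_k$) is a reasonable expansion of the same route, not a different one.
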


This is stated in \cite[Fact 16.2]{ElmanKarpenkoMerkurjev_AlgGeomTheoryOfQuadForms}. In characteristic not two, it follows from the Milnor Conjectures on the graded ring of Milnor-$K$-Theory mod $2$, the graded Witt ring, and the graded $\Z/2$-cohomology ring being isomorphic; see \cite[§16]{ElmanKarpenkoMerkurjev_AlgGeomTheoryOfQuadForms} for a discussion.

As an immediate consequence, for any overfield $L/K$ the form $q$ becomes isotropic over $L$ if and only if $\alpha$ is annihilated by the restriction map $H^k(K) \to H^k(L)$.

\section{Over henselian fields}
\label{sec:over_henselian_fields}

In this section we work with valued fields $(F,v)$. We write $\mathcal{O}_v$ for the valuation ring, $\mathfrak{m}_v$ for its maximal ideal, $Fv$ for the residue field, $vF$ for the value group (always written additively), and $F_v$ for an henselisation.
Throughout, we exclude the case of mixed characteristic $(0,2)$, i.e.~we assume that either $\kar(Fv) \neq 2$ or $\kar F = \kar(Fv) = 2$.
The main objective of this section is to establish the following.

\begin{proposition}\label{prop:local_S_computation}
  Let $(F,v)$ be henselian with finite residue field and value group a lexicographic power $\Z^r$, and $c \in \mathcal{O}_v$ such that the reduction of $X^2+X+c$ is irreducible over the residue field.
  If $\kar Fv = 2$, then assume that $(F,v)$ is a henselisation of a finitely generated extension of $\F_2$ of transcendence degree $r$.
  
  Then for any anisotropic $(r+1)$-fold Pfister form $q/F$ we have \[ \mathfrak{m}_v \subseteq S_c(q/F) \subseteq \mathcal{O}_v .\]
\end{proposition}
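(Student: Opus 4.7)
The inclusion $S_c(q/F) \subseteq \mathcal{O}_v$ will follow from a direct application of Hensel's lemma: given $x \in F$ with $v(x) < 0$, we have $v(x-1) = v(x) < 0$, and substituting $X = (x-1)Y$ in $P(X) := X^2 + (1-x)X + c$ and then dividing by $(x-1)^2$ yields $Y^2 - Y + c/(x-1)^2$. The constant term has positive $v$-valuation since $v(c) \geq 0$ and $v(x-1) < 0$, so the polynomial reduces to $Y(Y-1)$, whose two simple roots lift to $\mathcal{O}_v$ by henselianity. Hence $P$ has a root in $F$, is reducible, and $x \notin S_c(q/F)$ by definition.

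For the reverse inclusion $\mathfrak{m}_v \subseteq S_c(q/F)$, fix $x \in \mathfrak{m}_v$. Since $1-x$ is a unit with reduction $1$, the reduction of $P(X)$ equals $X^2 + X + \bar c$, irreducible by hypothesis, so $P$ itself is irreducible over $F$ and $L := F[X]/(P)$ is a quadratic field extension. Because $(F,v)$ is henselian and $P \in \mathcal{O}_v[X]$ has irreducible reduction, $v$ extends uniquely to a henselian $w$ on $L$ which is unramified with residue field the unique quadratic extension $\F_{q^2}$ of $Fv = \F_q$, the value group remaining $\Z^r$. It then remains to show that $q$ becomes isotropic over $L$; by Fact \ref{fact:pfister_forms_and_cohomology}, this is equivalent to showing that the class $\alpha \in H^{r+1}(F)$ attached to $q$ vanishes under restriction to $H^{r+1}(L)$.

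I will establish the stronger statement that the full restriction map $H^{r+1}(F) \to H^{r+1}(L)$ is zero. By iterated Kato residue sequences for the rank-$r$ henselian valuation $v$, there are canonical isomorphisms $H^{r+1}(F) \cong H^1(\F_q, \Z/2) \cong \Z/2$ and $H^{r+1}(L) \cong H^1(\F_{q^2}, \Z/2) \cong \Z/2$ compatible with restriction, so everything reduces to the restriction $H^1(\F_q, \Z/2) \to H^1(\F_{q^2}, \Z/2)$, i.e., the map $\operatorname{Hom}(\hat{\Z}, \Z/2) \to \operatorname{Hom}(2\hat{\Z}, \Z/2)$ restricting characters along $2\hat{\Z} \subset \hat{\Z}$; this vanishes since every homomorphism $\hat{\Z} \to \Z/2$ kills $2\hat{\Z}$. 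Equivalently, one can argue symbolically: a generator of $H^{r+1}(F)$ can be represented as a symbol whose last entry is a unit $u \in \mathcal{O}_v^\times$ with nontrivial reduction in $\F_q^\times/\F_q^{\times 2}$ (respectively in $\F_q/\wp(\F_q)$ if $\kar F = 2$), and $\bar u$ becomes a square (respectively lies in $\wp(\F_{q^2})$) in the quadratic residue extension, so Hensel's lemma lifts this to $L$ and the symbol vanishes in $H^{r+1}(L)$.

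The main obstacle is the structural computation $H^{r+1}(F) \cong \Z/2$, especially in equicharacteristic $2$. The hypothesis that $(F,v)$ is a henselisation of a finitely generated extension of $\F_2$ of transcendence degree $r$ enters exactly here: it allows invocation of the Bloch--Gabber--Kato bijectivity of the differential symbol (cited in the excerpt) to translate the Milnor $K$-theoretic definition of $H^\bullet$ from the introduction into a group accessible by Kato-type residue computations.
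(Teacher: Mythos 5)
Your overall strategy is the same as the paper's: Hensel's Lemma shows $S_c(q/F)\subseteq\mathcal{O}_v$ (your explicit substitution argument is a fine spelled-out version of Lemma \ref{lem:henselian_S_in_valn_ring}), and for $x\in\mathfrak{m}_v$ you pass to the unramified quadratic extension $L=F[X]/(X^2+(1-x)X+c)$ and show that the whole restriction map $H^{r+1}(F)\to H^{r+1}(L)$ vanishes by transporting it, via iterated residue maps, to the restriction $H^1(Fv)\to H^1(Lw)$ of finite fields, which is zero. This is exactly Lemma \ref{lem:isotropic_over_unramified_extn}.

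The gap is in your justification of the key step, namely that the iterated Kato residue maps give isomorphisms $H^{r+1}(F)\cong H^1(Fv)$ (compatibly for $L$). In residue characteristic not two this needs the computation $\cd_2(F_iv_i)=i$ along the rank-one pieces of $v$ (the kernel of $\partial_{v_i}$ is $H^{i+1}(F_iv_i)$, so one must know it vanishes); this is standard but should be said. More seriously, in equal characteristic two your stated reason for why the hypothesis ``$(F,v)$ is a henselisation of a finitely generated extension of $\F_2$ of transcendence degree $r$'' enters is not the right one: the Bloch--Gabber--Kato theorem holds for arbitrary fields of characteristic two and is only used to identify the cohomology groups with Kato's $H^i(K,\Z/2(i-1))$; it does not give bijectivity of the residue maps. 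What the hypothesis actually buys is, first, the bound $[F_i:F_i^2]\le 2^i$ (from the transcendence degree count) needed for Kato's residue maps to be \emph{defined} at each stage, and second, that each rank-one piece $(F_i,v_i)$ is an unramified extension of the henselisation of an \emph{excellent} discrete valuation ring (Lemma \ref{lem:excellent_dvr} together with \cite[Corollary 4.1.4]{englerPrestel}), which is the hypothesis under which Kato proves $\partial_v$ is an isomorphism in characteristic two (see Proposition \ref{prop:local_cohomology}). Without verifying these two points, the claimed computation $H^{r+1}(F)\cong\Z/2$ in characteristic two — which you correctly identify as the main obstacle — remains unproven, and your alternative ``symbolic'' argument presupposes the same structure result. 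The characteristic-not-two half of your argument, and the reduction to $H^1$ of finite fields, are otherwise correct.
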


Our main tools for the proof are cohomological.
\begin{proposition}\label{prop:local_cohomology}
  Let $(F,v)$ be henselian of residue characteristic not two with $vF \cong \Z$.
  \begin{enumerate}
  \item There is a family of surjective homomorphisms $H^{m+1}(F) \to H^m(Fv)$; write $\partial_v$ for all of them.
  \item When $m = \cd_2(Fv)$, $\partial_v$ is an isomorphism. (Here $\cd_2$ stands for the \emph{$2$-cohomological dimension}.)
  \item For any finite extension $E/F$, the following diagram commutes.
    \[ \xymatrix{ H^{m+1}(F) \ar[r] \ar[d] & H^m(Fv) \ar[d]^{\cdot e} \\ H^{m+1}(E) \ar[r] & H^m(Ev) } \]
    Here the horizontal maps are given by $\partial_v$, the vertical map on the left-hand side is cohomological restriction, and the vertical map on the right-hand side is restriction followed by multiplication by the ramification index $e = (vE \colon vF)$.
  \item Let $x \in \mathcal{O}_v^\times$, so $x$ induces an element $\alpha = (x) \in H^1(F)$ and an element $\overline\alpha = (\overline x) \in H^1(Fv)$ via the Kummer map.
    Then for any $m$ the following diagram commutes, where the vertical maps are given by the cup-product with $\alpha$ and $\overline\alpha$.
    \[\xymatrix{
        H^{m+1}(F) \ar[r]^{\partial_v} \ar[d]^{\alpha \cup \cdot} & H^m(Fv) \ar[d]^{\overline\alpha \cup \cdot} \\
        H^{m+2}(F) \ar[r]^{\partial_v} & H^{m+1}(Fv)
      }\]
  \end{enumerate}

  If $(F,v)$ is henselian with $\kar F = \kar Fv = 2$ and $vF \cong \Z$, then there also exist homomorphisms $H^{m+1}(F) \to H^m(Fv)$ for those $m$ with $[Fv : (Fv)^2] < 2^m$, also denoted by $\partial_v$.
  These make the diagram (3) commute.

  If furthermore the valuation ring of $v$ is the henselisation of an excellent valuation ring, or an unramified extension thereof, then the homomorphisms $\partial_v$ are isomorphisms whenever defined.
\end{proposition}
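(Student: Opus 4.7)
The plan is to reduce everything to well-known facts about the residue map in Galois cohomology, treating the residue-characteristic-not-two case and the equal-characteristic-two case separately, as the underlying cohomology theories are quite different.

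For $\kar Fv \neq 2$ with $vF \cong \Z$, the coefficient module $\mu_2 \cong \Z/2$ is unramified, and I would appeal to the classical description due to Serre: for any uniformiser $\pi$ one has a direct sum decomposition
\[ H^{m+1}(F) = H^{m+1}_{\mathrm{un}}(F) \oplus \bigl((\pi) \cup H^m_{\mathrm{un}}(F)\bigr), \]
where $H^*_{\mathrm{un}}(F)$ is the image of inflation from the maximal unramified quotient of the absolute Galois group, canonically isomorphic to $H^*(Fv)$. The map $\partial_v$ is defined as the projection onto the second summand followed by the inverse of $(\pi) \cup \cdot$; independence of $\pi$ follows since for any unit $u$ the class $(u)$ is unramified. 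Surjectivity is then immediate. Part (2) is immediate from the decomposition: when $m = \cd_2(Fv)$, the first summand $H^{m+1}(Fv)$ vanishes, so $\partial_v$ is injective as well.

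Parts (3) and (4) are compatibilities I would check by computing on cup-product symbols, which suffices by the norm residue (Milnor--Bloch--Kato) theorem at the prime $2$. For (3), choose uniformisers $\pi_F$ of $v$ on $F$ and $\pi_E$ on $E$ with $\pi_F = u\, \pi_E^e$ for some unit $u$; applying restriction to a class of the form $(\pi_F) \cup \tilde\beta$ with $\tilde\beta$ unramified gives $e \cdot (\pi_E) \cup \tilde\beta' + (u) \cup \tilde\beta'$, and the second summand is unramified, leaving $e \cdot \overline\beta'$ after $\partial_v$, as required. For (4), the Leibniz-type formula for $\partial_v$ combined with $(x)$ being unramified gives $\partial_v(\alpha \cup \xi) = \overline\alpha \cup \partial_v(\xi)$; no sign appears because we are working modulo $2$. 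These are spelled out in \cite[Chapters 6--7]{centralSimpleAlgebrasAndGalCohom}.

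The main obstacle is the equal characteristic two case, where $H^m(F) = H^1(F, K_{m-1}^M(F^{\mathrm{sep}})/2)$ and Kummer theory is unavailable. Here I would follow Kato's approach in \cite{kato}: there is a residue on Milnor $K$-theory modulo $2$ given on symbols by
\[ \{\pi, u_1, \dotsc, u_{m-1}\} \longmapsto \{\overline u_1, \dotsc, \overline u_{m-1}\}, \qquad \{u_0, \dotsc, u_{m-1}\} \longmapsto 0, \]
which globalises to a short exact sequence of $\mathrm{Gal}(F^{\mathrm{sep}}/F)$-modules (relating $K_{m-1}^M(F^{\mathrm{sep}})/2$ to the corresponding object over $Fv$), and the associated long exact sequence in $H^1$ yields $\partial_v$. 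The hypothesis $[Fv:(Fv)^2] < 2^m$ is exactly Kato's condition ensuring that this short exact sequence exists and is compatible with base change. Commutativity of diagram (3) in this setting reduces to the analogous compatibility of the Milnor K-theoretic residue with finite extensions. For the final assertion that $\partial_v$ is an isomorphism when $\mathcal{O}_v$ is a henselisation of an excellent discrete valuation ring, or an unramified extension thereof, I would appeal to Kato's vanishing theorems for the cohomology of such henselian local rings in the logarithmic differential formalism of \cite[§101]{ElmanKarpenkoMerkurjev_AlgGeomTheoryOfQuadForms}. The hardest technical point is matching the conventions: the paper's $H^i$ in characteristic two is defined with the top slot handled via the Artin--Schreier isomorphism, and this needs to be reconciled with Kato's purely $K$-theoretic residue description before the compatibilities make sense.
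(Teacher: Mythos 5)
Your residue-characteristic-not-two part is correct and is essentially the paper's argument in a more hands-on form: where the paper cites the explicit construction and its properties from \cite{centralSimpleAlgebrasAndGalCohom} (Construction 6.8.5, Corollary 6.8.8, Lemma 6.8.4, Proposition 7.5.1, noting that the statements there are for complete fields but the proofs only use henselianity), you rederive (1)--(4) from the uniformiser-dependent splitting $H^{m+1}(F)\cong H^{m+1}(Fv)\oplus\bigl((\pi)\cup H^{m}(Fv)\bigr)$. That works, and in fact once you have the splitting you do not need the norm residue theorem at all: both sides of (3) and (4) are group homomorphisms and every class is already a sum of an unramified class and one of the form $(\pi)\cup\tilde\beta$, so checking on these two types suffices.

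The gap is in the characteristic-two half, specifically the final assertion. Your citation is misdirected: \cite[\S 101]{ElmanKarpenkoMerkurjev_AlgGeomTheoryOfQuadForms} only sets up the groups $H^{i}$ and contains no vanishing or comparison theorem for henselisations of excellent rings; the statement you need is \cite[Lemma 1.4(3)]{kato}, where excellence enters because it permits comparison with the completion, over which the actual computation is made. More importantly, you do not address the clause ``or an unramified extension thereof'' at all, and this is not decorative: it is exactly the case used later (Lemma \ref{lem:isotropic_over_unramified_extn} applies the isomorphism along towers of unramified/composite extensions of henselisations). The paper's proof supplies the missing step: a finite unramified extension of the henselisation of an excellent valuation ring is again of this type because excellence is preserved under finite extensions, and arbitrary unramified extensions are then reached from the finite ones via the compatibility diagram (3) (a filtered-colimit argument, since ramification indices are $1$ there). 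Without an argument of this kind your proof establishes the isomorphism only for the henselisation itself, which is strictly weaker than the proposition as stated. Your description of Kato's construction of $\partial_v$ in equal characteristic two is also vaguer than what \cite{kato} actually does, but since you, like the paper, defer to that source for the construction and for the role of the hypothesis $[Fv:(Fv)^2]<2^{m}$, that part is acceptable.
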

The additional requirement that the valuation ring be excellent in characteristic two forces us to make the additional assumptions in Proposition \ref{prop:local_S_computation}.
\begin{proof}
  In the case of $\kar Fv \neq 2$, the maps are constructed in \cite[§1]{kato}, but it is useful to have more explicit descriptions, which can be found in many places in the literature.
  An explicit construction of the maps $\partial_v$ is for instance given in \cite[Construction 6.8.5]{centralSimpleAlgebrasAndGalCohom} (there only for complete discretely valued fields, but inspection of the proofs shows that henselianity is sufficient).
  This construction agrees with the one given in \cite{kato} by \cite[Proposition 6.8.2, Remark 6.8.3]{centralSimpleAlgebrasAndGalCohom}.
  
  By \cite[Corollary 6.8.8]{centralSimpleAlgebrasAndGalCohom}, the kernel of $\partial_v \colon H^{m+1}(F) \to H^m(Fv)$ is isomorphic to $H^{m+1}(Fv)$, which is trivial if $\cd_2(Fv) \leq m$.
  Compatibility with finite field extensions (3) can be read off from the construction, but also follows from \cite[Remark 7.1.6(2), Proposition 7.5.1]{centralSimpleAlgebrasAndGalCohom}.
  The commutative diagram from (4) follows from \cite[Lemma 6.8.4]{centralSimpleAlgebrasAndGalCohom}.

  For $\kar F = \kar Fv = 2$, the construction of the maps $\partial_v$ is again given in \cite[§1]{kato}, where the hypothesis $[Fv : (Fv)^2] < 2^m$ is needed.
  The commutativity of diagram (3) can be read off from the construction.
  It is given in \cite[Lemma 1.4(3)]{kato} that the maps $\partial_v$ are isomorphisms when defined under the additional hypothesis that $\mathcal{O}_v$ is the henselisation of an excellent ring.
  The same applies to finite unramified extensions of such rings, since passing to a finite extension preserves excellency. By the commutative diagram (3), we may pass to arbitrary unramified extensions.
\end{proof}

\begin{remark}
  In the case $\kar K \neq 2$, it is in fact unnecessary to restrict to value groups isomorphic to $\Z$; for any value group $vF$ with $r = \dim_{\F_2}vF/2vF$ finite one can construct canonical residue maps $H^{m+r}(F,\Z/2) \to H^m(Fv,\Z/2)$ satisfying all of the properties above.
  This may be deduced from \cite[Theorem 3.6, Remark 3.12]{WadsworthPHenselianFieldsKTheoryGalCohomGWittRings}.
\end{remark}

\begin{lemma}\label{lem:excellent_dvr}
  Let $(F,v)$ be a valued field with $F$ finitely generated of transcendence degree $r$ over $\F_2$ and $Fv$ of transcendence degree $r-1$ over $\F_2$. Then the valuation ring of $v$ is excellent.
\end{lemma}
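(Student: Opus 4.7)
The plan is to exhibit $\mathcal{O}_v$ as a localization of a finitely generated $\F_2$-algebra; excellence then follows from the standard facts that $\F_2$ is excellent, that excellence ascends along finite-type maps, and that it is inherited by localizations.

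I would first choose a transcendence basis $\bar y_1, \dotsc, \bar y_{r-1}$ of $Fv$ over $\F_2$, lift it to $y_1, \dotsc, y_{r-1} \in \mathcal{O}_v$, and adjoin a finite set of field generators of $F/\F_2$ (replacing any generator $t \notin \mathcal{O}_v$ by $t^{-1}$). Let $A \subseteq \mathcal{O}_v$ be the $\F_2$-subalgebra they generate, so $\mathrm{Frac}(A) = F$ and $\dim A = r$. Setting $\mathfrak{p} = \mathfrak{m}_v \cap A$, the injection $A/\mathfrak{p} \hookrightarrow Fv$ contains the $\bar y_i$ in its image, giving $\mathrm{trdeg}_{\F_2} \kappa(\mathfrak{p}) = r-1$; the dimension formula for finitely generated integral $\F_2$-algebras then forces $\mathrm{ht}(\mathfrak{p}) = 1$. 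Passing to the normalization $\tilde A$ of $A$ in $F$, which is finite over $A$ by the standard finiteness of normalization for finitely generated domains over a field, the prime $\mathfrak{q} = \mathfrak{m}_v \cap \tilde A$ lies over $\mathfrak{p}$, hence has height one, making $\tilde A_\mathfrak{q}$ a DVR with fraction field $F$.

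Since $\tilde A_\mathfrak{q} \subseteq \mathcal{O}_v$ are two valuation rings of $F$ with the smaller one a DVR, the classification of overrings of a valuation ring as localizations at primes leaves only $\mathcal{O}_v = \tilde A_\mathfrak{q}$ or $\mathcal{O}_v = F$; the latter is ruled out by $\mathrm{trdeg}_{\F_2} Fv < \mathrm{trdeg}_{\F_2} F$, so $\mathcal{O}_v = \tilde A_\mathfrak{q}$ is a localization of a finitely generated $\F_2$-algebra, as desired. The main obstacle in the argument is arranging $\mathrm{ht}(\mathfrak{p}) = 1$, which is precisely where the residue transcendence-degree hypothesis enters, via the inclusion of the lifted basis $y_1, \dotsc, y_{r-1}$ in $A$; one can alternatively phrase the conclusion geometrically by observing that $v$ is an Abhyankar valuation of maximal residue transcendence degree, hence divisorial on a normal integral model of $F$, so $\mathcal{O}_v$ is the local ring of a codimension-one point.
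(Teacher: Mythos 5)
Your argument is correct, and in substance it follows the same route as the paper: both proofs reduce excellence of $\mathcal{O}_v$ to the fact that it is (a localization of) a finite-type $\F_2$-algebra, i.e.\ that the hypothesis on residue transcendence degree forces $v$ to be divisorial, and then invoke the standard permanence properties of excellence (fields are excellent; excellence passes to finite-type algebras and localizations). The only real difference is where the divisoriality comes from: the paper cites a theorem of Liu to the effect that, after a birational modification of a projective model, $\mathcal{O}_v$ is the local ring of a codimension-one point, whereas you prove this input directly --- lifting a transcendence basis of $Fv$ into a finitely generated subalgebra $A\subseteq\mathcal{O}_v$ with fraction field $F$, using the dimension formula for affine domains to get $\mathrm{ht}(\mathfrak{m}_v\cap A)=1$, normalizing (finiteness of normalization for affine domains), and then using that a DVR admits no proper overrings other than the fraction field to conclude $\mathcal{O}_v=\tilde A_{\mathfrak q}$. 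Your version is self-contained commutative algebra at the cost of a bit more work; the paper's is shorter but leans on the citation. Your closing remark about Abhyankar/divisorial valuations is exactly the paper's formulation. No gaps.
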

\begin{proof}
  The field $F$ is the function field of some integral projective variety over $\F_2$, see \cite[Proposition 2.2.13]{poonenRationalPoints}. By \cite[Theorem 3.26(b)]{LiuAlgGeomAndArithCurves}, after replacing the variety by a birational one if necessary, the valuation ring of $v$ is the local ring of a point of codimension $1$ on the variety, and hence excellent by \cite[Corollary 2.40]{LiuAlgGeomAndArithCurves}.
\end{proof}

\begin{lemma}\label{lem:isotropic_over_unramified_extn}
  Let $(F,v)$ be henselian with finite residue field and value group a lexicographic power $\Z^r$, and $E/F$ the unique unramified quadratic extension.
  If $\kar Fv = 2$, assume that $(F,v)$ is a henselisation of a finitely generated extension of $\F_2$ of transcendence degree $r$.
  Then every $(r+1)$-fold Pfister form over $F$ becomes isotropic over $E$.
\end{lemma}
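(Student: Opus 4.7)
The plan is to translate the claim via Fact~\ref{fact:pfister_forms_and_cohomology} into showing that the cohomology class $\alpha \in H^{r+1}(F)$ associated to $q$ restricts to zero in $H^{r+1}(E)$. Since $vF \cong \Z^r$ lexicographically, I decompose $v$ as the composition $v = v_1 \circ \cdots \circ v_r$ of rank-one henselian valuations, where $v_i$ acts on the successive residue field $F_{i-1}$ (with $F_0 = F$) and $F_r = Fv = k$ is the given finite field; henselianity of each $v_i$ on $F_{i-1}$ follows from the standard fact that coarsenings and induced valuations on residue fields of henselian valuations are henselian. Because $E/F$ is unramified, the induced extensions $E_i / F_i$ are unramified of degree $2$ at each level, with $E_r$ the unique quadratic extension $k'$ of $k$.

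Iterating Proposition~\ref{prop:local_cohomology} produces composed residue maps $H^{r+1}(F) \to H^1(k)$ and $H^{r+1}(E) \to H^1(k')$. By part~(3) of that proposition, with all ramification indices equal to $1$ since $E/F$ is unramified, these assemble into a commutative square whose other two sides are the cohomological restriction maps $H^{r+1}(F) \to H^{r+1}(E)$ and $H^1(k) \to H^1(k')$. The core verification is that each individual $\partial_{v_i}$ is an isomorphism. In residue characteristic different from $2$, surjectivity is Proposition~\ref{prop:local_cohomology}(1), and the kernel identified in the proof of that proposition, namely $H^{r-i+2}(F_i)$, vanishes because $F_i$ is henselian of rank $r-i$ with finite residue field, so $\cd_2(F_i) \leq r-i+1$. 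In residue characteristic $2$, I first check that $[F_i : F_i^2] = [k : k^2] \cdot [\Z^{r-i} : 2\Z^{r-i}] = 2^{r-i}$, so the residue maps are defined; and that the valuation ring of each $v_i$ is a henselisation of an excellent discrete valuation ring. The latter is where the hypothesis on $(F,v)$ intervenes: by Abhyankar's inequality applied at each step, each $F_i$ is the henselisation of a finitely generated extension $F_i^0$ of $\F_2$ of transcendence degree $r - i$, and Lemma~\ref{lem:excellent_dvr} applies to the rank-one valuation induced by $v_{i+1}$ on $F_i^0$, so Proposition~\ref{prop:local_cohomology} again gives an isomorphism.

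The final step is to show the restriction map $H^1(k) \to H^1(k')$ vanishes. Interpreting $H^1$ of a field with $\Z/2$-coefficients as continuous characters of the absolute Galois group, the unique nontrivial character of $\mathrm{Gal}(\overline{k}/k)$ has kernel equal to $\mathrm{Gal}(\overline{k}/k')$ (the unique subgroup of index $2$), so its restriction to $\mathrm{Gal}(\overline{k}/k')$ is trivial. Combined with the commutative square, this forces $\alpha$ to die in $H^{r+1}(E)$, and Fact~\ref{fact:pfister_forms_and_cohomology} then yields isotropy of $q$ over $E$. The main obstacle I expect is the bookkeeping in characteristic $2$ required to verify that the iterated residue maps remain isomorphisms, which depends on tracking excellence of each successive valuation ring through the decomposition; this is precisely why Proposition~\ref{prop:local_S_computation} carries the extra hypothesis that $(F,v)$ is a henselisation of a finitely generated extension of $\F_2$.
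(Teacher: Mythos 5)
Your proposal is correct and follows essentially the same route as the paper: decompose $v$ into rank-one pieces, iterate the residue maps of Proposition \ref{prop:local_cohomology} to get an isomorphism onto $H^1$ of the finite residue field, use compatibility (3) with the unramified quadratic extension, and conclude from the vanishing of $H^1(Fv)\to H^1(Ev)$. The only place the paper is slightly more careful is the characteristic-two descent step: each $(F_i,v_{i+1})$ is in general only an \emph{unramified extension} of the henselisation of the finitely generated discretely valued field (this is why Proposition \ref{prop:local_cohomology} includes the ``unramified extension thereof'' clause, cited via \cite[Corollary 4.1.4]{englerPrestel}), and for the definedness of the residue maps only the upper bound $[F_i:F_i^2]\leq 2^{r-i}$, coming from the transcendence degree, is needed rather than the exact product formula you assert.
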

\begin{proof}
  Since $v$ has value group $\Z^r$, we may write $v$ as an iterated composition of valuations $v_1 \circ \dotsb \circ v_r$ (in the sense of \cite[p.~45]{englerPrestel}), where each $v_i$ has value group $\Z$; this means that we have valued fields $(F_i, v_i)$ with $F_r = F$, $F_i = F_{i+1} v_{i+1}$ for $i < r$, and $F_1v_1 = Fv$.
  We shall argue that the conditions from Proposition \ref{prop:local_cohomology} are satisfied.
  
  If $\kar Fv \neq 2$, then since the residue field $Fv = F_1v_1$ is finite and hence of $2$-cohomological dimension $1$, and for each $i$ we have $\cd_2(F_i) = \cd_2(F_iv_i) + 1$ by \cite[II.4.3, Proposition 12]{cohomologieGaloisienne} (there stated for complete valued fields, but inspection of the proof shows that henselianity suffices), we have $\cd_2(F_iv_i) = i$ and $\cd_2(F) = r+1$.
  If $\kar F = \kar Fv = 2$, then because each $F_i$ has strictly higher transcendence degree over the prime field than $F_i v_i$, we see that each $F_i$ must have transcendence degree precisely $i$ over $\F_2$, and in particular $[F_i : F_i^2] \leq 2^i$. 
  By the condition that $(F,v)$ is a henselisation of a finitely generated extension of $\F_2$ of transcendence degree $r$, we see by \cite[Corollary 4.1.4]{englerPrestel} that each $(F_i, v_i)$ is in fact an unramified extension of a henselisation of a discretely valued field to which Lemma \ref{lem:excellent_dvr} is applicable.

  Irrespective of the characteristics, at each step we therefore have the isomorphism $\partial_{v_i} \colon H^{i+1}(F_i) \to H^i(F_iv_i)$ from above.
  Composition gives an isomorphism $\partial_v \colon H^{r+1}(F) \to H^1(Fv)$. The same construction applies to the unramified extension $E/F$.
  The diagram \[ \xymatrix{ H^{r+1}(F) \ar[r] \ar[d] & H^1(Fv) \ar[d] \\ H^{r+1}(E) \ar[r] & H^1(Ev) } \]
  commutes by Proposition \ref{prop:local_cohomology}(3), where the horizontal maps are given by $\partial_v$ and the vertical maps are restrictions.
  Since the restriction map $H^1(Fv) \to H^1(Ev)$ is the zero map as $Ev$ is the only quadratic extension of $Fv$, we deduce that the restriction map $H^{r+1}(F) \to H^{r+1}(E)$ is also the zero map.
  Hence every $(r+1)$-fold Pfister form over $F$ becomes isotropic over $E$.
\end{proof}

\begin{lemma}\label{lem:henselian_S_in_valn_ring}
  Let $(F,v)$ be a henselian field.
  If $q/F$ is an anisotropic Pfister form and $c \in \mathcal{O}_v$, then $S_c(q/F) \subseteq \mathcal{O}_v$.
\end{lemma}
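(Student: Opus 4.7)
The plan is to argue by contradiction: assume $x \in S_c(q/F)$ with $v(x) < 0$, and exhibit a root of the polynomial $p(X) := X^2 + (1-x)X + c$ in $F$, contradicting the irreducibility clause in Definition \ref{defn:S} (which applies because $q$ is assumed anisotropic).

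Since $v(x) < 0$ forces $v(1-x) = v(x) < 0$, the substitution $X = (1-x)Y$ turns $p(X)$ into
\[ (1-x)^2 \bigl( Y^2 + Y + c/(1-x)^2 \bigr). \]
The constant term has valuation $v(c) - 2v(1-x) > 0$ because $v(c) \geq 0$, so it lies in $\mathfrak{m}_v$. Reducing the bracketed polynomial modulo $\mathfrak{m}_v$ yields $Y^2 + Y$, whose derivative at $Y = 0$ equals $1 \in \mathcal{O}_v^\times$ irrespective of $\kar F$. Henselianity of $(F,v)$ then lifts $Y = 0$ to a genuine root of $Y^2 + Y + c/(1-x)^2$ in $\mathcal{O}_v$, and multiplying by $1-x$ gives the sought root of $p$ in $F$.

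I anticipate no substantive obstacle here; the lemma is essentially a one-line Hensel computation once the right change of variable is made. The only small point worth care is making the argument uniform across equal characteristic two and residue characteristic not two, which is why I single out $Y = 0$ as the simple root of $Y^2 + Y$ via its derivative rather than by a direct linear factorisation.
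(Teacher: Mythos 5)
Your argument is correct and is exactly the paper's proof, which simply states that for $v(x)<0$ the polynomial $X^2+(1-x)X+c$ is reducible over $F$ by Hensel's Lemma and hence $x \notin S_c(q/F)$; your substitution $X=(1-x)Y$ and the observation that $Y=0$ is a simple root of the reduction $Y^2+Y$ in every characteristic just make that one-line appeal to Hensel explicit.
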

\begin{proof}
  If $x \in F$ with $vx < 0$, then the polynomial $X^2+(1-x)X+c$ is reducible in $F$ by Hensel's Lemma, and hence $x \not\in S_c(q/F)$.
\end{proof}

\begin{proof}[Proof of Proposition \ref{prop:local_S_computation}]
  The inclusion $S_c(q/F) \subseteq \mathcal{O}_v$ is immediate from Lemma \ref{lem:henselian_S_in_valn_ring}.

  For the other inclusion, let $x \in \mathfrak{m}_v$. The polynomial $X^2+(1-x)X+c$ reduces to an irreducible polynomial over $Fv$ by assumption, and therefore $E:= F[X]/(X^2+(1-x)X+c)$ is an unramified extension field of $F$.
  By Lemma \ref{lem:isotropic_over_unramified_extn}, the Pfister form $q$ becomes isotropic over $E$, and hence $x \in S_c(q/F)$.
\end{proof}

For later use we give a few easy facts.
\begin{lemma}\label{lem:anisotropic_over_dvf}
  Let $(F,v)$ be a valued field with $vF \cong \Z$ and uniformiser $\pi$. If $\kar Fv = 2$, assume that $\kar F = 2$.
  Let $a_1, \dotsc, a_k \in \mathcal{O}_v^\times$ such that the Pfister form $\llangle \overline a_1, \dotsc, \overline a_k\rrsquare/Fv$ given by the residues $\overline a_i$ of the $a_i$ under the valuation $v$ is anisotropic.
  Then $\llangle \pi, a_1, \dotsc, a_k\rrsquare$ is anisotropic over $F$.
\end{lemma}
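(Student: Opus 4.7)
The plan is to argue by contradiction: assume $\llangle \pi, a_1, \dotsc, a_k\rrsquare$ is isotropic over $F$ and derive a contradiction with the anisotropy of the reduction $\overline\phi := \llangle \overline a_1, \dotsc, \overline a_k\rrsquare$ over $Fv$. The first step is to unwind the inductive definition: one immediately sees $\llangle \pi, a_1, \dotsc, a_k\rrsquare \cong \phi \perp (-\pi)\phi$, where $\phi := \llangle a_1, \dotsc, a_k\rrsquare$, so an isotropic vector supplies $u, w \in F^{2^k}$, not both zero, with $\phi(u) = \pi \phi(w)$ (the sign is irrelevant in characteristic two).

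The next step is to establish that $\phi$ has good reduction: a straightforward induction on $k$, starting from the explicit formula for $\llangle a_1\rrsquare$ and applying the inductive clause, shows that $\phi$ is a homogeneous quadratic polynomial in $2^k$ variables whose coefficients all lie in $\mathcal{O}_v^\times$ and whose reduction modulo $\mathfrak{m}_v$ is precisely $\overline\phi$. Using that $vF = \Z v(\pi)$, I then rescale $(u,w)$ by a common power of $\pi$---which preserves the relation $\phi(u) = \pi\phi(w)$---to arrange that $u, w \in \mathcal{O}_v^{2^k}$ with at least one combined entry being a unit. A two-case analysis finishes the proof. If $\overline u \neq 0$, then $\phi(w) \in \mathcal{O}_v$ gives $v(\phi(u)) = v(\pi\phi(w)) \geq 1$, hence $\overline\phi(\overline u) = 0$ with $\overline u \neq 0$, contradicting anisotropy. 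If instead $\overline u = 0$, then $\overline w \neq 0$ and anisotropy of $\overline\phi$ forces $\phi(w) \in \mathcal{O}_v^\times$, giving $v(\phi(u)) = 1$, which contradicts the fact that $u \in \mathfrak{m}_v^{2^k}$ forces $\phi(u) \in \mathfrak{m}_v^2$.

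The only genuinely subtle point I anticipate is the characteristic two case, where a $1$-fold Pfister form is $x^2 + xy + ay^2$ rather than $x^2 - ay^2$ and the scalar $-a_1$ in the inductive clause coincides with $a_1$. One must verify that with this revised definition the coefficients of $\phi$ still all lie in $\mathcal{O}_v^\times$ and that the mod-$\mathfrak{m}_v$ reduction agrees with the Pfister form built from the $\overline a_i$. Both assertions are routine once the recursion is written out, and after that the valuation bookkeeping proceeds uniformly across characteristics. The hypothesis $\kar F = 2$ when $\kar Fv = 2$ is invoked only to exclude mixed characteristic, ensuring that ``Pfister form'' over $F$ and over $Fv$ refer to objects of the same type.
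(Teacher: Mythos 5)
Your proof is correct and follows essentially the same route as the paper: decompose $\llangle \pi, a_1,\dotsc,a_k\rrsquare$ as $q \perp (-\pi)q$ with $q = \llangle a_1,\dotsc,a_k\rrsquare$, and use that anisotropy of the residue form forces $v(q(x))$ to equal twice the minimal valuation of the entries for integral primitive vectors. The paper packages this as a parity-of-valuation argument (after separate scaling, $q(x)$ has even and $\pi q(y)$ odd valuation), while you scale jointly and run a two-case reduction argument; these are interchangeable formulations of the same idea.
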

\begin{proof}
  Write $q = \llangle a_1, \dotsc, a_k\rrsquare$. If $\llangle \pi, a_1, \dotsc, a_k\rrsquare = q \bot (-\pi)q$ is isotropic over $F$, then there exist tuples $x, y \in F^{2^k}$ with $q(x) = \pi q(y)$.
  If all entries of $x$ are in $\mathcal{O}_v$, but not all in the maximal ideal, then $q(x)$ has valuation zero by anisotropy of the residue form.
  By scaling if necessary, we see that $q(x)$ always has even valuation unless $x$ is the zero vector, and by the same argument $\pi q(y)$ always has odd valuation unless $y$ is the zero vector.
  Hence the equality $q(x) = \pi q(y)$ has no non-trivial solution.
\end{proof}

\begin{lemma}\label{lem:isotropic_in_henselisation_char_not_two}
  Let $(F,v)$ be a valued field of residue characteristic not two and $q/F$ a $k$-fold Pfister form presented as $\sum_{i=1}^{2^k} b_i X_i^2$ with $b_i \in F^\times$.
  Then $q$ is isotropic over the henselisation $F_v$ if and only if there exist $x_1, \dotsc, x_{2^k} \in F$ not all zero with $v(q(x_1, \dotsc,x_{2^k})) > \min_i v(b_ix_i^2)$.
\end{lemma}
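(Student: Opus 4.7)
The plan is to prove both implications separately, using two standard facts about the henselisation $F_v/F$ (of residue characteristic not two): the extension $F_v/F$ is immediate, so $F$ is dense in $F_v$ in the $v$-adic topology; and by Hensel's lemma applied to $T^2-(1+u)$ for $u \in \mathfrak{m}_v$, every element of $1+\mathfrak{m}_v$ is a square in $F_v^\times$.

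For the ``if'' direction, suppose $x \in F^{2^k}$ is non-trivial with $v(q(x)) > \mu := \min_i v(b_i x_i^2)$. I would pick an index $i_0$ with $x_{i_0} \neq 0$ attaining the minimum, which exists because $\mu < \infty$. The strategy is to perturb only the $i_0$-th coordinate inside $F_v$: setting $s := b_{i_0}x_{i_0}^2 - q(x)$, I want to realise $s/b_{i_0}$ as a square in $F_v$. Writing $s/b_{i_0} = x_{i_0}^2 \bigl(1 - q(x)/(b_{i_0}x_{i_0}^2)\bigr)$, the correcting factor lies in $1 + \mathfrak{m}_v$ by the hypothesis $v(q(x))>v(b_{i_0}x_{i_0}^2)$, so by the above Hensel fact there exists $y_{i_0} \in F_v^\times$ with $y_{i_0}^2 = s/b_{i_0}$. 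Replacing $x_{i_0}$ by $y_{i_0}$ yields a non-trivial zero of $q$ over $F_v$.

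For the converse, suppose $y = (y_1, \dotsc, y_{2^k}) \in F_v^{2^k}$ is a non-trivial zero of $q$ and set $\mu := \min_i v(b_i y_i^2) < \infty$. Using density, for each $i$ with $y_i \neq 0$ I would pick $x_i \in F$ with $v(x_i - y_i) \geq N$, where $N$ is a large value to be fixed, and set $x_i = 0$ when $y_i = 0$. Once $N$ exceeds $\max_{y_i\neq 0} v(y_i)$, one has $v(x_i) = v(y_i)$ and, since the residue characteristic is not two, $v(x_i + y_i) = v(2 y_i) = v(y_i)$. Expanding
\[ q(x) = q(x) - q(y) = \sum_{i : y_i \neq 0} b_i (x_i + y_i)(x_i - y_i), \]
each summand has valuation at least $v(b_i) + v(y_i) + N$, and choosing $N$ additionally larger than $\mu - \min_i(v(b_i) + v(y_i))$ forces $v(q(x)) > \mu = \min_i v(b_i x_i^2)$.

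I do not expect any serious obstacle. The mild subtleties are ensuring the modified tuple in the ``if'' direction is non-trivial, which follows from $\mu < \infty$ forcing $v(y_{i_0})=v(x_{i_0})$ to be finite, and keeping the approximation in the converse close enough that the minimum $\mu$ is preserved and the cross-term valuations behave as expected—both handled by taking $N$ sufficiently large.
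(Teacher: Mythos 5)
Your ``if'' direction is correct and coincides with the paper's argument: one perturbs the coordinate attaining the minimum, using Hensel's Lemma to extract the square root of an element of $1+\mathfrak{m}_v$ in $F_v$ (residue characteristic not two), and the resulting zero is non-trivial because the correcting factor is a unit.

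The converse direction has a genuine gap: you invoke that $F$ is dense in $F_v$ because $F_v/F$ is immediate, but immediate extensions need not be dense, and the henselisation is in general \emph{not} dense in the valuation topology once $v$ has rank greater than one. For instance, let $F=k(s,t)$ with $\kar k\neq 2$ and let $v$ be the rank-two valuation obtained by composing the $s$-adic valuation (residue field $k(t)$) with the $t$-adic valuation on $k(t)$, so that $vF\cong\Z\times\Z$ lexicographically. Then $\alpha=\sqrt{1+t}$ lies in $F_v$, but no $a\in F$ satisfies $v(\alpha-a)\geq(1,0)$: such an $a$ would have the same residue as $\alpha$ under the rank-one coarsening, yet that residue is a square root of $1+t$, which is not in $k(t)$. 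This matters here because the lemma is applied in the paper to valuations of arbitrary archimedean rank (e.g.\ in the compactness argument of Section 5), and because the uniform precision $N>\mu-\min_i\bigl(v(b_i)+v(y_i)\bigr)$ you demand can lie beyond a proper convex subgroup of the value group and hence be unattainable for the given zero. The repair is the paper's: use only the immediate-extension property, coordinatewise and multiplicatively. For each $i$ with $y_i\neq 0$ choose $x_i\in F$ with $v(x_i/y_i-1)>0$, i.e.\ $v(x_i-y_i)>v(y_i)=v(x_i)$ (possible since $F$ and $F_v$ have the same value group and residue field), and set $x_i=0$ otherwise; then for every such $i$ one has $v\bigl(b_i(x_i^2-y_i^2)\bigr)>v(b_ix_i^2)\geq\min_j v(b_jx_j^2)$, and since $q(x)=q(x)-q(y)=\sum_i b_i(x_i^2-y_i^2)$, the desired strict inequality follows with no uniform choice of $N$ at all.
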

\begin{proof}
  Assume first that $q$ is isotropic over the henselisation $F_v$, so there are $x_1, \dotsc, x_{2^k} \in F_v$, not all zero, with $q(x_1, \dotsc, x_{2^k}) = 0$.
  For each $i$, choose $x_i' \in F$ by setting $x_i' = 0$ if $x_i = 0$, and ensuring $v(x_i/x_i' -1) > 0$ otherwise; this is possible since $F_v$ has the same value group and residue field as $F$. This gives $v(x_i^2-x_i'^2) > v(x_i'^2)$ if $x_i \neq 0$.
  Then \[v(q(x_1', \dotsc, x_{2^k}')) = v(\sum_i b_i(x_i'^2-x_i^2))> \min_i v(b_i x_i'^2) \] as desired.

  For the converse direction, let $x_1, \dotsc, x_{2^k} \in F$ not all zero with $v(q(x_1, \dotsc, x_{2^k})) > \min_i v(b_ix_i^2)$.
  Assume the minimum on the right-hand side is attained at index $j$.
  Set $y = -(b_j x_j^2)^{-1} \sum_{i \neq j} b_i x_i^2$.
  We obtain $v(y-1) > 0$, so $y$ is a square in $F_v$ by Hensel's Lemma, say $y = z^2$.
  Then $\sum_{i \neq j} b_ix_i^2 + b_j (x_jz)^2 = 0$, so $q$ is isotropic over $F_v$.
\end{proof}

\begin{lemma}\label{lem:isotropic_in_henselisation_char_two}
  Let $(F,v)$ be a valued with $\kar F = 2$ and $q = \llangle a_1, \dotsc, a_{k-1}, a\rrsquare$ a $k$-fold Pfister form over $F$ presented as $\sum_{i=1}^{2^{k-1}} b_i (X_i^2 + X_i Y_i + a Y_i^2)$.
  Assume that $v(a) \geq 0$.
  Then $q$ is isotropic over the henselisation $F_v$ if and only if there exist $x_1, \dotsc, x_{2^{k-1}}, y_1, \dotsc, y_{2^{k-1}} \in F$ not all zero with $v(q(x_1, \dotsc, x_{2^{k-1}}, y_1, \dotsc, y_{2^{k-1}})) > \min \{ v(b_i x_i^2), v(b_i x_i y_i), v(b_i a y_i^2) \colon i \geq 1 \}$.
\end{lemma}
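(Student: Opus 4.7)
Plan. I would follow the structure of the proof of Lemma \ref{lem:isotropic_in_henselisation_char_not_two}, replacing Hensel's Lemma for the polynomial $T^2 - c$ by Hensel applied to an Artin--Schreier-type polynomial of the shape $aT^2 + T + c$. The key point is that the derivative of $aT^2 + T$ equals the unit $1$ in characteristic two, so the Hensel condition $v(P(u)) > 2 v(P'(u))$ reduces simply to $v(P(u)) > 0$, which is exactly what the hypothesis will provide.

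For the forward direction, I take a non-trivial zero $(X_i, Y_i)_i$ of $q$ over $F_v$ and approximate each entry by elements $x_i', y_i' \in F$ with prescribed high $v$-precision (setting $x_i' = 0$ when $X_i = 0$, and similarly for $y_i'$); this is possible because $F_v$ shares value group and residue field with $F$. Using the characteristic-two identity $(X + \varepsilon)^2 = X^2 + \varepsilon^2$ and the bound $v(x_i' y_i' - X_i Y_i) \geq \min\{v(x_i') + v(y_i' - Y_i),\, v(Y_i) + v(x_i' - X_i)\}$ on the cross term, the value $v(q(x', y'))$ can be made arbitrarily large. It will in particular exceed $\min\{v(b_i x_i'^2), v(b_i x_i' y_i'), v(b_i a y_i'^2)\}$, which for sufficiently close approximations coincides with the analogous minimum for $(X, Y)$ and so is finite.

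For the reverse direction, let $m$ denote the minimum on the right-hand side and pick an index $j$ and term at which it is attained. If the term is $b_j x_j^2$, then $x_j \neq 0$; set $u = y_j/x_j$ and $R = (b_j x_j^2)^{-1} \sum_{i \neq j} b_i(x_i^2 + x_i y_i + a y_i^2) \in F$. The hypothesis reads $v(a u^2 + u + 1 + R) > 0$, so Hensel's Lemma applied to $P(T) = aT^2 + T + (1+R)$ produces $u' \in F_v$ with $P(u') = 0$. Replacing $y_j$ by $u' x_j$ and keeping the other coordinates gives, by a direct calculation, $q(x_1, \ldots, y_j', \ldots) = b_j x_j^2 \cdot P(u') = 0$, with the tuple non-zero because $x_j \neq 0$. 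In the remaining cases, where the attained term is $b_j a y_j^2$ or $b_j x_j y_j$, one has $y_j \neq 0$; I set $u = x_j/y_j$ and apply the symmetric argument to $Q(T) = T^2 + T + a + S/(b_j y_j^2)$ with $S = \sum_{i \neq j} b_i(x_i^2 + x_i y_i + a y_i^2)$, substituting $x_j' := u' y_j$ to produce an isotropic vector.

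The main subtlety I expect is verifying $v(Q(u)) > 0$ uniformly in the second case. When $m = v(b_j a y_j^2)$ one obtains $v(Q(u)) > v(a) \geq 0$, and when $m = v(b_j x_j y_j)$ one must combine $v(b_j x_j^2) \geq m$ (which forces $v(x_j/y_j) \geq 0$, excluding the incompatible sub-case $v(u)<0$) with $v(q(x, y)) > m$ to conclude $v(Q(u)) > v(u) \geq 0$. The hypothesis $v(a) \geq 0$ is needed precisely so that the leading coefficient $a$ of $Q$ and the constant term $1+R$ of $P$ lie in $\mathcal{O}_v$, making the Hensel setup legitimate.
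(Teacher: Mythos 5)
Your argument is correct and is essentially the paper's proof: the same approximation step for the forward direction, and for the converse the same use of Hensel's Lemma for an Artin--Schreier-type quadratic whose derivative is the unit $1$ in characteristic two, differing only in bookkeeping (the paper first disposes of $v(a)>0$, notes that the cross terms then never realize the minimum, scales $y_j=1$ and translates $x_j$ by a root of $z^2+z=q/b_j$, whereas you rescale a coordinate by a Hensel root of your $P$ or $Q$). One cosmetic slip: in your last sentence the roles of $P$ and $Q$ are interchanged ($P$ has leading coefficient $a$, while $a$ sits in the constant term of $Q$); you should also record explicitly that $v(R)\geq 0$, $v(S/(b_j y_j^2))\geq 0$ and $v(u)\geq 0$, all immediate from the term attaining the minimum, so that the Newton--Hensel setup over $\mathcal{O}_v$ is legitimate.
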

\begin{proof}
  We work as in the preceding lemma. If $q$ is isotropic over the henselisation $F_v$, so there is a non-trivial zero $x_1, \dotsc, x_{2^{k-1}}, y_1, \dotsc, y_{2^{k-1}}$ of $q$, we pick $x_i'$ and $y_i'$ such that $x_i'= 0$ if $x_i=0$ and $v(x_i/x_i'- 1) > 0$ otherwise, and similarly for $y_i'$.
  Then it is easy to verify the inequality $v(q(x_1', \dotsc, x_{2^{k-1}}', y_1', \dotsc, y_{2^{k-1}}')) > \min \{ v(b_i x_i'^2), v(b_i x_i' y_i'), v(b_i a y_i'^2) \colon i \geq 1 \}$.
  
  Assume conversely that $x_i, y_i$ are given such that the inequality from the statement is satisfied.
  Observe first that if $v(a) > 0$, then $q$ has a non-trivial zero in the henselisation $F_v$ since $X^2 + X + a$ has a zero by Hensel's Lemma, so let us assume that $v(a) = 0$. In particular, we have $v(b_i x_i y_i) \geq \min \{ v(b_i x_i^2), v(b_i a y_i^2))\}$ for every $i$.
  Let us assume that the minimum in $\min \{ v(b_i x_i^2), v(b_i x_i y_i), v(b_i a y_i^2) \colon i \geq 1 \}$ is attained in the term $v(b_j a y_j^2)$; the argument will work analogously if it is attained in $v(b_j x_j^2)$.
  By scaling, we may assume that $y_j = 1$.
  Hensel's Lemma allows us to find $z \in F_v$ with
  $z^2 + z = q(x_1, \dotsc, x_{2^{k-1}}, y_1, \dotsc, y_{2^{k-1}})/b_j$.
  Since the form $b_j (X_j^2 + X_j Y_j + a Y_j^2)$ is additive in both variables, we obtain an exact zero of $q$ in $F_v$ by replacing $x_j$ by $x_j + z$.
\end{proof}

We also state some results for $2$-fold Pfister forms over global fields for later use.
\begin{lemma}\label{lem:global_fields_local_global}
  A $2$-fold Pfister form over a global field is isotropic if becomes isotropic over all non-trivial henselisations and real completions.
\end{lemma}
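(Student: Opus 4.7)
The plan is to reinterpret the statement cohomologically via Fact \ref{fact:pfister_forms_and_cohomology}, and then to apply the classical Hasse principle for the $2$-torsion of the Brauer group of a global field.

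Let $q = \llangle a, b\rrsquare$ be a $2$-fold Pfister form over a global field $K$, and let $\alpha \in H^2(K)$ be its associated cohomology class. By Fact \ref{fact:pfister_forms_and_cohomology}, for any field extension $L/K$, the form $q$ is isotropic over $L$ if and only if $\alpha$ vanishes under the restriction map $H^2(K) \to H^2(L)$. Thus the goal becomes to deduce $\alpha = 0$ from the vanishing of $\alpha$ at every non-trivial henselisation and every real completion of $K$.

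The core input is the Albert--Brauer--Hasse--Noether theorem: for a global field $K$ (of any characteristic), the sum-of-localisations map $\Br(K) \to \bigoplus_v \Br(K_v)$, ranging over all places $v$, is injective. Under the standard identification of $H^2(K)$ with the $2$-torsion Brauer group $\Br(K)[2]$ -- via the Kummer isomorphism in characteristic not two, and via the Merkurjev symbol map in characteristic two, both compatible with the cohomological attachment to Pfister forms used in Section \ref{sec:defn_of_S} -- the vanishing of $\alpha$ is equivalent to that of a Brauer class. For a non-archimedean place $v$, the henselisation $K_v^h$ has the same absolute Galois group as the completion $K_v$ (since finite separable extensions extend uniquely to the completion), so vanishing of $\alpha$ in $H^2(K_v^h)$ is equivalent to vanishing in $H^2(K_v)$. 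For a complex archimedean place, $\Br(K_v)[2] = \Br(\mathbb{C})[2] = 0$, so no condition is imposed; the real archimedean places are exactly the "real completions" mentioned in the hypothesis. Combining these, the hypothesis forces $\alpha$ to vanish at every place, hence $\alpha = 0$ globally by Albert--Brauer--Hasse--Noether, and $q$ is isotropic.

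I expect the only delicate point to be the identification of $H^2(K)$ with $\Br(K)[2]$ in characteristic two, which is not entirely tautological but follows from the Bloch--Gabber--Kato theorem already invoked in Section \ref{sec:defn_of_S}. Once this is in hand, the lemma is a clean corollary of global class field theory.
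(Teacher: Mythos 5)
Your argument is correct and is essentially the paper's own proof: the paper likewise passes from isotropy to the vanishing of the class in $H^2$, embeds $H^2(K)$ into the Brauer group (the quaternion algebra attached to the $2$-fold Pfister form), and concludes by the Albert--Brauer--Hasse--Noether local--global principle, with the henselisation-versus-completion point disposed of just as easily (the paper uses density and Hensel's Lemma where you invoke the equality of absolute Galois groups).
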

Observe here that a Pfister form over a global field becomes isotropic over some henselisation if and only if it does so over the corresponding completion, as is easy to see by the density of global fields in their completions and Hensel's Lemma.
\begin{proof}
  In characteristic away from two, this follows immediately from the Hasse--Minkowski local--global principle on quadratic forms. 
  In full generality, one uses that for any field $K$, $H^2(K)$ embeds canonically into the Brauer group by \cite[Example 101.1(4)]{ElmanKarpenkoMerkurjev_AlgGeomTheoryOfQuadForms} (this effectively associates a quaternion algebra with its reduced norm form, a $2$-fold Pfister form) and then applies the Albert--Brauer--Brauer--Hasse--Noether local--global principle for the Brauer group.
\end{proof}

\begin{lemma}\label{lem:pfister_over_global_fields}
  Let $K_1$ be a global field and $\llangle a_0, a_1\rrsquare/K_1$ a $2$-fold Pfister form. For almost all valuations $v$ on $K_1$, the Pfister form $\llangle a_0, a_1\rrsquare$ becomes isotropic over the henselisation $(K_1)_v$.
  
  Furthermore, for any valuation $v$ on $K_1$ there exists a Pfister form $\llangle a_0, a_1\rrsquare/K_1$ which does not become isotropic over the henselisation $(K_1)_v$.
  If $\kar K_1 = 0$ and $v$ is not of residue characteristic $2$, we may additionally ensure that $\llangle a_0, a_1\rrsquare$ does become isotropic over every real completion and every completion with residue characteristic two.
\end{lemma}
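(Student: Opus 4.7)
The strategy is to use the correspondence between $2$-fold Pfister forms over $K_1$ and quaternion algebras, obtained by combining Fact~\ref{fact:pfister_forms_and_cohomology} with the embedding $H^2(K_1) \hookrightarrow \Br(K_1)$ of \cite[Example~101.1(4)]{ElmanKarpenkoMerkurjev_AlgGeomTheoryOfQuadForms}: a $2$-fold Pfister form is isotropic over an extension field if and only if the corresponding quaternion algebra splits there. With this dictionary, both assertions of the lemma reduce to statements about Brauer classes, controlled by the Albert--Brauer--Hasse--Noether local--global principle (already invoked in Lemma~\ref{lem:global_fields_local_global}).

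For the first assertion, the quaternion algebra associated to $\llangle a_0, a_1\rrsquare$ lies in the $2$-torsion of $\Br(K_1)$, so by Albert--Brauer--Hasse--Noether it has non-trivial local component at only finitely many valuations; at each remaining valuation $v$ it splits over $(K_1)_v$, and hence the Pfister form becomes isotropic over the completion and so also over the henselisation by the observation preceding this lemma.

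For the second assertion we invoke the surjectivity direction of Albert--Brauer--Hasse--Noether to prescribe local invariants. Given any finite set $S$ of places of $K_1$ with $|S|$ even, there is a class in $\Br(K_1)[2]$ with local invariant $\tfrac12$ at each place of $S$ and $0$ elsewhere; over a global field such a class is moreover represented by a single quaternion algebra, hence by a $2$-fold Pfister form, and this form is anisotropic exactly at the places in $S$. For the first sub-claim, take $S = \{v, w\}$ with $w$ any other valuation. For the second sub-claim, under the additional hypotheses ($\kar K_1 = 0$ and $v$ not of residue characteristic $2$), choose $w$ to be a non-archimedean valuation of residue characteristic different from $2$ and distinct from $v$; this is possible since $K_1$ has infinitely many such valuations but only finitely many real places or residue-characteristic-$2$ valuations. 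The resulting Pfister form is anisotropic over $(K_1)_v$ while being split, and in particular isotropic, at all other places—covering all real completions and all completions with residue characteristic $2$.

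The main subtlety is the claim that every class in $\Br(K_1)[2]$ over a global field is represented by a single quaternion algebra—needed to ensure that the constructed Brauer class genuinely corresponds to a $2$-fold Pfister form rather than merely to an element of $H^2(K_1)$. In characteristic not $2$ this is classical, following from the Merkurjev theorem describing $\Br(K_1)[2]$ as generated by quaternion symbols together with the observation that the $2$-torsion of each local Brauer group is generated by a single quaternion class; in characteristic $2$ the analogous statement holds via Albert's description of exponent-$p$ Brauer classes in characteristic $p$, the same local-global matching being the essential ingredient.
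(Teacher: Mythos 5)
Your argument is correct in substance, but it takes a genuinely different route: you run everything through the class-field-theoretic description of the Brauer group of a global field, which is precisely what the paper deliberately avoids (it remarks that the lemma ``follows easily from the determination of $H^2(K_1)$ in class field theory'' but chooses to give an ad hoc argument). For the first assertion the paper does not use almost-everywhere triviality of a global Brauer class; it simply observes that at any valuation with $v(a_0)=v(a_1)=0$ (and not of mixed characteristic $(0,2)$) the residue form has a zero because the residue field is $C_1$, and lifts that zero via Lemmas \ref{lem:isotropic_in_henselisation_char_not_two} and \ref{lem:isotropic_in_henselisation_char_two}. For the second assertion the paper constructs the form explicitly: $a_0$ a uniformiser for $v$, $a_1$ a $v$-unit whose residue gives an anisotropic $1$-fold form, so that Lemma \ref{lem:anisotropic_over_dvf} yields anisotropy over $(K_1)_v$; the extra splitting conditions in characteristic zero are then forced by weak approximation ($a_0$ positive at all orderings and close to $1$ at dyadic places, hence a local square). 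Your route instead prescribes local invariants via Albert--Brauer--Hasse--Noether, and it needs the additional fact that every class in $\Br(K_1)[2]$ is the class of a single quaternion algebra. That fact is true in all characteristics (exponent equals index over global fields), but your stated justification is not a proof of it: Merkurjev's theorem only gives sums of quaternion classes, and the local observation does not by itself produce a single global quaternion algebra --- the passage from ``exponent $2$'' to ``index $2$'' is exactly the global exponent--index theorem, itself part of the same class field theory package; so you should cite that (or the computation of $H^2$ of a global field) directly, or sidestep the issue by constructing the form explicitly as the paper does. What your approach buys is brevity and transparent bookkeeping of which places are anisotropic (the even-cardinality invariant condition); what the paper's approach buys is independence from the heavy machinery, using only $C_1$ residue fields, Hensel's Lemma, Lemma \ref{lem:anisotropic_over_dvf} and weak approximation. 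One cosmetic point: your invariant construction presupposes $v$ non-trivial (there is no local invariant at the trivial valuation), but that case is immediate since then $(K_1)_v=K_1$ and any form anisotropic at one completion works.
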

\begin{proof}
  All of this and more follows easily from the determination of $H^2(K_1)$ in class field theory, see \cite[Theorem 8.1.16]{NeukirchSchmidtWingberg}, but we give an ad hoc argument here.

   Given a Pfister form $\llangle a_0, a_1\rrsquare/K_1$, for any non-trivial valuation $v$ not of mixed characteristic $(0,2)$ with $v(a_0)=v(a_1)=0$, the residue Pfister form $\llangle \overline a_0, \overline a_1\rrsquare/K_1v$ has a zero over the $C_1$ field $K_1v$ (\cite[Theorem 6.2.6]{centralSimpleAlgebrasAndGalCohom}), and this gives a zero of $\llangle a_0, a_1\rrsquare/(K_1)_v$ by Lemmas \ref{lem:isotropic_in_henselisation_char_not_two} and \ref{lem:isotropic_in_henselisation_char_two}.
   Since almost all valuations on $K_1$ satisfy $v(a_0)=v(a_1)=0$, this proves the first part.

  For the second part, we may assume that $v$ is non-trivial. Choose $a_0, a_1 \in K_1$ such that $a_0$ is a uniformiser for $v$ and $v(a_1)=0$ with $\llangle \overline a_1 \rrsquare$ anisotropic over $K_0 v$, which is possible since $K_1 v$ has a separable quadratic extension; then $\llangle a_0, a_1\rrsquare$ is anisotropic over $(K_1)_v$ by Lemma \ref{lem:anisotropic_over_dvf}.
  In characteristic zero we can use Weak Approximation \cite[Theorem II.3.4]{Neukirch} to furthermore force $a_0 > 0$ for all orderings $>$ of $K_0$, and $w(a_0-1) > w(8)$ for all places $w$ of residue characteristic two, which means that $a_0$ is a square at all real completions and all completions of residue characteristic two, so $\llangle a_0, a_1\rrsquare$ becomes isotropic over those completions.
\end{proof}

\begin{lemma}\label{lem:pfister_over_global_fields_special_form}
  Every $2$-fold Pfister form $q$ over a global field $K_1$ is isomorphic to a form $\llangle b_0, b_1\rrsquare$ such that $v(b_1) \geq 0$ for all valuations $v$ on $K_1$ such that $q$ does not become isotropic over the henselisation $(K_1)_v$.
\end{lemma}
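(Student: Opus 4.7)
The plan is to construct a quadratic extension $E/K_1$ of a specific shape which splits the quaternion algebra $A$ underlying $q$, and then to read off the desired presentation of $q$ from this splitting. Let $T$ denote the set of non-archimedean valuations $v$ on $K_1$ where $q$ remains anisotropic over $(K_1)_v$; by Lemma \ref{lem:pfister_over_global_fields}, $T$ is finite. I would seek $b_1 \in K_1$ with $v(b_1) \geq 0$ for all $v \in T$ such that $E := K_1[Y]/(Y^2 - b_1)$ (in characteristic not two) or $E := K_1[Y]/(Y^2 + Y + b_1)$ (in characteristic two) splits $A$.

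To construct $b_1$, pick at each $v \in T$ a local candidate $c_v \in \mathcal{O}_v$ defining the unique unramified quadratic extension of $(K_1)_v$: in characteristic not two, take $c_v \in \mathcal{O}_v^\times$ whose reduction in the residue field $k_v$ is a non-square; in characteristic two, take $c_v \in \mathcal{O}_v$ whose reduction lies outside $\wp(k_v) = \{x^2+x : x \in k_v\}$. Weak approximation on $K_1$ then furnishes $b_1 \in K_1$ with $b_1 \equiv c_v \pmod{\mathfrak{m}_v}$ for every $v \in T$; in characteristic zero, simultaneously arrange $b_1 < 0$ at each real place of $K_1$ where $q$ is anisotropic, so that $K_1(\sqrt{b_1})$ is non-trivial at such places. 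This gives $v(b_1) = v(c_v) = 0$ for every $v \in T$, and makes $E \otimes_{K_1} (K_1)_v$ a non-trivial quadratic extension at every $v$ at which $q$ remains anisotropic.

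For the local--global step I would invoke Lemma \ref{lem:global_fields_local_global}: to check that $E$ splits $A$ globally it suffices to check this over every completion of $K_1$. At places $v$ where $q$ is already isotropic over $(K_1)_v$, the local algebra $A_v$ is split and $E$ splits it trivially. At the remaining places, $A_v$ is a non-trivial (hence division) quaternion algebra over the local field $(K_1)_v$, while $E \otimes_{K_1} (K_1)_v$ is by construction a non-trivial quadratic extension; any such extension embeds into the unique division quaternion algebra over a local field and therefore splits $A_v$. Consequently $E$ splits $A$ globally, which by the standard theory of central simple algebras means $A$ contains $E$ as a maximal subfield; the classification of quaternion algebras as cyclic algebras of degree two then yields $b_0 \in K_1^\times$ with $q \cong \llangle b_0, b_1\rrsquare$.

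The main technical point is simultaneously imposing the local conditions at all places in $T$ (plus the sign conditions at real anisotropic places in characteristic zero) on a single globally defined $b_1$, which is a standard application of weak approximation. The edge case in which $T = \emptyset$ and there are no real anisotropic places reduces, via Lemma \ref{lem:global_fields_local_global}, to $q$ being globally isotropic, in which case any presentation satisfies the (vacuous) valuation condition.
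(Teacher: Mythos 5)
Your overall strategy (find a quadratic extension $E$ defined by $b_1$, integral at the bad places, which is non-split locally at every place where the quaternion algebra is ramified, conclude by the local--global principle that $E$ splits the algebra, and then extract the presentation $\llangle b_0, b_1\rrsquare$) is sound, and in characteristic two it is essentially the paper's own argument (weak approximation forcing $X^2+X+b_1$ to be inert at the bad places, then Lemma \ref{lem:global_fields_local_global} and the common-slot result from Elman--Karpenko--Merkurjev). In characteristic not two the paper instead disposes of the lemma in one line: the isomorphism class of $\llangle a_0,a_1\rrsquare$ depends on $a_1$ only through its square class, so one may simply multiply $a_1$ by the square of an element clearing its denominators at the finitely many relevant places.

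However, your construction of $b_1$ has a genuine flaw at places of mixed characteristic $(0,2)$. At a dyadic place $v$ of a number field there is no $c_v\in\mathcal{O}_v^\times$ whose residue is a non-square: every element of a finite field of characteristic two is a square. Moreover, even replacing this by ``$c_v$ a non-square unit of $(K_1)_v$'', a congruence $b_1\equiv c_v \pmod{\mathfrak{m}_v}$ does not control the square class of $b_1$ at a dyadic place, so $E\otimes_{K_1}(K_1)_v$ need not be a non-trivial (let alone unramified) quadratic extension there. This is not a vacuous worry: dyadic places can lie in your set $T$ (e.g.\ $\llangle -1,-1\rrsquare$ over $\Q$ is anisotropic at $2$), so the step ``$E$ is non-split at every ramified place'' fails as written. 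The gap is easily repaired: either fall back on the square-scaling observation above in characteristic not two, or impose a sharper dyadic approximation such as $v(b_1-c_v)>v(8)$ with $c_v$ a unit non-square of $(K_1)_v$ (this is exactly the kind of condition the paper uses in the proof of Lemma \ref{lem:pfister_over_global_fields} to control behaviour at residue characteristic two). With that correction, and noting that the remaining non-dyadic and equal-characteristic-two places are indeed handled by your mod-$\mathfrak{m}_v$ congruences, your argument goes through.
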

\begin{proof}
  Let $q$ be given as $\llangle a_0, a_1\rrsquare$.
  Note first that in characteristic away from two we may multiply $a_1$ by an arbitrary square without affecting the isomorphism type of the Pfister form, which proves the claim, so let us assume that $\kar K_0 = 2$. We may assume that $q$ is anisotropic.
  Use weak approximation to choose $b_1 \in K_1$ such that for each valuation $v$ on $K_1$ over whose henselisation $(K_1)_v$ the form $q$ does not split we have $v(b_1) \geq 0$ and $X^2 + X + b_1$ irreducible over the residue field.
  Then all valuations $v$ in question are inert in the field extension $L=K_1[X]/(X^2+X+b_1)$ of $K$ and hence $A$ becomes split over $L$ by Lemma \ref{lem:global_fields_local_global}.
  It follows by \cite[Theorem 34.22(3)]{ElmanKarpenkoMerkurjev_AlgGeomTheoryOfQuadForms} that $q$ is isomorphic to $\llangle b_0, b_1\rrsquare$ for some $b_0 \in K_1^\times$.
\end{proof}

\section{The local--global principle over finitely generated fields}
\label{sec:local_global}

Let $K$ be a finitely generated field of Kronecker dimension $d \geq 1$; recall that this means that $K$ has transcendence degree $d-1$ over a global field.
If $K$ has characteristic two, we will make use of the following hypothesis.
\begin{hypothesis}\label{hyp:resolution_of_singularities}
Following \cite[Definition 4.18]{JannsenHassePrinciples}, we say that \emph{resolution of singularities holds over finite fields of characteristic two}, if for every integral and proper variety over a finite field $\F$ of characteristic two there exists a proper birational morphism from a smooth variety over $\F$, and furthermore any smooth affine variety over such a finite field $\F$ can be realised as an open subvariety of a projective smooth variety over $\F$ whose complement is a simple normal crossings divisor.
\end{hypothesis}

We can now state an important result on $(d+1)$-fold Pfister forms over $K$, relating their isotropy to the isotropy over certain henselisations.
Note here that it follows from standard results in valuation theory, see \cite[Theorem 3.4.3]{englerPrestel} and its corollaries, that any valuation $v$ on $K$ has value group of archimedean rank $\leq d$, and that if it has archimedean rank $d$, its value group is isomorphic to the lexicographic power $\Z^d$ and its residue field is finite.
\begin{theorem}\label{thm:pfister_loc_glob}
  Let $q = \llangle a_0, \dotsc, a_d \rrsquare$ be a $(d+1)$-fold Pfister form over $K$.
  If $\kar K = 0$, assume that $a_{d-1}, a_d \in \Q$ and the Pfister form $\llangle a_{d-1}, a_d\rrsquare/\Q$ becomes isotropic over $\Q_2$ and $\R$.
  If $\kar K = 2$, assume that $d \leq 3$ or resolution of singularities holds over finite fields of characteristic two.

  Then for any finite extension $L/K$ such that $q$ is anisotropic over $L$, there exists a valuation $v$ of archimedean rank $d$ such that $q$ remains anisotropic over the henselisation $L_v$. Furthermore we can take $v$ to be of residue characteristic not two if $\kar K = 0$.
\end{theorem}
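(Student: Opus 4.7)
The plan is to translate the isotropy statement into Galois cohomology via Fact \ref{fact:pfister_forms_and_cohomology}: the form $q$ is anisotropic over $L$ if and only if its associated class $\alpha \in H^{d+1}(L)$ is nonzero, and $q$ survives the henselisation $L_v$ if and only if the restriction of $\alpha$ to $H^{d+1}(L_v)$ is nonzero. By Proposition \ref{prop:local_cohomology}(2), for a rank-$d$ valuation on $L$ with value group $\Z^d$ and finite residue field, an iterated residue map (obtained by composing the $\partial_{v_i}$ for the individual rank-one steps of the composite valuation) identifies $H^{d+1}(L_v)$ with $H^1$ of the finite residue field. So it suffices to produce a tower of rank-one valuations $v_1, v_2, \dotsc, v_d$ (with $v_{i+1}$ living on the residue field of $v_i$, each with value group $\Z$, and with final residue field finite) along which the composition $\partial_{v_d} \circ \cdots \circ \partial_{v_1}$ does not annihilate $\alpha$. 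I would construct such a tower by induction on $d$.

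For the inductive step $d \ge 2$, the essential ingredient is a Kato-style local--global principle: if $\alpha \in H^{d+1}(L)$ is nonzero, then there is a rank-one valuation $v$ on $L$, with discrete value group and with residue field $Lv$ finitely generated of Kronecker dimension $d-1$, such that $\partial_v(\alpha) \ne 0$ in $H^d(Lv)$. In positive characteristic $\ne 2$ this is the Kato conjecture for function fields of varieties over finite fields, available unconditionally via Kerz--Saito-type arguments (this is the point where the paper uses de~Jong alterations in lieu of resolution of singularities), and in characteristic $2$ it is Jannsen's version under Hypothesis \ref{hyp:resolution_of_singularities}. By Proposition \ref{prop:local_cohomology}(4), the class $\partial_v(\alpha)$ corresponds to a $d$-fold Pfister form over $Lv$ obtained from $\llangle a_0,\dotsc,a_d\rrsquare$ by deleting (after rescaling the $a_i$ by units and possibly reordering) the slot corresponding to a uniformiser of $v$. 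The inductive hypothesis applied in Kronecker dimension $d-1$ then produces the remaining $d-1$ valuations, and the composite $v \circ w$ with the valuation $w$ furnished by the induction is the desired rank-$d$ valuation on $L$.

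The base case $d = 1$ is Lemma \ref{lem:global_fields_local_global}. When $\kar K = 0$ the extra assumption that $a_{d-1},a_d\in\Q$ and $\llangle a_{d-1},a_d\rrsquare$ is isotropic over $\Q_2$ and $\R$ becomes essential: these assumptions guarantee that when I finally reach the last two levels of the induction, the residual $2$-fold Pfister form to which Lemma \ref{lem:global_fields_local_global} applies is, up to isomorphism, $\llangle a_{d-1},a_d\rrsquare$, whose anisotropic places are necessarily non-archimedean and of odd residue characteristic. Propagating this through the induction forces all the $v_i$ to have residue characteristic $\ne 2$, and in particular yields a rank-$d$ valuation of $L$ with residue characteristic $\ne 2$ as required. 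One also needs Proposition \ref{prop:local_cohomology}(3) to check compatibility of the residue maps with passing from $K$ to the finite extension $L$ when invoking the Kato-style principle on $L$ directly.

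The main obstacle is the local--global principle itself, which is a deep external input; modulo this, the argument is an induction whose bookkeeping is non-trivial mainly because one must (i) ensure that at each step the residue form is still a Pfister form of the shape the inductive hypothesis can handle, and (ii) in characteristic zero, keep the residue characteristic odd throughout, a constraint that propagates only thanks to the hypotheses on $a_{d-1}, a_d$. A secondary technical wrinkle is that the Kato conjecture in the precise form needed must produce a valuation whose residue field is again a finitely generated field of the correct Kronecker dimension (not merely any rank-one valuation), which is why one works with valuations coming from codimension-one points of nice integral models of $L$.
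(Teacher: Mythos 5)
Your proposal is essentially the paper's own argument: the paper likewise translates anisotropy into cohomology via Fact \ref{fact:pfister_forms_and_cohomology}, proves exactly your rank-one Kato-style local--global statement as Theorem \ref{thm:cohom_loc_glob} (reduced via odd-degree alterations to the Kerz--Saito/Jannsen/Suwa theorems), and then inducts on the Kronecker dimension by composing valuations, using the residue isomorphisms of Proposition \ref{prop:local_cohomology} over henselisations to pull nonvanishing back to $H^{d+1}(L_v)$. The only cosmetic difference is bookkeeping: the paper carries the class in the shape $(a_{d-1})\cup(a_d)\cup\gamma$ through the induction instead of re-expressing each residue as a Pfister form with a deleted uniformiser slot, which is slightly cleaner but amounts to the same use of the hypotheses on $a_{d-1},a_d$ to kill the real, dyadic and residue-characteristic-two obstructions in characteristic zero.
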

Note that when $d=1$, i.e.\ $K$ is a global field, the statement follows from the Hasse--Minkowski Theorem on quadratic forms. One may also compare with the Hasse--Brauer--Noether Theorem in the case of a quaternion algebra.
The special condition in characteristic zero of $\llangle a_0, a_1\rrsquare$ becoming isotropic over $\Q_2$ and $\R$ is technical; it will reappear in subsequent lemmas.

Theorem \ref{thm:pfister_loc_glob} will be used in the form of the following corollary, the main result of this section.
\begin{corollary}\label{cor:S_loc_glob}
  For $q$ as above, and under the same assumptions as above if $\kar K = 2$, we have
  \[S_c(q/K) = \bigcap_v S_c(q/K_v) \cap K, \]
  where $v$ varies over valuations on $K$ of archimedean rank $d$. We may restrict to those $v$ of residue characteristic not two if $\kar K = 0$.
\end{corollary}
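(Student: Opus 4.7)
The inclusion $S_c(q/K) \subseteq \bigcap_v S_c(q/K_v) \cap K$ is immediate from the general fact, noted after Definition \ref{defn:S}, that $S_c$ grows under field extension. So the content is the reverse inclusion, and my plan is to handle it by contrapositive reasoning, invoking Theorem \ref{thm:pfister_loc_glob} first for $K$ itself and then for the putative quadratic extension of $K$.

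Fix $x \in K$ lying in every $S_c(q/K_v)$ as $v$ ranges over valuations of archimedean rank $d$ (of residue characteristic not two if $\kar K = 0$); I want to show $x \in S_c(q/K)$. If $q$ is already isotropic over $K$ there is nothing to do, so assume it is anisotropic over $K$. To establish that $f := X^2 + (1-x)X + c$ is irreducible over $K$, I would apply Theorem \ref{thm:pfister_loc_glob} to the trivial extension $L = K$ to produce a valuation $w$ of archimedean rank $d$ (and residue characteristic not two when $\kar K = 0$) such that $q$ remains anisotropic over $K_w$. Since $x \in S_c(q/K_w)$ and the form is anisotropic there, the definition of $S_c$ forces $f$ to be irreducible over $K_w$, hence over $K$.

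It remains to show that $q$ becomes isotropic over $L := K[X]/(f)$. Assuming for contradiction that $q$ is still anisotropic over $L$, apply Theorem \ref{thm:pfister_loc_glob} to the quadratic extension $L/K$ to obtain an archimedean rank $d$ valuation $v$ on $L$ (of suitable residue characteristic) over which $q$ is still anisotropic, and set $w := v|_K$. A small valuation-theoretic check, using that $[vL : wK]$ divides $[L:K] = 2$ and hence $vL/wK$ is torsion, shows that $w$ again has archimedean rank $d$ (and the same residue characteristic as $v$), so $x \in S_c(q/K_w)$ is available. The form $q$ cannot be isotropic over $K_w$, for then it would be isotropic over the overfield $L_v$; so $q$ is anisotropic over $K_w$, and the definition of $S_c$ yields both that $f$ is irreducible over $K_w$ and that $q$ is isotropic over $K_w[X]/(f)$.

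The main conceptual step, and the one requiring the most care, is to identify this last algebra with $L_v$: the irreducibility of $f$ over the henselian field $K_w$ implies that $w$ admits a \emph{unique} extension to $L$, which must therefore coincide with our $v$, and the corresponding henselisation is precisely $L \otimes_K K_w = K_w[X]/(f)$. Hence $q$ is isotropic over $L_v$, contradicting the choice of $v$. This contradiction completes the proof, and the residue-characteristic refinement in the $\kar K = 0$ case is carried along at each invocation of Theorem \ref{thm:pfister_loc_glob}, since $w$ inherits residue characteristic from $v$.
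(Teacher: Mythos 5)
Your proof is correct and follows essentially the same route as the paper: both arguments hinge on applying Theorem \ref{thm:pfister_loc_glob} to $K$ itself and to $L=K[X]/(X^2+(1-x)X+c)$, restricting the resulting rank-$d$ valuation to $K$, and identifying the henselisation $L_v$ with $K_w[X]/(f)$ when $f$ stays irreducible over the henselisation. The only difference is the logical packaging (you argue directly by contradiction, the paper argues contrapositively with a case split on reducibility of $f$ over $K$ and over $K_v$), which is immaterial.
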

\begin{proof}
  If $q$ is isotropic over $K$, then both sides are equal to $K$, so assume this is not the case.
  The inclusion $\subseteq$ is immediate from the definition of $S_c$.

  For the other inclusion, let $x \in K$ not be contained in the left-hand side.
  Suppose first that the polynomial $X^2+(1-x)X+c$ is irreducible over $K$. Then $q$ is not isotropic over the field $L=K[X]/(X^2+(1-x)X+c)$, so by the theorem there is a rank-$d$ valuation $v$ on $L$ such that $q$ is anisotropic over $L_v$.
  Write $K_v$ for the henselisation of $K$ with respect to the restriction of $v$.
  If $X^2+(1-x)X+c$ is irreducible over $K_v$, then $L_v = K_v[X]/(X^2+(1-x)X+c)$, from which we deduce that $x \not\in S_c(q/K_v)$.
  Otherwise $X^2+(1-x)X+c$ is reducible over $K_v$ and $q$ is anisotropic over $K_v = L_v$, so likewise $x \not\in S_c(q/K_v)$.

  If $X^2+(1-x)X+c$ is reducible over $K$, we may simply pick any rank-$d$ valuation $v$ on $K$ such that $q$ is anisotropic over $K_v$; such a valuation exists by the theorem.
  Now $x \not\in S_c(q/K_v)$.
\end{proof}

To prove Theorem \ref{thm:pfister_loc_glob}, we reduce to a local--global principle in cohomology.
\begin{theorem}\label{thm:cohom_loc_glob}
  Let $K$ be as above and $\alpha \in H^{d+1}(K)$ be non-zero.
  If $\kar K = 0$, assume furthermore that $\alpha$ becomes trivial when restricted to any overfield of $K$ embedding $\R$ or $\Q_2$.
  If $\kar K = 2$, assume that $d \leq 3$ or resolution of singularities holds over finite fields of characteristic two.

  Then there exists a valuation $v$ on $K$, with value group $\Z$ and residue field $Kv$ finitely generated of Kronecker dimension $d-1$, with $\kar Kv \neq 2$ unless $\kar K = 2$, such that $\alpha$ is not annihilated by the composite map
  \[ H^{d+1}(K) \to H^{d+1}(K_v) \to H^d(Kv), \]
  where the first map is cohomological restriction to the henselisation $K_v$, and the second map is $\partial_v$ as in Proposition \ref{prop:local_cohomology}.
\end{theorem}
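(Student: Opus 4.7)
The plan is to reduce Theorem \ref{thm:cohom_loc_glob} to the Kato conjecture on local--global principles for Galois cohomology of arithmetic schemes, which is now a theorem in sufficient generality to cover our needs.

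First I would realise $K$ as the function field of an integral proper scheme of absolute dimension $d$: over $\F_p$ if $\kar K = p > 0$, or over $\operatorname{Spec}\Z$ if $\kar K = 0$. I would then replace this scheme by a regular proper model $X$. In characteristic different from two, I would invoke de Jong's theorem of alterations, refined by Gabber--Illusie--Temkin, to get an alteration of generic degree prime to $2$; in characteristic two, Hypothesis \ref{hyp:resolution_of_singularities} directly gives a birational regular model. Since the degree is prime to $2$ and our cohomology is $2$-primary torsion, a standard norm-restriction argument shows that the restriction $H^{d+1}(K) \to H^{d+1}(K(X))$ is injective, so we may replace $\alpha$ by its image and assume $K = K(X)$.

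Next I would apply the Kato conjecture to $X$. For a regular projective variety of dimension $d$ over $\F_p$, Kato's conjecture (proved by Kerz--Saito in odd characteristic and, in characteristic two conditional on resolution, by Jannsen) asserts that the Kato complex
\[ H^{d+1}(K(X)) \xrightarrow{\bigoplus \partial_v} \bigoplus_{x \in X^{(1)}} H^d(\kappa(x)) \]
has trivial kernel; the expected ``head'' contribution $H^{d+1}(X)$ vanishes because $\F_p$ has $2$-cohomological dimension one. In the number field case, the analogous Kato complex for the arithmetic scheme $X$ is exact at the first position provided one accounts for extra contributions coming from the archimedean places and from residues above the prime $(2)$. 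The hypothesis that $\alpha$ becomes trivial over every overfield embedding $\R$ or $\Q_2$ is designed precisely so that these ``infinite'' contributions vanish and the conjecture can be applied. Either way, one concludes that the collected residue map has $\alpha$ in its image-detecting subgroup and that some individual $\partial_v \alpha$ is nonzero.

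Finally, having located a codimension-one point $x \in X^{(1)}$ with $\partial_v \alpha \neq 0$ (in characteristic zero, chosen not to lie over $(2)$, which is again permitted by the assumption on $\Q_2$), regularity of $X$ makes $\mathcal{O}_{X,x}$ a discrete valuation ring. The associated valuation $v$ has value group $\Z$ and residue field $\kappa(x)$, which is finitely generated of Kronecker dimension $d-1$, with $\kar \kappa(x) \neq 2$ in the case $\kar K = 0$. The composite map in the statement of the theorem coincides with the residue $\partial_v$ of Proposition \ref{prop:local_cohomology}, so $\alpha$ has nonzero image as required. The main obstacle I expect is the careful invocation of the Kato conjecture with the correct archimedean and $2$-adic bookkeeping in the characteristic-zero case; this is exactly what the somewhat awkward hypotheses on $\alpha$ are tailored to handle, and I would expect the cleanest reference to be Jannsen's packaging of the Kato--Kerz--Saito results for arithmetic schemes.
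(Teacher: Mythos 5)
Your proposal follows essentially the paper's route: pass to a regular proper model via a prime-to-two (Gabber) alteration away from characteristic two, respectively via resolution in characteristic two, and then quote the proven cases of Kato's conjecture (Kerz--Saito in odd characteristic, Jannsen under resolution in characteristic two; note you omit the branch $\kar K=2$, $d\le 3$, which the paper handles via Cossart--Piltant resolution of threefolds together with Suwa's case of the conjecture). Two steps need tightening. First, ``replace $\alpha$ by its image and assume $K=K(X)$'' is not quite enough: the theorem demands a valuation on $K$ itself, so after finding a divisorial valuation $w$ on the odd-degree extension $L=K(X)$ with $\partial_w\operatorname{res}_{L/K}\alpha\neq 0$ you must descend by restricting $w$ to $K$ and invoking the compatibility of the residue maps with finite extensions, Proposition \ref{prop:local_cohomology}(3), to conclude $\partial_{w|_K}\alpha\neq 0$; injectivity of restriction for odd-degree extensions only guarantees $\operatorname{res}_{L/K}\alpha\neq 0$ and by itself produces nothing on $K$. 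Second, in characteristic zero the model should be taken proper and flat over $\Z[1/2]$, not over $\operatorname{Spec}\Z$: the form of Kato's conjecture proved by Kerz--Saito that the paper uses is for $\Z[1/2]$-schemes, with the $2$-adic and archimedean ``boundary'' terms given by $H^{d+1}$ of residue fields of maximal-dimensional points of $X_{\Q_2}$ and $X_\R$ --- these are overfields of $K$ embedding $\Q_2$ or $\R$ and are killed precisely by your hypothesis on $\alpha$. Over $\Z$ you would instead encounter residues into characteristic-two residue fields, i.e.\ mixed characteristic $(0,2)$, where the maps of Proposition \ref{prop:local_cohomology} are not available, and the hypothesis on $\Q_2$ does not dispose of those terms (it controls the generic-fibre contributions over $\Q_2$, not the special fibre over $(2)$). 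With these adjustments your argument coincides with the paper's proof.
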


To prove this theorem in turn, we first reduce to the situation where $K$ has a regular proper model over $\Z[1/2]$ or a finite field $\mathbb{F}_p$, i.e.\ a regular integral variety flat and proper over these rings whose function field is $K$.

\begin{lemma}\label{lem:regular_model_after_odd_extn}
  For every finitely generated field $K$ -- assuming that $\kar \neq 2$, or $K$ is of transcendence degree at most $3$ over $\F_2$, or resolution of singularities holds over finite fields of characteristic two -- there exists a finite extension $L/K$ of odd degree and a regular integral variety $X$ proper and flat over $\Z[1/2]$ or a finite field $\F_p$ with function field $L$. In characteristic two, we may choose $L=K$.
\end{lemma}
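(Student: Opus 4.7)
The plan is to construct an integral, proper, flat model of $K$ over the appropriate base, and then improve it to a regular model: either by an odd-degree alteration (when $2$ is invertible on the base), or by a direct appeal to resolution of singularities (in characteristic $2$).

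First I would produce the model $X_0$. In positive characteristic $p$, I obtain $X_0$ by taking the projective closure of the spectrum of any finitely generated integral $\F_p$-subalgebra of $K$ with fraction field $K$ (cf.\ \cite[Proposition 2.2.13]{poonenRationalPoints}, as already used in the proof of Lemma \ref{lem:excellent_dvr}); flatness over the field $\F_p$ is automatic. In characteristic zero I would start with a projective model $Y \subseteq \mathbb{P}^n_\Q$ having function field $K$, and take for $X_0$ the scheme-theoretic closure of $Y$ inside $\mathbb{P}^n_{\Z[1/2]}$. This $X_0$ is integral and projective over $\Z[1/2]$, it has generic fibre $Y$ (so function field $K$), and it is flat over $\Z[1/2]$ because it is integral and dominates the Dedekind base (its local sections are torsion-free).

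Next, in characteristic different from $2$, I would apply Gabber's prime-to-$\ell$ refinement of de Jong's theorem on alterations, with $\ell=2$. The base $\F_p$ (for odd $p$) or $\Z[1/2]$ is a regular excellent Dedekind domain on which $2$ is invertible, hence $X_0$ is noetherian and quasi-excellent with $2$ invertible. Gabber's theorem then yields a proper, surjective, generically finite morphism $f\colon X \to X_0$ with $X$ integral and regular and with odd generic degree $[K(X):K(X_0)]$. Setting $L = K(X)$ provides the desired odd-degree finite extension of $K$; $X$ is proper over the base by composition and flat by the same torsion-freeness argument as for $X_0$.

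In characteristic $2$, odd-degree alterations cannot be used to resolve singularities, so I would appeal directly to resolution of singularities. Under Hypothesis \ref{hyp:resolution_of_singularities}, the model $X_0$ over $\F_2$ admits a proper birational morphism $X \to X_0$ with $X$ smooth projective, yielding the conclusion with $L = K$. Unconditionally, when $d \leq 3$, the required resolution is classical for curves and surfaces, and for threefolds in characteristic $2$ it is due to Cossart and Piltant. The main substantive point, and the innovation over the approach via resolution of singularities, is the use of Gabber's theorem to trade regularity for an odd-degree field extension whenever $2$ is invertible; the remaining ingredients---spreading-out to obtain $X_0$ and verifying the quasi-excellence hypothesis---are routine.
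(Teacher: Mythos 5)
Your proposal is correct and follows essentially the same route as the paper: spread out to an integral proper flat model over $\Z[1/2]$ or $\F_p$, apply Gabber's prime-to-$2$ alterations when $2$ is invertible, and invoke resolution (Hypothesis \ref{hyp:resolution_of_singularities} or Cossart--Piltant for $d \leq 3$) in characteristic two, with flatness over the Dedekind base coming from integrality and dominance. The only cosmetic difference is that the paper notes the alteration need not be integral a priori and passes to a connected (hence integral, by regularity) component, a one-line fix you implicitly assume.
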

In characteristic away from two this is a weak variant of resolution of singularities, which we deduce from results on alterations due to Gabber.
\begin{proof}
  Consider first the case of characteristic zero.
  Then $K$ has a proper model over $\Z[1/2]$ (or even over $\Z$), i.e.\ an integral scheme $V$ proper and flat over $\Z[1/2]$ such that $K$ is the residue field of the generic point of $V$.
  (Such a model is easily obtained by writing $K$ as $\Q(t_2, \dotsc, t_d, s)$ where the $t_i$ form a transcendence basis and $s$ is integral over $\Z[1/2][t_2, \dotsc, t_d]$, and taking $V$ to be the projective closure of $\operatorname{Spec}(\Z[1/2][t_2, \dotsc, t_d,s])$.)
  
  By \cite[Theorem 2.4]{travauxGabberX}, there exists a projective $2'$-alteration $X \to V$, i.e.\ a proper surjective generically finite maximally dominating (i.e.\ every irreducible component dominates $V$) morphism of odd degree at generic points, with $X$ regular.
  By replacing $X$ with one of its connected components we may take $X$ to be integral, since non-empty connected regular schemes are integral.
  The map $X \to V$ remains surjective, since it is still dominant and proper.
  
  The morphism $X \to V \to \operatorname{Spec}(\Z[1/2])$ is flat because it is a surjective morphism from an integral scheme to a Dedekind scheme, see \cite[Proposition 4.3.9]{LiuAlgGeomAndArithCurves}.
  Now the function field of $X$ is an odd degree extension of $K$, proving the claim.

  If $K$ has characteristic $p>0$, it has a proper model $V$ over $\F_p$ (by the same argument as above, see also \cite[Proposition 2.2.13]{poonenRationalPoints}). 
  If $p=2$, there exists a proper birational morphism from a smooth variety $X$ over $\F_2$, either by the assumption of resolution of singularities or by $d \leq 3$ since resolution is known up to dimension $3$ over perfect fields by \cite{CossartPiltant_ResolutionOfSingularitiesOfThreefoldsII}, so the function field of $X$ is $K$. Hence let us assume that $p>2$.
  By \cite[Theorem 2.1]{travauxGabberX}, there exists a projective $2'$-alteration $X \to V$, with $X$ smooth over a finite extension of $\F_p$ and hence regular.
  We proceed as above.
\end{proof}

In the situation where $K$ has a regular proper model, our cohomological local-global principle is related to local-global principles conjectured by Kato, given in the following two theorems.
\begin{theorem}[Kerz--Saito/Jannsen/Suwa]\label{thm:KerzSaitoFiniteField}
  Let $X$ be an integral variety proper and smooth of dimension $d$ over a finite field $\F$. If $\kar \F=2$ then assume that $d \leq 3$ or resolution of singularities holds over finite fields of characteristic two. Then
  \[ H^{d+1}(\F(X)) \to \bigoplus_x H^d(x) \]
  is injective.
  Here the sum is over the generic points $x$ of irreducible codimension-$1$ subvarieties of $X$ (or equivalently prime divisors on $X$), $H^d(x)$ denotes cohomology of the residue field of $x$, and the map is given componentwise by the composition \[ H^{d+1}(\F(X)) \to H^{d+1}(\F(X)_v) \to H^d(x) ,\] where $v$ is the discrete valuation on $\F(X)$ induced by $x$, the left-hand map is cohomological restriction to the henselisation $\F(X)_v$ and the right-hand map is the map $\partial_v$ from Proposition \ref{prop:local_cohomology}.
\end{theorem}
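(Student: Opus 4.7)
The plan is to reduce the statement to established results on Kato's local--global conjecture for smooth proper varieties over finite fields. The map appearing here is precisely the first differential in the Kato complex \[ C^\bullet(X, \Z/2(d-1)) \colon \; H^{d+1}(\F(X)) \to \bigoplus_{x \in X^{(1)}} H^d(\kappa(x)) \to \bigoplus_{x \in X^{(2)}} H^{d-1}(\kappa(x)) \to \cdots, \] so the injectivity in the statement is a (weak) consequence of the exactness of $C^\bullet(X,\Z/2(d-1))$ at its leftmost term. Here the cohomology groups $H^{i}(\kappa(x)) = H^{i,i-1}(\kappa(x),\Z/2)$ are the ones defined in the paper, and the differentials are sums of residue maps $\partial_v$.

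The first step is a compatibility check: verify that the explicit composition $H^{d+1}(\F(X)) \to H^{d+1}(\F(X)_v) \xrightarrow{\partial_v} H^d(\kappa(x))$ used in the statement really coincides, up to sign, with the standard Kato differential. In characteristic different from the residue characteristics this is standard (and is the content of the identification between the construction of $\partial_v$ via henselisations in \cite{centralSimpleAlgebrasAndGalCohom} and Kato's original definition); in residue characteristic two it is contained in Kato's original setup in \cite{kato}.

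The second step is to invoke the appropriate known case of the Kato conjecture. When $\kar \F \neq 2$, the coefficients $\Z/2$ are invertible on $X$, and the full exactness of the Kato complex for smooth projective varieties over a finite field has been established by Kerz--Saito (with contributions by Jannsen); in particular the leftmost map is injective, yielding the theorem in this case. When $\kar \F = 2$ we have the delicate equal-characteristic situation $\ell = p = 2$, and the coefficients must be interpreted via logarithmic de Rham--Witt sheaves, equivalently Milnor $K$-theory mod $2$ (agreeing with the paper's convention via Bloch--Gabber--Kato). The analogue of the Kato conjecture in this setting has been proved, conditional on resolution of singularities over finite fields of characteristic two, by Jannsen and Suwa; this is Hypothesis \ref{hyp:resolution_of_singularities}. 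For $d \leq 3$, resolution of singularities is known unconditionally over perfect fields of positive characteristic by Cossart--Piltant \cite{CossartPiltant_ResolutionOfSingularitiesOfThreefoldsII}, so the hypothesis can be dispensed with in that range.

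The main obstacle is not mathematical but bibliographic and notational: one must carefully match the conventions of the primary sources (which phrase their results variously in terms of \'etale cohomology with twisted coefficients, logarithmic Hodge--Witt sheaves, or Milnor $K$-theory) with the uniform $H^{i,i-1}(-,\Z/2)$ notation used here, and check that the Kato differentials in those sources agree with the $\partial_v$ maps of Proposition \ref{prop:local_cohomology}. Once these identifications are in place, the theorem follows by direct citation without further new argument.
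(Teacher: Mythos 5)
Your overall route is the same as the paper's: identify the map in the statement with the first differential of Kato's complex (this is the simplification that $X$, being integral of dimension $d$, contributes only its generic point in degree $d$, so exactness of the complex at that spot is exactly the stated injectivity), check that $\partial_v$ through the henselisation agrees with Kato's residue map, and then quote the known cases of Kato's conjecture: Kerz--Saito \cite{kerzSaito} when $\kar\F$ is odd, and Jannsen \cite{JannsenHassePrinciples} in characteristic two under resolution of singularities. Up to that point your proposal matches the paper's proof.

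The gap is in your treatment of the case $\kar\F=2$, $d\leq 3$. You claim that Cossart--Piltant's resolution of threefolds \cite{CossartPiltant_ResolutionOfSingularitiesOfThreefoldsII} lets you ``dispense with'' Hypothesis~\ref{hyp:resolution_of_singularities} in that range and then still invoke Jannsen's conditional theorem. This does not work as stated: the hypothesis Jannsen's theorem needs (and as formulated in Hypothesis~\ref{hyp:resolution_of_singularities}) has two parts, and the second part --- that every smooth affine variety over the finite field embeds as an open subvariety of a smooth projective variety with simple normal crossings complement --- is an \emph{embedded} resolution/compactification statement which Cossart--Piltant's theorem (non-embedded resolution of threefolds) does not provide; moreover the hypothesis as stated is not restricted to varieties of dimension $\leq 3$, so one would additionally have to verify that Jannsen's argument for a $3$-dimensional $X$ only ever uses resolution in bounded dimension. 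The paper avoids this entirely: for $\kar\F=2$ and $d\leq 3$ it cites Suwa's unconditional result \cite[p.~270]{Suwa_ANoteOnGerstensConjecture} for the $p$-part of Kato's conjecture in low dimension, and uses Cossart--Piltant only elsewhere (in Lemma~\ref{lem:regular_model_after_odd_extn}, to produce smooth proper models). To repair your argument, either cite Suwa as the paper does, or supply a genuine proof that the full strength of Hypothesis~\ref{hyp:resolution_of_singularities} (including the SNC-compactification clause, in the relevant dimensions) follows from known resolution results in dimension $\leq 3$ --- which is not something \cite{CossartPiltant_ResolutionOfSingularitiesOfThreefoldsII} gives you off the shelf.
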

The residue field of a point $x$ as in the theorem is the function field of the subvariety of $X$ given by the closure of $x$, a finitely generated field extension of $\F$ of transcendence degree $d-1$.
\begin{proof}
  In \cite[§1]{kato}, a complex $C^0_2(X)$ is constructed. Three of the terms of this complex are the following:
  \[ \bigoplus_{x \in X_{d+1}} H^{d+2}(\F(x)) \to \bigoplus_{x \in X_d} H^{d+1}(\F(x)) \to \bigoplus_{x \in X_{d-1}} H^d(\F(x)) \]
  Here $X_j$ denotes the set of points $x \in X$ whose closure has dimension $j$.
  Since $X$ is integral of dimension $d$, there are no points with closure of dimension $d+1$, and precisely one point with closure of dimension $d$.
  The three terms of the complex hence simplify in the following way:
  \[ 0 \to H^{d+1}(\F(X)) \to \bigoplus_{x \in X_{d-1}} H^d(\F(x)) \]
  The transition map on the right-hand side is the one given in the statement of the theorem above.

  It is conjectured in \cite[Conjecture 0.3]{kato} that the complex is exact at the point under consideration.
  This is proven in \cite[Theorem 0.4]{kerzSaito} for $\kar \F$ odd, in \cite[Theorem 0.10]{JannsenHassePrinciples} for $\kar \F=2$ under the assumption of resolution of singularities over finite fields of characteristic two, and in \cite[p.~270]{Suwa_ANoteOnGerstensConjecture} if $\kar \F=2$ and $d \leq 3$.
This proves the claim.
\end{proof}

\begin{theorem}[Kerz--Saito]\label{thm:KerzSaitoMixedChar}
  Let $X$ be a regular integral scheme of dimension $d>0$ proper and flat over $\operatorname{Spec}(\Z[1/2])$ with function field $K$ (automatically of transcendence degree $d-1$ over $\Q$).
  Then the map
  \[ H^{d+1}(K) \to \bigoplus_x H^d(x) \oplus \bigoplus_y H^{d+1}(y) \oplus \bigoplus_z H^{d+1}(z) \]
  is injective.
  Here the first summand is as in Theorem \ref{thm:KerzSaitoFiniteField}, $y$ varies over points of the base change $X_\R$ whose closure has dimension $d-1$, $z$ varies over points of $X_{\Q_2}$ whose closure has dimension $d-1$, and we write $H^d(x)$(and likewise for $y$, $z$) for the cohomology of the residue field of $x$ as before.
  
  The map is given as a direct sum of maps $H^{d+1}(K) \to H^d(x)$ as in Theorem \ref{thm:KerzSaitoFiniteField}, and restriction maps from $H^{d+1}(K)$ into $H^{d+1}(y)$ and $H^{d+1}(x)$.
\end{theorem}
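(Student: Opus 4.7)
The plan is to parallel the argument for Theorem \ref{thm:KerzSaitoFiniteField} in the mixed characteristic setting, invoking the Kato conjecture for arithmetic schemes as proved by Kerz and Saito in characteristic away from $2$.

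First I would recall Kato's construction from \cite[\S 1]{kato} of a complex $C^0_2(X)$ attached to an arithmetic scheme. For our $X$, which is regular integral of dimension $d$, proper and flat over $\operatorname{Spec}(\Z[1/2])$, the relevant portion of the complex takes the shape
\[0 \to H^{d+1}(K) \to \bigoplus_{x \in X_{d-1}} H^d(\F(x)) \oplus T_\infty \oplus T_2\]
after simplification using that $X$ has a unique generic point with residue field $K$ and no points of larger closure dimension. Here the first direct sum accounts for horizontal divisors on $X$, while the extra terms $T_\infty$ and $T_2$ are the additional contributions from the archimedean and $2$-adic places, which are absent from $X$ itself since we are proper only over $\operatorname{Spec}(\Z[1/2])$. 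By construction these are direct sums of $H^{d+1}$ of residue fields of codimension-$1$ points of the base changes $X_\R$ and $X_{\Q_2}$, matching the terms $\bigoplus_y H^{d+1}(\F(y))$ and $\bigoplus_z H^{d+1}(\F(z))$ in the statement. The transition maps decompose componentwise as the composition of restriction to a henselisation followed by $\partial_v$ for the horizontal terms, and as pure restriction for the archimedean and $2$-adic terms, which matches the description in the theorem.

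Next I would invoke the theorem of Kerz and Saito establishing exactness of Kato's complex at this term when $2$ is invertible on the base, see \cite[Theorem 8.1]{kerzSaito}. Since our base is $\operatorname{Spec}(\Z[1/2])$, only odd residue characteristics appear, so Kerz--Saito applies directly and yields the required injectivity. No characteristic-two hypothesis (resolution of singularities or otherwise) is needed here, in contrast to Theorem \ref{thm:KerzSaitoFiniteField}, because the base has no characteristic-two points.

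The main obstacle I expect is identifying the precise form of Kato's complex over $\operatorname{Spec}(\Z[1/2])$ with that in the statement; in particular, making explicit how the archimedean and $2$-adic contributions are encoded requires unwinding the construction in \cite[\S 1]{kato}, where one may view them as arising from ``missing'' vertical fibers at $\infty$ and at $2$ which have to be added back after compactification. Once this bookkeeping is in place, the result is a direct application of Kerz--Saito.
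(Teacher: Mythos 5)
Your proposal follows essentially the same route as the paper: identify the displayed map as the differential out of the degree-$d$ term of Kato's modified (mapping cone) complex from \cite[Conjecture 0.5]{kato} — the paper's $\hat{C}_2(X)$, whose degree-$j$ term is $\bigoplus_{x \in X_j} H^{j+1}(x) \oplus \bigoplus_{x \in (X_\R)_j} H^{j+2}(x) \oplus \bigoplus_{x \in (X_{\Q_2})_j} H^{j+2}(x)$ — note that the degree-$d$ term reduces to $H^{d+1}(K)$ since $X_\R$ and $X_{\Q_2}$ have dimension $d-1$, and then invoke Kerz--Saito's proof of Kato's conjecture with $2$ inverted (the paper cites \cite[Theorem 0.5]{kerzSaito}). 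One bookkeeping slip to fix: the archimedean and $2$-adic terms $T_\infty$, $T_2$ are indexed by the points of $X_\R$ and $X_{\Q_2}$ whose closure has dimension $d-1$, i.e.\ by the finitely many generic points of the components of these $(d-1)$-dimensional fibers, \emph{not} by their codimension-$1$ points as you write; with codimension-$1$ points the grading of Kato's complex would be off by one and the maps could not be plain restriction maps, since $K$ does not embed into those residue fields, whereas it does embed into the residue fields of the generic points, which is exactly what makes the description of the map in the statement (restriction to overfields of $K$) correct.
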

Note that because $X_\R$ and $X_{\Q_2}$ have dimension $d-1$, there are only finitely many points $y$ and $z$ with closure of dimension $d-1$, and for each of these we have an embedding of $K$ into the residue field.

Furthermore, as we are working over $\Z[1/2]$, none of the residue fields occurring have characteristic two.
\begin{proof}
  Similarly to the proof of the preceding theorem, Kato in \cite[Conjecture 0.5]{kato} considers a certain mapping cone complex $\hat{C}_2(X)$ whose term in degree $j$ is the following:
  \[ \bigoplus_{x \in X_j} H^{j+1}(x) \oplus \bigoplus_{x \in (X_{\R})_j} H^{j+2}(x) \oplus \bigoplus_{x \in (X_{\Q_2})_j} H^{j+2}(x) \]
  Since $X$ has dimension $d$ and $X_\R$, $X_{\Q_2}$ have dimension $d-1$,
  the complex is zero in degree $d+1$, and in degree $d$ we only have the contribution of the generic point of $X$ and hence obtain $H^{d+1}(K)$.

  It is now proven in \cite[Theorem 0.5]{kerzSaito}, after \cite[Conjecture 0.5]{kato}, that the complex is exact in degree $d$, proving the claim.
\end{proof}

\begin{proof}[Proof of Theorem \ref{thm:cohom_loc_glob}]
  If $L/K$ is a finite extension of odd degree, then the restriction map $H^{d+1}(K) \to H^{d+1}(L)$ is injective by basic Galois cohomology.
  Assuming that the theorem was true for $L$, for any $\alpha \in H^{d+1}(K)$ as in the hypothesis, the restriction $\operatorname{res}_{L/K}(\alpha)$ to $H^{d+1}(L)$ is non-zero, so by applying the theorem for $L$ there exists a suitable valuation $v$ on $L$ such that $\partial_v\operatorname{res}_{L/K}(\alpha) \in H^d(Lv)$ is non-zero.
  Using the commutative diagram from Proposition \ref{prop:local_cohomology}(3), we find that $\partial_v\alpha \in H^d(Kv)$ is non-zero.
  
  It follows that if the theorem is true for $L$ then it is also true for $K$.
  Hence, by applying Lemma \ref{lem:regular_model_after_odd_extn} and replacing $K$ with $L$, we may assume that there is an integral smooth projective variety $X$ over $\Z[1/2]$ or $\F_p$ with function field $K$.

  In positive characteristic, the statement now follows immediately from Theorem \ref{thm:KerzSaitoFiniteField}.
  In characteristic zero, we consider a cohomology class $\alpha \in H^{d+1}(K)$ which vanishes when restricted to any overfield of $K$ embedding either $\R$ or $\Q_2$.
  In the notation of Theorem \ref{thm:KerzSaitoMixedChar}, this means that the restriction of $\alpha$ to $H^{d+1}(y)$ and $H^{d+1}(z)$ is trivial for all $y$ and $z$.
  Hence the injectivity statement of Theorem \ref{thm:KerzSaitoMixedChar} proves the claim.
\end{proof}

\begin{lemma}
  Let $K$ be finitely generated of Kronecker dimension $d\geq 0$, and let $\alpha \in H^{d+1}(K)$ be non-zero.
  If $\kar K = 0$, assume that $\alpha = (a_0) \cup (a_1) \cup \gamma$ for some $\gamma \in H^{d-1}(K)$ and $a_0, a_1 \in \Q^\times$ such that the Pfister form $\llangle a_0, a_1\rrsquare$ is isotropic over $\Q_2$ and $\R$.
  If $\kar K = 2$, assume that $d \leq 3$ or resolution of singularities holds over finite fields of characteristic two.

  Then there exists a valuation $v$ on $K$ of archimedean rank $d$ such that $\alpha$ is non-zero under the restriction to the henselisation $K_v$, with residue characteristic not two unless $\kar K = 2$.
\end{lemma}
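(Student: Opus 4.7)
The plan is induction on the Kronecker dimension $d$. The base case $d = 0$ is immediate: $K$ is finite, and the trivial valuation on $K$ (of archimedean rank $0$) has $K_v = K$, so $\alpha$ remains non-zero under restriction.

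For the inductive step at $d \geq 1$, I would first invoke Theorem \ref{thm:cohom_loc_glob} to produce a valuation $v$ on $K$ with $vK = \Z$, residue field $Kv$ finitely generated of Kronecker dimension $d-1$, $\kar Kv \neq 2$ unless $\kar K = 2$, and $\partial_v\alpha \neq 0$ in $H^d(Kv)$. In characteristic two the hypotheses of that theorem match ours; in characteristic zero, the assumed isotropy of $\llangle a_0, a_1\rrsquare$ over $\R$ and $\Q_2$ forces, via Fact \ref{fact:pfister_forms_and_cohomology}, the vanishing of $(a_0) \cup (a_1)$ over those completions, and hence of $\alpha = (a_0) \cup (a_1) \cup \gamma$ over any overfield embedding one of them. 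I would then apply the inductive hypothesis to $\partial_v\alpha \in H^d(Kv)$; this requires a short case analysis on $\kar Kv$. The cases $\kar Kv$ odd positive are trivial, and if $\kar Kv = 2$ then $\kar K = 2$ with $d \leq 3$, so $d - 1 \leq 2$ (or the resolution hypothesis is inherited). If $\kar Kv = 0$, then $\kar K = 0$ and $v|_\Q$ is trivial, so $a_0, a_1 \in \Q^\times$ are $v$-units; two applications of Proposition \ref{prop:local_cohomology}(4) give $\partial_v\alpha = (a_0) \cup (a_1) \cup \partial_v\gamma$ with the same $a_0, a_1$, preserving the Pfister isotropy hypothesis. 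The induction then yields a valuation $w$ on $Kv$ of archimedean rank $d - 1$, of residue characteristic not $2$ whenever $\kar Kv = 0$, such that $(\partial_v\alpha)|_{(Kv)_w} \neq 0$.

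To finish, I would take the composite valuation $v' = v \circ w$ on $K$, of archimedean rank $d$ with residue field $(Kv)w$ of characteristic not $2$ whenever $\kar K = 0$, and check that $\alpha|_{K_{v'}} \neq 0$. The key valuation-theoretic input is that $K_{v'}$ is also henselian at the rank-$1$ coarsening $v$ of $v'$, with residue field the henselisation $(Kv)_w$, so that the canonical map $K_v \hookrightarrow K_{v'}$ is a filtered colimit of finite unramified extensions with respect to $v$. Naturality of the residue map $\partial_v$ under unramified extensions (Proposition \ref{prop:local_cohomology}(3) with $e = 1$, taken in the limit) then gives a commutative square relating $\partial_v$ on $K_v$ and on $K_{v'}$, and chasing $\alpha$ through it shows its image in $H^d((Kv)_w)$ equals the non-zero class $(\partial_v\alpha)|_{(Kv)_w}$; hence $\alpha|_{K_{v'}} \neq 0$. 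The main obstacle is this last compatibility check for henselisations of composite valuations, but it is a routine application of standard valuation theory.
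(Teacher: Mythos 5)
Your proof is correct and follows essentially the same route as the paper: induction on $d$ with Theorem \ref{thm:cohom_loc_glob} supplying the rank-one valuation, Proposition \ref{prop:local_cohomology}(4) preserving the special form of $\alpha$ when the residue field has characteristic zero, and the composite valuation together with the unramified compatibility of $\partial_v$ from Proposition \ref{prop:local_cohomology}(3) finishing the diagram chase. The only cosmetic difference is that the paper also records that the residue maps are isomorphisms (via excellency, Lemma \ref{lem:excellent_dvr}), whereas your argument, like the paper's chase in substance, only needs commutativity and the fact that $\partial_v$ is defined on the henselisation of the composite valuation.
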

Theorem \ref{thm:pfister_loc_glob} is an immediate consequence.
\begin{proof}
  First consider the case of $\kar K > 0$. We proceed by induction on $d$.
  The case $d=0$ is clear, as we can take the trivial valuation for $v$.

  For $d>0$, Theorem \ref{thm:cohom_loc_glob} shows that there is a valuation $v$ on $K$, with value group $\Z$ and finitely generated residue field $Kv$ of Kronecker dimension $d-1$, such that $\partial_v \alpha$ does not vanish in $H^d(Kv)$. The valuation ring of $v$ is excellent by Lemma \ref{lem:excellent_dvr}.

  By induction hypothesis, there is a valuation $w$ on $Kv$ of rank $d-1$ such that the element $\partial_v\alpha \in H^d(Kv)$ does not vanish when restricted to $(Kv)_w$.
  Consider now the composite valuation $w \circ v$ on $K$, which is of archimedean rank $d$. The henselisation $K_{w \circ v}$ is the unramified extension of $K_v$ with residue field $(Kv)_w$.
  By the compatibility of $\partial_v$ with unramified extensions, Proposition \ref{prop:local_cohomology}(3), the following diagram commutes.
  \[\xymatrix{
      H^{d+1}(K_v) \ar[r] \ar[d] & H^d(Kv) \ar[d] \\
      H^{d+1}(K_{w \circ v}) \ar[r] & H^d((Kv)_w)
  }\]
  As the horizontal arrows are isomorphisms by Proposition \ref{prop:local_cohomology} the restriction of $\alpha$ does not vanish in $H^{d+1}(K_{w \circ v})$, proving the claim.

  In characteristic zero, we follow the same inductive approach.
  There are only two points more to check.
  Firstly, for any $\alpha$ of the given form $(a_0) \cup (a_1) \cup \gamma$, $\alpha$ vanishes when restricted to any overfield which embeds $\R$ or $\Q_2$ since $(a_0) \cup (a_1)$ does (as the Pfister form $\llangle a_0, a_1\rrsquare$ becomes isotropic by assumption), so Theorem \ref{thm:cohom_loc_glob} is applicable.
 Secondly, for $\alpha$ of the same form, if a valuation $v$ on $K$ with value group $\Z$ has residue field of characteristic zero, then $\partial_v(\alpha) = (a_0) \cup (a_1) \cup \partial_v(\alpha)$ again has the required form by Proposition \ref{prop:local_cohomology}(4), so we can continue inductively.
\end{proof}

\section{Building subrings}
\label{sec:building_subrings}

Let $K$ be a finitely generated field of Kronecker dimension $d > 1$.
If $\kar K = 2$, assume that $d \leq 3$ or resolution of singularities holds over finite fields of characteristic two.
Let $K_2 \subseteq K$ be a subfield of Kronecker dimension $d-1$.
In this section we construct a family of subrings of $K$ containing $K_2$.

Let $a_0, \dotsc, a_d \in K^\times$  such that the following conditions are satisfied:
\begin{enumerate}
  \item[(i)] $a_{d-1}, a_d$ are contained in a global subfield $K_1$ of $K_2$;
  \item[(ii)] in characteristic zero, even $a_{d-1}, a_d \in \Q$, and the Pfister form $\llangle a_{d-1}, a_d\rrsquare/\Q$ becomes isotropic over $\Q_2$ and $\R$.
\end{enumerate}
Write $\Delta_0$ for the set of (equivalence classes of) non-trivial valuations of $K_1$ over whose henselisation the Pfister form $\llangle a_{d-1}, a_d\rrsquare$ is anisotropic; this is finite by Lemma \ref{lem:pfister_over_global_fields}, and contains no valuations of mixed characteristic $(0,2)$ by (ii).

We can use Weak Approximation to choose $c \in K_1^\times$ such that the following condition is satisfied.
\begin{enumerate}
  \item[(iii)] For all $v \in \Delta_0$ we have $v(c) = 0$, and the reduction of the polynomial $X^2+X+c$ is irreducible over the residue field $K_1v$.
\end{enumerate}

Abbreviate $q = \llangle a_0, \dotsc, a_d \rrsquare$, and write $\Delta$ for the set of all (equivalence classes of) valuations on $K$ such that $q$ does not become isotropic over the henselisation.

The main result of this section is the following.
\begin{proposition}\label{prop:constructing_ring}
  Under conditions (i, ii, iii) we have
  \[ S_c(q/K) \cdot K_2^\times = \bigcap_{\stackrel{w \in \Delta}{w \text{ trivial on }K_1}} \mathcal{O}_w .\]
\end{proposition}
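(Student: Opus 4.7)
The plan is to prove the claimed set equality by two inclusions, both leveraging Corollary \ref{cor:S_loc_glob} and Proposition \ref{prop:local_S_computation} to bridge between the valuations $w$ trivial on $K_1$ appearing on the right-hand side and rank-$d$ valuations $v$ on $K$.

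For the forward inclusion I would fix $s \in S_c(q/K)$, $\lambda \in K_2^\times$, and $w \in \Delta$ trivial on $K_1$, and refine $w$ to a rank-$d$ valuation $v$ on $K$ still in $\Delta$. Such a refinement is built by applying Theorem \ref{thm:pfister_loc_glob} to the finitely generated residue field $Kw$, starting from the nonzero residue $\partial_w \alpha \in H^{d+1-\mathrm{rank}(w)}(Kw)$ of the cohomology class $\alpha$ of $q$ (nonzero by Proposition \ref{prop:local_cohomology}). Condition (i) ensures $v|_{K_1} \in \Delta_0$, and together with (ii), (iii) one verifies that the irreducibility of $X^2 + X + c$ persists over $Kv$, so Proposition \ref{prop:local_S_computation} applies and yields $s \in S_c(q/K_v) \subseteq \mathcal{O}_v$ via Corollary \ref{cor:S_loc_glob}. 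The refinement can be arranged so that $\lambda \in K_2^\times$ is a unit at $v$ as well, giving $s\lambda \in \mathcal{O}_v \subseteq \mathcal{O}_w$.

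For the reverse inclusion, given $y$ in the intersection, I seek $\lambda \in K_2^\times$ with $y/\lambda \in S_c(q/K)$. By Corollary \ref{cor:S_loc_glob}, it is enough to ensure $y/\lambda \in S_c(q/K_v)$ for every rank-$d$ valuation $v$ on $K$ over which $q$ is anisotropic. Any such $v$ has $v|_{K_1} \in \Delta_0$ (a finite set), and by Abhyankar its restriction $v|_{K_2}$ has rank exactly $d-1$, with finite residue field. Proposition \ref{prop:local_S_computation} reduces each condition to $v(\lambda) < v(y)$; since $\lambda \in K_2^\times$ this value depends only on the restriction $v|_{K_2}$, so the rank-$d$ valuations lying over a single $v|_{K_2}$ (parametrised by closed points of a curve over the finite residue field) impose only one condition on $\lambda$. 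The hypothesis $y \in \bigcap_w \mathcal{O}_w$ provides the required bounds on the ``leading coordinates'' of $v(y)$, via compositions of $v|_{K_2}$ with places of $K/K_2$ trivial on $K_1$, leaving only finitely many genuine conditions on $\lambda$. An approximation argument on the finitely generated field $K_2$ at finitely many rank-$(d-1)$ places then produces a suitable $\lambda \in K_2^\times$.

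The hardest part will be the reverse direction: controlling the infinite family of rank-$d$ valuations by grouping them according to their restriction to $K_2$, and carefully matching the hypothesis on $y$ at valuations trivial on $K_1$ with the ``leading'' coordinates of $v(y)$ that are not absorbed by $\lambda$.
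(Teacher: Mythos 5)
Your overall frame for the reverse inclusion (reduce via Corollary \ref{cor:S_loc_glob} to rank-$d$ valuations, and use Proposition \ref{prop:local_S_computation} to replace ``$y/\lambda \in S_c(q/K_v)$'' by ``$y/\lambda \in \mathfrak{m}_v$'') is exactly the paper's strategy, but the step where you produce a single $\lambda$ is where the real difficulty sits, and your sketch does not close it. The claim that the rank-$d$ valuations $v\in\Delta$ lying over a fixed restriction $v|_{K_2}$ ``impose only one condition on $\lambda$'' is unjustified: the condition at $v$ is $v(\lambda)+v(y)>0$, and while $v(\lambda)$ depends only on $v|_{K_2}$, the term $v(y)$ does not. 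The hypothesis $y\in\bigcap_w\mathcal{O}_w$ only controls $v(y)$ through the finest coarsening $w$ of $v$ trivial on $K_2$, i.e.\ modulo the convex hull of $v(K_2^\times)$; within that hull, $v(y)$ can vary without bound as the rank-$d$ extension of $v|_{K_2}$ varies (and when that coarsening is trivial the hypothesis gives nothing at all), so the surviving conditions are not indexed by finitely many places of $K_2$ and a weak-approximation argument over $K_2$ cannot be run as stated. This is precisely why the paper does something different: it proves the pointwise statement (Lemma \ref{lem:for_each_v_exists_a}), then exchanges quantifiers by a compactness argument in the constructible topology on the valuation space (closedness of $\Delta'=\{v\in\Delta: v(C)>0\}$), and finally combines the finitely many resulting elements $a_1,\dotsc,a_n\in K_2^\times$ into one $a$ using the norm form $\phi(x,y)=x^2+xy+cy^2$, exploiting condition (iii) that $X^2+X+c$ is irreducible in every relevant residue field so that $v(\phi(x,y))=2\min(v(x),v(y))$ on $\Delta'$. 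Some such mechanism (compactness plus the norm-form trick, or a substitute) is missing from your proposal.

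Your treatment of the forward inclusion also has a gap, and is in any case far more elaborate than needed. The refinement of $w$ to a rank-$d$ valuation in $\Delta$ rests on ``$\partial_w\alpha\neq 0$ by Proposition \ref{prop:local_cohomology}'', but anisotropy of $q$ over $K_w$ only gives nonvanishing of the \emph{restriction} of $\alpha$ to $H^{d+1}(K_w)$; the residue map $\partial_w$ has kernel $H^{d+1}(Kw)$, which need not vanish in characteristic zero (and in characteristic two the residue maps carry excellence hypotheses), so the refinement as described can fail, and Theorem \ref{thm:pfister_loc_glob} gives you no control to keep a prescribed $\lambda\in K_2^\times$ a unit at the refined valuation. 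None of this is necessary: since $w\in\Delta$ means $q$ is anisotropic over $K_w$ and $c\in K_1^\times\subseteq\mathcal{O}_w^\times$, Lemma \ref{lem:henselian_S_in_valn_ring} gives $S_c(q/K)\subseteq S_c(q/K_w)\subseteq\mathcal{O}_w$ directly, and $K_2^\times\subseteq\mathcal{O}_w^\times$ finishes the inclusion in one line. (Note also that the intersection must be taken over $w\in\Delta$ trivial on $K_2$, as in the paper's lemmas and proof -- the ``$K_1$'' in the displayed statement is a typo; with $w$ only trivial on $K_1$ the left-to-right inclusion would not even hold, since $K_2^\times\not\subseteq\mathcal{O}_w$ in general.)
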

Here we write $S_c(q/K) \cdot K_2^\times$ for the set products of an element of $S_c(q/K)$ and an element of $K_2^\times$.
Note that the right-hand side in Proposition \ref{prop:constructing_ring} is a subring of $K$ containing $K_2$.

By Lemma \ref{lem:pfister_over_global_fields_special_form} we may assume, by replacing $a_{d-1}, a_d$ by other elements of $K_1$ in a way that preserves (the isomorphism type of) $\llangle a_{d-1}, a_d\rrsquare/K_1$, that we have $v(a_d) \geq 0$ for all $v \in \Delta_0$.
Choose $C \in K_1^\times$ such that $v(C) > 0$ for all $v \in \Delta_0$.
\begin{lemma}\label{lem:for_each_v_exists_a}
  Let $x \in \bigcap_{\stackrel{w \in \Delta}{w \text{ trivial on }K_2}} \mathcal{O}_w$ and $v \in \Delta$ with $v(C) > 0$.
  There exists $a \in K_2^\times$ such that $ax \in \mathfrak{m}_v$.
\end{lemma}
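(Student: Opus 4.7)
The plan is to reduce to finding $a \in K_2^\times$ with $v(a) > -v(x)$. The key issue is that $v(K_2^\times)$ need not be cofinal in $vK$, so the hypothesis on $x$ must be invoked through a coarsening argument.

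First I would establish that $v$ is non-trivial on $K_2$. Since $C \in K_1^\times$ and $v(C) > 0$, the restriction $v|_{K_1}$ is non-trivial. Moreover, $q = \llangle a_0, \dotsc, a_{d-2}\rrsquare \otimes \llangle a_{d-1}, a_d\rrsquare$, so since $q$ is anisotropic over $K_v$ while Pfister forms are hyperbolic once isotropic, the form $\llangle a_{d-1}, a_d\rrsquare$ is itself anisotropic over $K_v$, and hence over $(K_1)_{v|_{K_1}}$. Therefore $v|_{K_1} \in \Delta_0$, and in particular $v|_{K_2}$ is non-trivial. In the easy case $v(x) \geq 0$, one takes $a = 1$ if $v(x) > 0$, or any $a \in K_2^\times$ of positive value if $v(x) = 0$, so I may assume $v(x) < 0$ from now on.

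For the remaining case, let $H \subseteq vK$ be the convex hull of $v(K_2^\times)$ and $v'$ the corresponding coarsening of $v$; by construction $v'$ is trivial on $K_2$. If $H = vK$, then $v(K_2^\times)$ is cofinal in $vK$ and one can pick $a \in K_2^\times$ with $v(a) > -v(x)$ directly. Otherwise $v'$ is a non-trivial valuation on $K$. Since $K_v$ is automatically henselian with respect to the coarsening $v'$, the minimal henselisation $K_{v'}$ embeds into $K_v$; consequently $q$ remains anisotropic over $K_{v'}$ and so $v' \in \Delta$. The hypothesis on $x$ now gives $v'(x) \geq 0$, which combined with $v(x) < 0$ forces $-v(x) \in H$. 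By the definition of the convex hull there is $k \in v(K_2^\times)$ with $k \geq -v(x)$, and adding a strictly positive element of $v(K_2^\times)$ (available by non-triviality of $v|_{K_2}$) makes the inequality strict; the corresponding $a \in K_2^\times$ then satisfies $v(ax) > 0$, i.e.\ $ax \in \mathfrak{m}_v$.

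The chief obstacle is precisely the cofinality issue: without the hypothesis on $x$, an element with $-v(x) \notin H$ would obstruct the construction of $a$. The hypothesis is engineered so that the non-trivial coarsening $v'$, which is trivial on $K_2$ and lies in $\Delta$, is exactly what is needed to exclude this bad case.
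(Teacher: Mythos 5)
Your proof is correct and follows essentially the same route as the paper: pass to the coarsening of $v$ obtained by quotienting out the convex hull of $v(K_2^\times)$, note that it is trivial on $K_2$ and lies in $\Delta$ because its henselisation embeds into $K_v$, and use the hypothesis on $x$ to force $v(x)$ into that convex hull. The extra case splits (the detour through $\Delta_0$, and treating $H = vK$ and $v(x)\geq 0$ separately) are harmless but not needed; the paper handles them uniformly.
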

\begin{proof}
  The valuation $v$ is non-trivial on $K_2$ since $v(C) > 0$.
  Let $w$ be the finest coarsening of $v$ trivial on $K_2$, i.e.\ given by quotienting the value group of $v$ by the convex hull of the subgroup $v(K_2^\times)$; this may be trivial.
  Since $K_w$ embeds into $K_v$ and therefore $w \in \Delta$, we have $w(x) \geq 0$ by assumption.
  If $w(x) > 0$, then $v(x) > 0$, so $a=1$ works.
  Otherwise, $w(x) = 0$ and hence $v(x)$ is in the convex hull of $v(K_2^\times)$, so there exists an $a \in K_2^\times$ with $v(ax) > 0$, as desired.
\end{proof}

We now exchange the quantifiers.
\begin{lemma}\label{lem:exists_a_for_all_v}
  Let $x \in \bigcap_{\stackrel{w \in \Delta}{w \text{ trivial on }K_2}} \mathcal{O}_w$.
  There exists $a \in K_2^\times$ such that for all $v \in \Delta$ with $v(C) > 0$ we have $ax \in \mathfrak{m}_v$.
\end{lemma}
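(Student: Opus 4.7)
The plan is to exchange the quantifiers from Lemma \ref{lem:for_each_v_exists_a} by a patch-topological compactness argument, followed by a combination step. Write $W = \{v \in \Delta : v(C) > 0\}$, viewed inside the Zariski--Riemann space $X$ of valuations on $K$ equipped with the constructible (patch) topology, under which $X$ is compact Hausdorff. I want to argue that $W$ is patch-closed: the condition $\{v : v(C) > 0\}$ is a basic patch-clopen, while $\Delta$ is patch-closed because its complement (where $q$ is isotropic over $K_v$) is a union of patch-open conditions of the shape $\{v : v(q(\mathbf{x})) > \min_i v(b_i x_i^2)\}$ for non-trivial tuples $\mathbf{x}$, using Lemmas \ref{lem:isotropic_in_henselisation_char_not_two} and \ref{lem:isotropic_in_henselisation_char_two}. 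Hence $W$ is patch-compact.

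For each $a \in K_2^\times$, the set $\widetilde U_a := \{v \in X : v(ax) > 0\}$ is patch-clopen (it is the intersection of the basic clopens $\{ax \in \mathcal{O}_v\}$ and $\{(ax)^{-1} \notin \mathcal{O}_v\}$). By Lemma \ref{lem:for_each_v_exists_a} the family $\{\widetilde U_a\}_{a \in K_2^\times}$ covers $W$, so patch-compactness extracts a finite subcover $W \subseteq \widetilde U_{a_1} \cup \dotsb \cup \widetilde U_{a_n}$ for some $a_1, \dotsc, a_n \in K_2^\times$.

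The remaining and most delicate step is to combine the $a_i$'s into a single $a \in K_2^\times$ with $\widetilde U_a \supseteq W$. The naive candidates fail: $a = \prod_i a_i$ gives $v(a) = \sum_i v(a_i)$ and $a = \sum_i a_i$ gives $v(a) \geq \min_i v(a_i)$, whereas what we need amounts to a pointwise ``$v(a) \geq \max_i v(a_i)$'' behaviour on $W$, a maximum-of-valuations operation not accessible through field arithmetic. I would remedy this by exploiting that $\{v|_{K_1} : v \in W\} \subseteq \Delta_0$, a finite set: by weak approximation on the global field $K_1$ pick an auxiliary $e \in K_1^\times$ with $v_0(e)$ simultaneously large for every $v_0 \in \Delta_0$, and take $a = \sum_i e^{N_i} a_i$ with integer exponents $N_1 \ll N_2 \ll \dotsb \ll N_n$ growing fast enough that at every $v \in W$ the $n$ terms $e^{N_i} a_i$ have pairwise distinct $v$-valuations and the term of smallest $v$-valuation corresponds to an index $i$ with $v(a_i x) > 0$, yielding $v(ax) = v(e^{N_i} a_i x) > 0$.

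The main obstacle lies in this last paragraph: the integers $N_i$ must be chosen uniformly over the potentially infinite set $W$, which requires showing that the pairwise discrepancies $v(a_i) - v(a_j)$, reduced modulo the $\mathbf{Z}$-subgroup $v|_{K_1}(K_1^\times)$, are bounded uniformly as $v$ varies over $W$. This uniform boundedness is again obtained by a patch-compactness argument combined with the finiteness of $\Delta_0$, which controls the residue data of the valuations in $W$.
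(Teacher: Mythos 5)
Your first half (patch-compactness of $\Delta' = \{v \in \Delta : v(C) > 0\}$, clopenness of $\{v : v(ax) \in \mathfrak{m}_v\}$, finite subcover giving $a_1, \dotsc, a_n \in K_2^\times$) is exactly the paper's argument, except that your justification of closedness is too quick: $\Delta$ itself is not shown to be closed, only $\Delta'$ is, and the paper's proof of that (the unnamed lemma preceding this one) needs more than Lemmas \ref{lem:isotropic_in_henselisation_char_not_two} and \ref{lem:isotropic_in_henselisation_char_two} quoted blindly -- those criteria require residue characteristic not two (resp.\ $v(a_d) \geq 0$ in characteristic two), which is why one first confines $\Delta'$ to the clopen locus where $v(c)=0$, $v(1/2)=0$ and $v|_{K_1} \in \Delta_0$. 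This part is repairable (or simply citable).

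The genuine gap is in your combination step, and it stems from the assertion that a pointwise ``$v(a) \geq \max_i v(a_i)$'' is ``not accessible through field arithmetic''. It is accessible, and this is precisely what condition (iii) on $c$ was designed for: for every $v \in \Delta'$ the polynomial $X^2+X+c$ is irreducible over the residue field, so the binary form $\phi(x,y) = x^2+xy+cy^2$ satisfies $v(\phi(x,y)) = 2\min(v(x),v(y))$ exactly, in \emph{any} value group; nesting $\phi$ on the $a_i^{-1}$ and inverting then produces a single $a \in K_2^\times$ with $v(a) \geq \max_i v(a_i)$ for all $v \in \Delta'$ simultaneously (this is the trick from \cite[Lemma 4.2]{adfApproximation}). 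Your substitute $a = \sum_i e^{N_i} a_i$ cannot work: valuations in $\Delta$ may have value groups of rank up to $d$, whereas $v(e)$ lies in the rank-$\leq 1$ subgroup generated by $v(K_1^\times)$. The discrepancies $v(a_ix)-v(a_jx)$ live in the convex hull of $v(K_2^\times)$ and can be of strictly higher archimedean class than $v(e)$, so no choice of integers $N_i$ makes $N_i v(e)$ dominate them; moreover, with $v(e) > 0$ and increasing $N_i$ the term of smallest valuation is forced towards small $i$, with no mechanism tying it to an index for which $v(a_ix)>0$. The final claim that the discrepancies are ``bounded uniformly modulo $v|_{K_1}(K_1^\times)$'' is not meaningful uniformly across valuations with different (higher-rank) value groups and is not something patch-compactness plus finiteness of $\Delta_0$ yields; as it stands this step fails, and the norm-form trick above is the missing idea.
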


To prove this lemma, we use a topological argument.
Consider the space $S_{\mathrm{val}}$ of equivalence classes of valuations on $K$, and endow it with the \emph{constructible topology}, i.e.\ the coarsest topology in which sets of the form $\{ v \colon v(a) \geq 0 \}$ are clopen, where $a \in K$. This makes $S_{\mathrm{val}}$ a totally disconnected compact Hausdorff space, see \cite[Section 1]{HuberKnebusch} or \cite[Section 2]{adfApproximation}.

The use of a topology on the space of valuations on $K$ appears to be new in the current context, although one may wish to compare it to the use of the Zariski topology on the space of valuations in \cite[Section 2]{anArithmeticProofOfPopsTheorem}.

Let $\Delta' = \{ v \in \Delta \colon v(C) > 0 \}$.
\begin{lemma}
  The subset $\Delta' \subseteq S_{\mathrm{val}}$ is closed and hence compact.
  For any $v \in \Delta$, we have $v(c) = 0$, and the polynomial $X^2+X+c$ is irreducible over the residue field of $v$.
\end{lemma}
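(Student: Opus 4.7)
The plan has three parts matching the three claims: closedness (and hence compactness) of $\Delta' \subseteq S_{\mathrm{val}}$, the integrality $v(c) = 0$, and irreducibility of $X^2+X+c$ over $Kv$.

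First, for closedness, the set $\{v : v(C) > 0\}$ is the complement of the subbasic clopen $\{v : v(C^{-1}) \geq 0\}$ and is hence itself clopen. For $\Delta$, Lemmas \ref{lem:isotropic_in_henselisation_char_not_two} and \ref{lem:isotropic_in_henselisation_char_two} rephrase ``$q$ is isotropic over $K_v$'' as the existence of a tuple $\vec x \in K^{2^k}$ satisfying an inequality $v(q(\vec x)) > \min_i v(\cdot)$, which for each fixed $\vec x$ is a finite Boolean combination of basic conditions $v(a) \geq 0$, hence clopen. The union over all such $\vec x$ is open, so $\Delta$ is closed, and $\Delta' = \Delta \cap \{v : v(C) > 0\}$ is a closed subset of the compact Hausdorff space $S_{\mathrm{val}}$, hence compact.

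Next, for $v(c) = 0$, I would fix $v \in \Delta$ and set $u = v|_{K_1}$. The universal property of henselisation embeds $(K_1)_u$ into $K_v$, and anisotropy of $q$ over $K_v$, via the factorisation $(a_0) \cup \dotsc \cup (a_d) = ((a_0) \cup \dotsc \cup (a_{d-2})) \cup ((a_{d-1}) \cup (a_d))$ of the cup-product class together with Fact \ref{fact:pfister_forms_and_cohomology}, forces anisotropy of $\llangle a_{d-1}, a_d\rrsquare$ over $K_v$ and hence over the subfield $(K_1)_u$. So either $u$ is trivial on $K_1$ (giving $v(c) = 0$ automatically, as $c \in K_1^\times$) or $u \in \Delta_0$ (giving $u(c) = 0$ by condition (iii)); in both cases $v(c) = 0$.

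The main obstacle will be the irreducibility of $X^2+X+c$ over $Kv$, which I will prove by contradiction. If $\bar\alpha \in Kv$ were a root of $X^2+X+\bar c$, then condition (iii) implies $X^2+X+c$ is irreducible over $K_1$ itself (any $K_1$-root is forced to be $u$-integral at every $u \in \Delta_0$ and would reduce to a root contradicting (iii)), so in particular $\bar c \neq 1/4$ and $\bar\alpha$ is a simple root; Hensel's lemma in the henselian $K_v$ then lifts $\bar\alpha$ to a root $\alpha \in K_v$, embedding the quadratic extension $L := K_1[X]/(X^2+X+c)$ into $K_v$. To reach a contradiction, I will show $\llangle a_{d-1}, a_d\rrsquare$ is isotropic over the global field $L$ (hence over $K_v$, forcing $q$ isotropic): by Lemma \ref{lem:global_fields_local_global} this reduces to isotropy over every non-trivial henselisation $L_{u'}$ and every real completion. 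Setting $u := u'|_{K_1}$: if $u \notin \Delta_0$, then $\llangle a_{d-1}, a_d\rrsquare$ is already isotropic over $(K_1)_u \subseteq L_{u'}$; if $u \in \Delta_0$, then (iii) and Hensel identify $L_{u'}$ as the unique unramified quadratic extension of $(K_1)_u$, so Lemma \ref{lem:isotropic_over_unramified_extn} delivers the isotropy (noting its characteristic-two hypothesis holds, since $K_1$ is a global function field of transcendence degree $1$ over $\F_2$). Real completions in characteristic zero are covered by condition (ii). The hard part is orchestrating this case analysis uniformly, in particular identifying $L_{u'}$ as the unramified quadratic extension for $u \in \Delta_0$ and verifying the characteristic-two technical hypothesis of Lemma \ref{lem:isotropic_over_unramified_extn}.
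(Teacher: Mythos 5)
Your arguments for the second and third claims are essentially the paper's: for $v\in\Delta$ the restriction $u=v|_{K_1}$ is trivial or lies in $\Delta_0$ (else $\llangle a_{d-1},a_d\rrsquare$ splits over $(K_1)_u\subseteq K_v$), giving $v(c)=0$; and if the reduction of $X^2+X+c$ were reducible over $Kv$, Hensel's Lemma embeds $K_1[X]/(X^2+X+c)$ into $K_v$, the extension is inert at all places of $\Delta_0$, and Lemma \ref{lem:global_fields_local_global} (together with Lemma \ref{lem:isotropic_over_unramified_extn} at the $\Delta_0$-places and condition (ii) at the real and dyadic places) splits $\llangle a_{d-1},a_d\rrsquare$ over that global field, hence over $K_v$, a contradiction. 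Your attention to the simple-root issue is even more careful than the paper; just note that excluding $\bar c=1/4$ in $Kv$ should be argued from condition (iii) at $u=v|_{K_1}\in\Delta_0$ (irreducibility of the \emph{reduction} over $K_1u$, whose residue of $c$ maps to $\bar c\in Kv$), or from $c\neq 1/4$ when $u$ is trivial; irreducibility of $X^2+X+c$ over $K_1$ alone does not control the residue of $c$.

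The genuine gap is in the closedness argument. Lemmas \ref{lem:isotropic_in_henselisation_char_not_two} and \ref{lem:isotropic_in_henselisation_char_two} are not unconditional: the first requires residue characteristic not two, the second requires $\kar F=2$ and $v(a_d)\geq 0$. So ``$q$ isotropic over $K_v$ iff some tuple satisfies the valuation inequality'' fails at general points of $S_{\mathrm{val}}$: in characteristic zero, at a valuation of residue characteristic two the inequality does not imply isotropy over the henselisation (over $\Q_2$ the tuple $x=y=1$ satisfies $v(1-5)>0$, yet $X^2-5Y^2$ is anisotropic since $5$ is not a square in $\Q_2$), and in characteristic two the criterion is not even available where $v(a_d)<0$. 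Consequently your union of clopen witness sets may meet $\Delta$, and ``$\Delta$ is closed'' does not follow --- indeed the paper never asserts that $\Delta$ is closed, only $\Delta'$. The paper's proof shows instead that the complement of $\Delta'$ is open by cases: if $v(C)\leq 0$ this is immediate; if $v|_{K_1}\notin\Delta_0$, the open set of valuations restricting outside $\Delta_0$ avoids $\Delta'$ (such $w$ are either trivial on $K_1$, whence $w(C)=0$, or have $\llangle a_{d-1},a_d\rrsquare$ isotropic over $K_w$); only in the remaining case, where $v|_{K_1}\in\Delta_0$ guarantees $v(a_d)\geq 0$ (after the normalisation via Lemma \ref{lem:pfister_over_global_fields_special_form}) and residue characteristic not two unless $\kar K=2$, are the isotropy criteria invoked, on a neighbourhood where these side conditions persist. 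You need this case analysis, or some substitute such as a strong-Hensel form of the criterion valid at residue characteristic two, to repair the first claim.
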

\begin{proof}
  If $w$ is a valuation on $K$ such that $w(c) \neq 0$, then $w$ restricts to a non-trivial valuation on $K_1$ which is not in $\Delta_0$, and hence $\llangle a_{d-1}, a_d\rrsquare$ and therefore $q$ are isotropic over $K_w$, so $w \not\in \Delta$.
  Hence $\Delta'$ is contained in the clopen set $\{ v \in S_{\mathrm{val}} \colon v(c) = 0 \wedge v(C) > 0 \}$. Similarly, if $K$ is of characteristic zero, then $\Delta'$ is contained in the clopen set $\{ v \in S_{\mathrm{val}} \colon v(1/2) = 0 \}$ consisting of those valuations of residue characteristic not two.

  If $X^2+X+c$ becomes reducible over the residue field of $w$, then by Hensel's Lemma the henselisation $K_w$ embeds $K_1[X]/(X^2+X+c)$, which is unramified of degree two at all places in $\Delta_0$ by construction, so $\llangle a_{d-1}, a_d\rrsquare$ is isotropic over $K_1[X]/(X^2+X+c)$ by Lemma \ref{lem:global_fields_local_global}, hence $\llangle a_{d-1}, a_d\rrsquare$ and therefore $q$ are isotropic over $K_w$, and thus $w \not\in \Delta'$.

  Now let $v \in S_{\mathrm{val}} \setminus \Delta'$. We have to show that there is an entire open neighbourhood of $v$ disjoint from $\Delta'$. This is clear if $v(C) \leq 0$, in particular if $v$ is trivial on $K_1$.
  If the restriction of $v$ to $K_1$ is not in $\Delta_0$, then the same will be true for all valuations in an open neighbourhood of $v$, so this entire open neighbourhood will be disjoint from $\Delta'$.
  In particular, we may assume that $v(a_d) \geq 0$ and that the residue characteristic of $v$ is not two unless $\kar K = 2$.
  Now the criteria for isotropy given in Lemmas \ref{lem:isotropic_in_henselisation_char_not_two} and \ref{lem:isotropic_in_henselisation_char_two} are clearly open conditions.
\end{proof}

\begin{proof}[Proof of Lemma \ref{lem:exists_a_for_all_v}]
  For any given $a \in K_2^\times$, the set of $v \in \Delta'$ such that $ax \in \mathfrak{m}_v$ is open, and for any $v \in \Delta'$ there exists an $a \in K_2^\times$ with $ax \in \mathfrak{m}_v$ by Lemma \ref{lem:for_each_v_exists_a}.
  Hence by compactness of $\Delta'$ there are in fact finitely many $a_1, \dotsc, a_n \in K_2^\times$ such that for each $v \in \Delta'$ there exists an $i$ with $a_i x \in \mathfrak{m}_v$.

  Consider the function $\phi \colon K_2^\times \times K_2^\times \to K_2^\times$ given by $(x, y) \mapsto x^2 + xy + cy^2$.
  For any $v \in \Delta'$, the polynomial $X^2+X+c$ is irreducible over the residue field of $v$, so for any $x \in K_2^\times$ we have $v(\phi(x,1)) = \min(2v(x), 0)$, which by homogeneity implies that for all $x, y \in K_2^\times$ we have $v(\phi(x,y)) = 2\min(v(x), v(y))$.

  Now take $a = \phi(1, \phi(a_1^{-1}, \phi(a_2^{-1}, \dotsc )\dotsb))^{-1}$; this satisfies \[ v(a) \geq \max(v(a_1), \dotsc, v(a_n)) \] for all $v \in \Delta'$ by construction.
  Hence $ax \in \mathfrak{m}_v$ for all $v \in \Delta'$ as desired.
\end{proof}

The technique for the preceding proof is taken from \cite[Lemma 4.2]{adfApproximation}.

\begin{proof}[Proof of Proposition \ref{prop:constructing_ring}]
  The inclusion $\subseteq$ is clear, since for any $w \in \Delta$ trivial on $K_2$ we have $K_2^\times \subseteq \mathcal{O}_w$ and $S_c(q/K) \subseteq S_c(q/K_w) \cap K \subseteq \mathcal{O}_w$ by Lemma \ref{lem:henselian_S_in_valn_ring}.

  For the other inclusion, let $x \in \bigcap_{\stackrel{w \in \Delta}{w \text{ trivial on }K_2}}\mathcal{O}_w$. By Lemma \ref{lem:exists_a_for_all_v}, there exists $a \in K_2^\times$ such that $v(ax) > 0$ for all $v \in \Delta$ with $v(C) > 0$.

  Any $v \in \Delta$ of archimedean rank $d$ must restrict to a non-trivial valuation on $K_1$ since $K/K_1$ has transcendence degree $d-1$. Its restriction to $K_1$ must in fact be a valuation in $\Delta_0$, since otherwise the subform $\llangle a_{d-1}, a_d\rrsquare$ is isotropic over $K_v$.
  In particular, $v(C) > 0$, and so $ax \in \mathfrak{m}_v \subseteq S_c(q/K_v)$ by Proposition \ref{prop:local_S_computation}.

  By Corollary \ref{cor:S_loc_glob} we deduce that $ax \in S_c(q/K)$. Hence
  \[x \in a^{-1} S_c(q/K) \subseteq S_c(q/K) \cdot K_1^\times . \qedhere\]
\end{proof}

\section{Subrings finitely generated over a global field}
\label{sec:subrings_fin_generated}

Let $K$ be finitely generated of Kronecker dimension $d>1$, and if $\kar K = 2$ assume that $d \leq 3$ or resolution of singularities holds over finite fields of characteristic two.
Write $K_0$ for the relative algebraic closure of the prime field in $K$.
In characteristic zero, let $t_1=1$, and in positive characteristic let $t_1 \in K$ be transcendental over the prime field.

Pick a transcendence basis $t_2, \dotsc, t_d$ for $K/K_0(t_1)$ and let $R_0 = K_0[t_1, \dotsc, t_d]$.
Then $K$ is a finite extension of $K_0(t_1, \dotsc, t_d)$, the quotient field of $R_0$.

We wish to construct a finite ring extension of $R_0$ as an intersection of subrings of $K$ as in the last section.
Let $s \in K$ be any element integral over $R_0$.
\begin{definition}
Let \[ R(t_1,t_2, \dotsc, t_d, s) = \bigcap_{c, a_0, \dotsc, a_d \in K, K_2 \subseteq K} S_c(\llangle a_0, \dotsc, a_d\rrsquare/K) \cdot K_2, \]
where the intersection is over those $c, a_0, \dotsc, a_d, K_2$ such that $K_2 \subseteq K$ is relatively algebraically closed and of Kronecker dimension $d-1$ and $S_c(\llangle a_0, \dotsc, a_d\rrsquare/K)\cdot K_2$ is a subring of $K$ containing $R_0$ and $s$.
\end{definition}

\begin{remark}
  We are not requiring all the sets $S_c(\llangle a_0, \dotsc, a_d\rrsquare/K) \cdot K_1$ which we are intersecting to arise as in Section \ref{sec:building_subrings}; in particular, we do not guarantee that all of them are integrally closed, as would be expected from Proposition \ref{prop:constructing_ring}.

  We refrain from imposing this condition to make our later definability result in Proposition \ref{prop:definable} easier to prove.
  (Using the results of \cite{RumelyUndecidabilityGlobalFields}, one can in fact show that the conditions (i,ii,iii) imposed in Section \ref{sec:building_subrings} are first-order definable, but we avoid having to prove this.)
\end{remark}

\begin{lemma}\label{lem:not_integral_exists_bad_divisor}
  For every $x \in K$ not integral over $R_0$ there exists an equicharacteristic valuation $v$ on $K$, with value group isomorphic to $\Z$ and finitely generated residue field of Kronecker dimension $d-1$, such that $x \not\in \mathcal{O}_v \supseteq R_0$. 
\end{lemma}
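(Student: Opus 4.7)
The plan is to use the standard fact that a normal Noetherian domain is the intersection of its localisations at height-one primes. Let $R_0'$ denote the integral closure of $R_0$ in $K$. Since $R_0$ is a finitely generated algebra over a field ($K_0$ in characteristic zero, $\F_p$ in positive characteristic), it is excellent and Japanese, so $R_0'$ is a finite $R_0$-module, hence itself a Noetherian normal (Krull) domain with fraction field $K$. If $x \in K$ is not integral over $R_0$, then $x \notin R_0'$, and since $R_0' = \bigcap_{\mathfrak{p}} (R_0')_{\mathfrak{p}}$ where $\mathfrak{p}$ ranges over height-one primes, there is some height-one prime $\mathfrak{p}$ with $x \notin (R_0')_{\mathfrak{p}}$. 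The associated DVR gives a valuation $v$ on $K$ with value group $\Z$, and by construction $R_0 \subseteq R_0' \subseteq \mathcal{O}_v$ while $x \notin \mathcal{O}_v$.

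It remains to verify that $v$ has the stated residue-field properties and is equicharacteristic. The residue field $Kv$ is the fraction field of $R_0'/\mathfrak{p}$, which is finitely generated as a ring over the prime field (being a quotient of $R_0'$), and its Krull dimension is one less than that of $R_0'$. In positive characteristic $R_0'$ has Krull dimension $d$, so $Kv$ has transcendence degree $d-1$ over $\F_p$, matching Kronecker dimension $d-1$; and $\mathcal{O}_v \supseteq \F_p$ forces residue characteristic $p$, i.e.\ equicharacteristic. In characteristic zero $R_0 = K_0[t_2,\dotsc,t_d]$ has Krull dimension $d-1$, so $Kv$ has transcendence degree $d-2$ over $K_0$; as $K_0/\Q$ is algebraic and $K_0 \subseteq \mathcal{O}_v$ injects into $Kv$ (because $K_0$ is a field), $Kv$ has transcendence degree $d-2$ over $\Q$, hence Kronecker dimension $d-1$, and is again equicharacteristic since $\mathcal{O}_v \supseteq K_0 \supseteq \Q$.

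There is no serious obstacle here; the one point requiring minimal care is ensuring equicharacteristicity in the mixed-looking case of characteristic zero, which is handled automatically by the fact that $R_0$ already contains a field (namely $K_0$), so every quotient of $R_0'$ does too.
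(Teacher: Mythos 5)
Your proof is correct, but it follows a different route from the paper. The paper works directly with the UFD $R_0$: it takes the minimal polynomial of $x$ over $\operatorname{Frac}(R_0)$, notes its coefficients cannot lie in every localisation $(R_0)_{(y)}$ at a prime element $y$, so $x$ fails to be integral over some such discrete valuation ring $S$ of $\operatorname{Frac}(R_0)$, and then invokes Chevalley's extension theorem (in the form \cite[Corollary 3.1.4]{englerPrestel}) to get a valuation ring of $K$ dominating $S$ and omitting $x$; the value group $\Z$, the residue field being finitely generated of Kronecker dimension $d-1$ (via Krull's Hauptidealsatz applied to $R_0/(y)$), and equicharacteristicity then come from the restriction to $\operatorname{Frac}(R_0)$ being the $y$-adic valuation and $K/\operatorname{Frac}(R_0)$ being finite. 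You instead pass to the normalisation $R_0'$ of $R_0$ in $K$, use the Japanese property to see it is a finite $R_0$-module, and apply the Krull-domain decomposition $R_0' = \bigcap_{\operatorname{ht}\mathfrak{p}=1}(R_0')_{\mathfrak{p}}$ to find a height-one prime whose local ring excludes $x$. What your approach buys is that the valuation ring on $K$ is produced directly as a localisation of an affine normal domain, so the value group, the finite generation and dimension of the residue field, and equicharacteristicity are immediate from the dimension theory of affine domains, with no valuation-extension step; what it costs is the finiteness of the normalisation, which the paper avoids at this point (though it does invoke exactly this fact later, in the proof of Lemma \ref{lem:R}, citing \cite[Theorem 4.14]{Eisenbud_CommAlg}), whereas the paper's argument needs only the UFD structure of $R_0$ plus Chevalley, at the price of leaving the bookkeeping for the extension to $K$ implicit. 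Your dimension count in both characteristics and the observation that $K_0 \subseteq \mathcal{O}_v$ forces equicharacteristicity in characteristic zero are both accurate.
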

\begin{proof}
  Consider the minimal polynomial $f$ of $x$ over the quotient field of $R_0$. It does not have coefficients in $R_0$ as otherwise $x$ would be integral over $R_0$.
  Since $R_0$ is a unique factorisation domain, there exists a prime element $y$ of $R_0$ such that $f$ does not even have coefficients in the localisation $S$ of $R_0$ at the prime ideal $(y)$.
  Hence $x$ is not integral over $S$. Observe that $S$ is a discrete valuation ring with residue field $\operatorname{Frac}(R_0/(y))$; this residue field has Kronecker dimension $d-1$ since $\operatorname{Krull dim}(R_0/(y)) = \operatorname{Krull dim}(R_0)-1$ by Krull's Hauptidealsatz.

  Since $x$ is not integral over $S$, there exists a valuation ring $S'$ of $K$ dominating $S$ and not containing $x$ by \cite[Corollary 3.1.4]{englerPrestel}.
  The valuation $v$ associated to $S'$ is as desired.
\end{proof}

The following lemma may be compared to \cite[Fact 1.3(c)]{elemEquivVsIsomI} and \cite[Lemma 5.3]{poonenUniformDefnsInFinGenFields}.
\begin{lemma}\label{lem:exist_Pfister_forms_remaining_anisotropic}
  Let $K_1 = \Q$ or $K_1 = \F_p(t)$, $K_2/ K_1$ a finitely generated separable extension of transcendence degree $d-2$ (i.e.~of Kronecker dimension $d-1$), and $L_2/K_2$ finite and separable.
  There exists a $d$-fold Pfister form $q = \llangle a_1, \dotsc, a_d\rrsquare$ over $K_2$, with $a_{d-1}, a_d \in K_1$, which remains anisotropic over $L_2$.
  In characteristic zero we may even take $a_{d-1}, a_d \in \Q$ such that the form $\llangle a_{d-1}, a_d\rrsquare$ over $\Q$ is isotropic over $\R$ and $\Q_2$.
\end{lemma}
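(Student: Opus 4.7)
My plan is to prove this by induction on $d \geq 2$, equivalently on the transcendence degree $d - 2 \geq 0$ of $K_2/K_1$. The inductive strategy is to find a discrete valuation on $K_2$ that extends unramifiedly to $L_2$, apply the inductive hypothesis on the pair of residue fields (of transcendence degree one lower), and recombine via Lemma \ref{lem:anisotropic_over_dvf} with a uniformiser at the new level.

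For the base case $d = 2$, both $K_2$ and $L_2$ are global fields. By Chebotarev's density theorem applied to the Galois closure of $L_2/K_1$, infinitely many places $v_0$ of $K_1$ split completely in $L_2$, and in particular admit an extension $w$ to $L_2$ with $e(w/v_0) = f(w/v_0) = 1$; in characteristic zero I additionally require $v_0$ to have odd residue characteristic. Lemma \ref{lem:pfister_over_global_fields} then produces $a_1, a_2 \in K_1$ with $a_1$ a $v_0$-uniformiser, $a_2$ a $v_0$-unit whose residue makes $\llangle \overline{a_2}\rrsquare/K_1 v_0$ anisotropic, and in characteristic zero with $\llangle a_1, a_2\rrsquare$ also isotropic over $\mathbb{R}$ and $\mathbb{Q}_2$. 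The unramifiedness and trivial residue extension of $w/v_0$ mean $a_1$ is still a $w$-uniformiser and the residue of $a_2$ in $L_2 w$ is unchanged, so Lemma \ref{lem:anisotropic_over_dvf} applied to $(L_2, w)$ gives the required anisotropy.

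For the inductive step with $d - 2 \geq 1$, I choose a discrete valuation $u$ on $K_2$ which is trivial on $K_1$, whose residue field $k := K_2 u$ is finitely generated separable over $K_1$ of transcendence degree $d - 3$, and which admits an extension $u'$ to $L_2$ that is unramified with $\ell := L_2 u'$ finite separable over $k$. Such $u$ comes from a codimension-one integral subvariety on a regular $K_1$-model of $K_2$: generic smoothness makes $k/K_1$ separable, and choosing the subvariety outside the ramification divisor of a compatible model of $L_2/K_2$ ensures unramifiedness of $u'/u$. The inductive hypothesis applied to $(k, \ell)$ yields a $(d-1)$-fold Pfister form $\llangle b_1, \ldots, b_{d-1}\rrsquare/k$ anisotropic over $\ell$, with $b_{d-2}, b_{d-1} \in K_1$ and the appropriate isotropy condition on $\llangle b_{d-2}, b_{d-1}\rrsquare$ in characteristic zero. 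Lifting $b_1, \ldots, b_{d-3}$ to $a_1, \ldots, a_{d-3} \in \mathcal{O}_u \cap K_2$, taking a $u$-uniformiser $a_{d-2} \in K_2$, and setting $a_{d-1} := b_{d-2}$, $a_d := b_{d-1}$, a single application of Lemma \ref{lem:anisotropic_over_dvf} to $(L_2, u')$---whose residue form at $u'$ is precisely $\llangle b_1, \ldots, b_{d-1}\rrsquare/\ell$, anisotropic by induction, and for which $a_{d-2}$ remains a uniformiser by unramifiedness---delivers anisotropy of $\llangle a_1, \ldots, a_d\rrsquare$ over $L_2$.

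The principal obstacle is the construction of the valuation $u$ with all the required properties simultaneously. In characteristic not two, this follows from generic étaleness of the separable extension $L_2/K_2$ together with Bertini-type arguments to choose codimension-one subvarieties on a smooth model of $K_2/K_1$ that miss the ramification locus. In characteristic two, one invokes the standing resolution-of-singularities hypothesis to obtain sufficiently well-behaved models and to avoid inseparability phenomena in the choice of subvariety; this is the step of the argument most dependent on that hypothesis.
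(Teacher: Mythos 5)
Your overall skeleton (Chebotarev plus Lemma \ref{lem:pfister_over_global_fields} in the base case; a divisorial valuation unramified in $L_2$, the inductive hypothesis on the residue situation, and Lemma \ref{lem:anisotropic_over_dvf} with a uniformiser in the inductive step) is the same as the paper's, and the base case is fine. The genuine gap is in how you produce the valuation $u$ in the inductive step. You justify its existence via a \emph{regular or smooth projective $K_1$-model} of $K_2$, generic smoothness, Bertini, and -- in characteristic two -- the resolution hypothesis. But this lemma must hold \emph{unconditionally}: it is invoked (via Lemma \ref{lem:R}) in every odd characteristic $p$ with $d$ arbitrary, where no resolution hypothesis is in force, and smooth (or even regular) projective models of a general finitely generated field $K_2$ over $K_1=\F_p(t)$ are simply not known to exist once the dimension exceeds three; generic smoothness is moreover a characteristic-zero tool, and even in characteristic two Hypothesis \ref{hyp:resolution_of_singularities} speaks of varieties over \emph{finite} fields, not over $K_1$, so it does not directly yield the model you want. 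Separability of the residue field $k/K_1$ and unramifiedness of $u$ in $L_2$ therefore have no unconditional justification in your argument as written, and this is exactly the step that would fail.

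The paper sidesteps all of this with one observation you are missing: the statement only gets \emph{stronger} when $K_2$ is shrunk, so by a separating transcendence basis one may assume $K_2=K_1(s_1,\dotsc,s_{d-2})$. Writing $K_2=F(s_{d-2})$ with $F=K_1(s_1,\dotsc,s_{d-3})$, the divisorial valuations trivial on $F$ are completely explicit (irreducible polynomials in $F[s_{d-2}]$ and the degree valuation), all but finitely many are unramified in $L_2$ purely because $L_2/K_2$ is separable, and -- a further simplification over your version -- the inductive Pfister form is taken over the genuine subfield $F\subseteq K_2$ with residue extension $E/F$, so no lifting of coefficients from a residue field is needed and no algebraic geometry enters at all. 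Your lifting step and the placement of the uniformiser in a non-terminal slot (harmless, since the bilinear slots of a Pfister form may be permuted, also in characteristic two) are fine; the missing reduction to the purely transcendental case is what your proof needs to become unconditional.
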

\begin{proof}  
  The statement becomes stronger when we shrink $K_2$, so by considering a separating transcendence basis of $K_2/K_1$ we may assume that $K_2= K_1(s_1, \dotsc, s_{d-2})$ for some transcendental elements $s_i$.

  We use induction on $d$.
  When $d=2$, $K_2 = K_1$ and $L_2$ is a global field.
  By the Chebotarev Density Theorem, there exist infinitely many primes $\mathfrak{p}$ of $K_2$ which are completely split in $L_2$.
  Hence by Lemma \ref{lem:pfister_over_global_fields} we may pick a Pfister form $\llangle a_1, a_2\rrsquare$ over $K_1$ which remains anisotropic over $L_2$ since it is anisotropic over a henselisation of $L_2$.
  In characteristic zero, we may always take $a_1, a_2 \in \Q^\times$ such that $\llangle a_1, a_2\rrsquare$ is anisotropic over $L_2$, but isotropic over $\Q_2$ and $\R$.
  Hence $\llangle a_1, a_2\rrsquare$ is a Pfister form as desired.

  Consider now $d>2$ and write $K_2 = F(s_{d-2})$ for $F=K_1(s_1, \dotsc, s_{d-3})$. There are infinitely many valuations on $K_2$ trivial on $F$ with value group $\Z$ -- it is well-known that these consist of one valuation for every irreducible polynomial in $F[s_{d-2}]$, and one further so-called degree valuation.
  
  Since $L_2/K_2$ is separable, only finitely many of these valuations ramify in $L_2$, so we may pick such a valuation $v$ on $K_2$ unramified in $L_2$; choose an extension to $L_2$ and also denote it by $v$.
  Then the residue field of $v$ is a finite separable extension $E/F$. By induction hypothesis, there exists a $d-1$-fold Pfister form $\llangle a_1, \dotsc, a_{d-1}\rrsquare$ over $F$ which remains anisotropic over $E$.
  Let $t$ be a uniformiser of $v$ over $K_2$, which remains a uniformiser of $L_2$ because $v$ is unramified.
  Then $q = \llangle t, a_1, \dotsc, a_{d-1}\rrsquare$ is a Pfister form over $K_2$ which remains anisotropic over $L_2$ by Lemma \ref{lem:anisotropic_over_dvf}.
\end{proof}

\begin{lemma}\label{lem:R}
  For any $t_1, t_2, \dotsc, t_d, s \in K$ satisfying the conditions above,
  the set $R(t_1, t_2, \dotsc, t_d, s)$ is a subring of $K$ with quotient field containing $K_0(t_1, \dotsc, t_d, s)$.
  Furthermore it is finitely generated as a $K_0$-algebra. 
\end{lemma}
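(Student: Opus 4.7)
\emph{The easy parts.} The set $R := R(t_1, \ldots, t_d, s)$ is an intersection of subrings of $K$, hence is itself a subring; each ring in the intersection contains $R_0$ and $s$ by definition, so $R \supseteq R_0[s]$ and the quotient field of $R$ contains $K_0(t_1, \ldots, t_d, s)$.

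\emph{Strategy for finite generation.} I plan to prove that $R$ is integral over $R_0$. Granting this, $R$ lies inside the integral closure $\widetilde{R_0}$ of $R_0$ in $K$, which is a finite $R_0$-module (because $R_0 = K_0[t_1, \ldots, t_d]$ is a polynomial ring over a field and $K/\mathrm{Frac}(R_0)$ is a finite extension), and since $R_0$ is Noetherian, $R$ is a finitely generated $R_0$-module, a fortiori a finitely generated $K_0$-algebra.

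\emph{The key construction.} To prove integrality I must exhibit, for each $x \in K$ not integral over $R_0$, a ring in the defining family of $R$ that excludes $x$. By Lemma \ref{lem:not_integral_exists_bad_divisor}, there is an equicharacteristic rank-one valuation $v$ on $K$ arising from a height-one prime of $R_0$, with finitely generated residue field $Kv$ of Kronecker dimension $d-1$, such that $R_0 \subseteq \mathcal{O}_v$ and $x \notin \mathcal{O}_v$ (and then automatically $s \in \mathcal{O}_v$, as $s$ is integral over $R_0$ and $\mathcal{O}_v$ is integrally closed). After reordering the $t_i$ we may assume $\overline{t}_1, \ldots, \overline{t}_{d-1}$ is a transcendence basis of $Kv$ over the prime field (with the convention $t_1 = 1$ in characteristic zero). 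Take $K_2 \subseteq K$ to be the relative algebraic closure of the subfield generated over the prime field by $t_1, \ldots, t_{d-1}$: it is relatively algebraically closed in $K$, has Kronecker dimension $d-1$, and $v$ is trivial on $K_2$ because $vK \cong \Z$ is torsion-free. Fix a global subfield $K_1 \subseteq K_2$ as in Section \ref{sec:building_subrings}. Using Lemma \ref{lem:exist_Pfister_forms_remaining_anisotropic} to obtain an anisotropic $d$-fold Pfister form over $Kv$ with the last two coefficients in the image of $K_1$, lifting all coefficients to units $\tilde a_i \in \mathcal{O}_v^\times$ (with $\tilde a_{d-1}, \tilde a_d \in K_1$) and prepending a uniformiser $\pi$ of $v$, Lemma \ref{lem:anisotropic_over_dvf} yields a Pfister form $q = \llangle \pi, \tilde a_1, \ldots, \tilde a_d \rrsquare$ anisotropic over $K_v$. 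A suitable $c \in K_1^\times$ satisfying condition (iii) is then chosen by weak approximation. Proposition \ref{prop:constructing_ring} now writes $S_c(q/K) \cdot K_2^\times$ as an intersection $\bigcap_w \mathcal{O}_w$ over certain $w \in \Delta$, and since $v$ lies in this indexing set the ring is contained in $\mathcal{O}_v$ and so excludes $x$.

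\emph{The main obstacle.} The remaining point is that the ring so produced must actually belong to the defining family of $R$: it must contain all of $R_0$ and $s$. The containments $K_0, t_1, \ldots, t_{d-1} \in K_2 \subseteq S_c(q/K) \cdot K_2^\times$ are automatic, so what needs to be checked is that $w(t_d) \geq 0$ and $w(s) \geq 0$ for every other $w$ in the indexing set. Such $w$ are places of the transcendence-degree-one function field $K/K_2$, and $t_d, s$ have only finitely many poles among them; the plan is to eliminate those poles from the indexing set by fine-tuning $q$ (varying $\tilde a_1, \ldots, \tilde a_{d-2}$, or the coefficient $c$, via weak approximation on $K_1$, or producing a suitable class in $H^{d+1}(K)$ via the cohomological local--global machinery of Section \ref{sec:local_global}) so as to force isotropy of $q$ at each exceptional pole while preserving anisotropy at $v$. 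This cohomological fine-tuning is where the bulk of the remaining technical work lies.
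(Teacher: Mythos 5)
Your first two paragraphs and the identification of the obstacle match the paper exactly, but the proof is incomplete precisely where the real work lies: you end by announcing a plan to "fine-tune" $q$ at the finitely many poles of $t_d$ and $s$, without carrying it out. Moreover, the levers you propose are the wrong ones. Varying $c$ cannot help at all: the indexing set $\Delta$ in Proposition \ref{prop:constructing_ring} is determined by the form $q$ alone, so no choice of $c$ removes a valuation from it. Varying the middle coefficients $\tilde a_1,\dotsc,\tilde a_{d-2}$, or producing a class via the machinery of Section \ref{sec:local_global}, while keeping anisotropy over $K_v$ and killing prescribed places, is exactly the nontrivial problem you would still have to solve, so as written the argument has a genuine gap.

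The paper closes this gap with a much simpler device that your sketch misses: the free slot is the \emph{uniformiser} coefficient. After arranging (via Noether normalisation applied to $R_0K_2$, or in your setup simply $R_0K_2=K_2[t_d]$) that all but finitely many valuations $w$ trivial on $K_2$ have $\mathcal{O}_w\supseteq K_2[t]$ and hence, by integral closedness, $\mathcal{O}_w\supseteq R_0[s]$, one is left with finitely many bad places $w_1,\dotsc,w_k$, all of rank one and inequivalent to $v$. Weak approximation at the independent valuations $v,w_1,\dotsc,w_k$ produces $b_0$ which is a uniformiser at $v$ and satisfies $w_i(b_0-1)>0$; then $\llangle b_0,\dotsc,b_d\rrsquare$ is still anisotropic over $K_v$ by Lemma \ref{lem:anisotropic_over_dvf}, while the subform $\llangle b_0,b_d\rrsquare$, hence $q$, is isotropic over each $K_{w_i}$ by Lemmas \ref{lem:isotropic_in_henselisation_char_not_two} and \ref{lem:isotropic_in_henselisation_char_two}, so the $w_i$ simply drop out of $\Delta$ and the ring $S_c(q/K)\cdot K_2^\times$ contains $K_2[t]$ and therefore $R_0[s]$. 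No cohomological fine-tuning is needed. A secondary wrinkle: in positive characteristic your $K_2$, built from a reordering of the $t_i$, need not make $Kv$ separable over the relevant rational subfield, which Lemma \ref{lem:exist_Pfister_forms_remaining_anisotropic} requires; the paper instead chooses $r_1\in Kv$ with $Kv/K_0(r_1)$ separable and lifts a transcendence basis of $Kv/K_1$ to build $K_2$, which you should do as well.
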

The key result here is that $R(t_1, t_2, \dotsc, t_d, s)$ is finitely generated as a $K_0$-algebra. This is essentially a matter of proving that we are intersecting a family of subrings of $K$ which is sufficiently large.
\begin{proof}
  Write $R = R(t_1, t_2, \dotsc, t_d, s)$.
  The set $R$ is by definition an intersection of subrings of $K$ containing $R_0$ and $s$, and therefore is itself a subring of $K$ containing $R_0[s]$.
  Hence it has quotient field containing $K_0(t_1, \dotsc, t_d, s)$.
  The integral closure of $R_0$ in $K$ is a finite $R_0$-module by \cite[Theorem 4.14]{Eisenbud_CommAlg} (integral domains finitely generated over fields are Japanese).
  If we can show that $R$ is contained in this integral closure, then $R$ is also a finite $R_0$-module since $R_0$ is Noetherian, and therefore $R$ will be a finitely generated $K_0$-algebra since $R_0$ is.

  Hence it remains to show that $R$ is integral over $R_0$.
  Let $x \in K$ be not integral over $R_0$, so by Lemma \ref{lem:not_integral_exists_bad_divisor} there exists an equicharacteristic valuation $v$ on $K$ with valuation ring $\mathcal{O}_v$ containing $R_0$, value group $\Z$ and residue field $Kv$ of Kronecker dimension $d-1$ such that $v(x) < 0$.
  
  If $\kar K = 0$, let $r_1=1$ and $K_1 = \Q$; otherwise, choose an element $r_1 \in Kv$ transcendental over the prime field such that $Kv$ is separable over $K_0(r_1)$, and also write $r_1$ for a fixed lift of $r_1$ in $K$. In this way, $K_1 = K_0(r_1)$ is a subfield of $K$ which is identified with a subfield of $Kv$.
  By lifting a transcendence basis of $Kv/K_1$, we can furthermore find a subfield $K_2 \supseteq K_1$ of $K$ of Kronecker dimension $d-1$ on which $v$ is trivial. We may assume that $K_2$ is relatively algebraically closed in $K_2$ by passing to the relative algebraic closure. 
  
  Consider the ring $R_2 = R_0 K_2$. This is a finitely generated $K_2$-algebra with fraction field of transcendence degree $1$ over $K_2$, so by the Noether Normalisation Lemma we can choose $t \in R_2$ with $R_2$ integral over $K_2[t]$.
  Since $K$ is a finite field extension of $K_2(t)$, there are only finitely many valuations $w_1, \dotsc, w_k$ on $K_2(t)$ which are trivial on $K_2$ and extend the degree valuation on $K_2(t)$, i.e.\ have $w_i(t) < 0$.
  All other valuations on $K$ trivial on $K_2$ necessarily have valuation ring containing $K_2[t]$ and therefore containing $R_0$ by integral closedness of valuation rings.
  
  By Lemma \ref{lem:exist_Pfister_forms_remaining_anisotropic} there exists an anisotropic Pfister form $\llangle a_1, \dotsc, a_d\rrsquare$ over $Kv$ with $a_{d-1},a_d \in K_1$.
  We may lift the coefficients $a_i$ to elements $b_i \in K$ while maintaining that $b_{d-1}, b_d \in K_1$; in characteristic zero, we may furthermore take $\llangle b_{d-1}, b_d\rrsquare$ to be isotropic over $\Q_2$ and $\R$.

  Pick $b_0 \in K^\times$ which is a uniformiser for $v$, and has $w_i(b_0-1) >0$ for the $w_i$ from above; this is possible by weak approximation for finitely many independent valuations.
  The Pfister form $q = \llangle b_0, \dotsc, b_d\rrsquare$ over $K$ is anisotropic over the henselisation $K_v$ by Lemma \ref{lem:anisotropic_over_dvf}.
  Furthermore $q$ is isotropic over the henselisations $K_{w_i}$, since even the subform $\llangle b_0, b_d\rrsquare$ is isotropic over these henselisations by choice of $b_0$ and Lemmas \ref{lem:isotropic_in_henselisation_char_not_two} and \ref{lem:isotropic_in_henselisation_char_two}.
  
  By Proposition \ref{prop:constructing_ring}, we can choose $c \in K_1^\times$ such that $S_c(q/K) \cdot K_2^\times$ is an integrally closed subring of $K$ contained in $\mathcal{O}_v$ (and hence not containing $x$) and containing $K_2[t]$, and therefore containing $R_0$ and even $R_0[s]$ by integral closedness.
  This proves the claim.
\end{proof}

\section{Definability, biinterpretation and axiomatisability}

As in the last section, let $K$ be a finitely generated field of Kronecker dimension $d>1$, and if $\kar K = 2$ assume that $d \leq 3$ or resolution of singularities holds over finite fields of characteristic two.
Write $K_0 \subseteq K$ for the relative algebraic closure of the prime field as before.

Recall that in the last section we worked with elements $t_1, \dotsc, t_d, s \in K$ such that $K$ is a finite extension of $K_0(t_1, \dotsc, t_d)$, $t_1 = 1$ if $\kar K = 0$, and $s$ is integral over $K_0[t_1, \dotsc, t_d]$.

\begin{proposition}\label{prop:definable}
  The subring $R(t_1, \dotsc, t_d, s)$ is first-order definable in $K$ in the language of rings in terms of $t_1, \dotsc, t_d, s$.
  In other words, there exists a first-order formula $\psi(X, T_1, \dotsc, T_d, S)$ in the language of rings such that we have \[ R(t_1, \dotsc, t_d, s) = \{ x \in K \colon K \models \psi(x, t_1, \dotsc, t_d, s) \} \]
  for any $t_1, \dotsc, t_d, s \in K$ as above.
  The defining formula $\psi$ only depends on $d$, and is otherwise independent of $K$.
\end{proposition}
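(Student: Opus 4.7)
The plan is to translate the definition of $R(t_1, \dotsc, t_d, s)$, an intersection of subrings $S_c(q/K) \cdot K_2$, into a single universal first-order formula $\psi$. Two ingredients are needed: uniform first-order formulas for (a) membership in $S_c(q/K)$, with $c$ and the coefficients of $q$ as parameters, and (b) membership in $K_2$, with suitable parameters encoding $K_2$.

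For (a), Remark~\ref{rem:alternative_defn_S'} furnishes, for each $d$, an existential formula $\sigma_d(y; c, a_0, \dotsc, a_d)$ independent of $K$: one quantifies over $2^{d+1}$ pairs of elements of $K$ representing a tuple in $A = K[X]/(X^2+(1-y)X+c)$ and asserts that it is a zero of the Pfister form $\llangle a_0, \dotsc, a_d \rrsquare$ generating the unit ideal of $A$.

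For (b), any relatively algebraically closed subfield $K_2 \subseteq K$ of Kronecker dimension $d-1$ equals the relative algebraic closure in $K$ of $K_0(u_1, \dotsc, u_{d-1})$ for some $\vec u$ algebraically independent over the prime field (taking $u_1 = 1$ in characteristic zero so that $u_2, \dotsc, u_{d-1}$ form a transcendence basis over $\Q$). So we quantify over tuples $\vec u \in K^{d-1}$ and encode $z \in K_2(\vec u)$ as ``$z$ is algebraic over $K_0(\vec u)$''. The main obstacle is the uniform first-order expressibility of this algebraicity predicate by formulas $\alpha_n(z; u_1, \dotsc, u_n)$ for $n \leq d$: the minimal polynomial of $z$ over $K_0(\vec u)$ has a priori unbounded degree, so no naive formula suffices. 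Such uniform first-order definitions of the relative algebraic closure of a finitely generated subfield are however available in all finitely generated fields by the techniques of \cite{poonenUniformDefnsInFinGenFields}; the same machinery expresses that $\vec u$ is algebraically independent over the prime field (equivalently, that $K_2(\vec u)$ has Kronecker dimension $d-1$, phrased as ``there exists $u_d$ such that every $z\in K$ satisfies $\alpha_d(z; \vec u, u_d)$''), ruling degenerate tuples out of the quantification.

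With $\sigma_d$ and the $\alpha_n$ in hand, ``$x \in T := S_c(q/K) \cdot K_2(\vec u)$'' becomes $\exists y, z : \sigma_d(y; c, a_0, \dotsc, a_d) \wedge \alpha_{d-1}(z; \vec u) \wedge x = yz$, and the condition that $T$ be a subring of $K$ containing $R_0$ and $s$ reduces to closure under $+$ and $\cdot$ together with $t_1, \dotsc, t_d, s \in T$ (once $T$ is a subring, $1 \in T$ forces $K_2(\vec u) \subseteq T$ and hence $K_0 \subseteq T$, so $R_0 = K_0[t_1, \dotsc, t_d] \subseteq T$ follows from the explicit generators being in $T$). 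The formula
\[ \psi(x; T_1, \dotsc, T_d, S) \colon \ \forall c, a_0, \dotsc, a_d, \vec u \, \bigl( [\vec u \text{ alg.\ indep.\ and } T \text{ a valid subring for } T_1, \dotsc, T_d, S] \to x \in T \bigr) \]
then depends only on $d$ and defines $R(t_1, \dotsc, t_d, s)$ in $K$, since the parametrisation above exhibits every relatively algebraically closed subfield of Kronecker dimension $d-1$ as $K_2(\vec u)$ for some tuple $\vec u$ in the quantification.
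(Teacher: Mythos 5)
Your proposal is correct and follows essentially the same route as the paper: the existential formula for $S_c(q/K)$ read off from Remark \ref{rem:alternative_defn_S'}, Poonen's uniform formulas for algebraic (in)dependence over the prime field to parametrise the relatively algebraically closed subfields $K_2$ of Kronecker dimension $d-1$ by tuples of field elements, and a universal formula expressing the intersection, with the containment of $R_0$ reduced to containment of $K_0$ and the generators. The only wrinkle is your uniform guard ``$\exists u_d$ such that every $z$ is algebraic over the prime field adjoined $\vec u, u_d$'': in characteristic zero this is not equivalent to algebraic independence of $\vec u$ and also admits the degenerate case $K_2 = K$, but this is harmless since the corresponding terms of the intersection are all of $K$; the paper instead sidesteps this by using Poonen's sentence distinguishing the characteristics and building $\psi$ separately in each case.
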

\begin{proof}
  By \cite[Theorem 1.1]{poonenUniformDefnsInFinGenFields}, there is a sentence which distinguishes finitely generated fields of characteristic zero from finitely generated fields of positive characteristic.
  This allows us to construct our formula $\psi$ separately in the cases of characteristic zero and of positive characteristic.
  
  By \cite[Theorem 1.4]{poonenUniformDefnsInFinGenFields}, for every $n \geq 1$ there exists a formula $\psi_n(X_1, \dotsc, X_n)$ such that for any $x_1, \dotsc, x_n \in K$ we have $K \models \psi_n(x_1, \dotsc, x_n)$ if and only if the $x_i$ are algebraically dependent over the prime field.

  This means that we can define the subfield $K_0 \subseteq K$ as the set of $x \in K$ such that $K \models \psi_1(x)$.

  Similarly, for any $e\geq 0$ the relatively algebraically closed subfields $K_2 \subseteq K$ of transcendence degree $e$ over the prime field are exactly the sets of the form $\{ x \in K \colon K \models \psi_{e+1}(x, x_1, \dotsc, x_{e}) \}$, where the $x_1, \dotsc, x_{e} \in K$ satisfy $K \models \neg\psi_{e}(x_1, \dotsc, x_{e})$.
  Hence the family of relatively algebraically closed subfields $K_2 \subseteq K$ of Kronecker dimension $d-1$ is definable.

  By inspection of the definition of $R(t_1, \dotsc, t_d, s)$, it therefore suffices to show that the predicate $S$ is definable, i.e.\ there is a formula $\varphi(X, C, A_0, \dotsc, A_d)$ such that $K \models \varphi(x, c, a_0, \dotsc, a_d)$ if and only if $x \in S_c(\llangle a_0, \dotsc, a_d\rrsquare/K)$.

  Such a formula, even existential, can be read off from Definition \ref{defn:S} and Remark \ref{rem:alternative_defn_S'}:
  We have $x \in S_c(\llangle a_0, \dotsc, a_d\rrsquare/K)$
  if and only if there exist elements $x_1, \dotsc, x_{2^{d+1}} \in K[X]/(X^2+(1-x)X+c)$ generating the unit ideal in $K[X]/(X^2+(1-x)X+c)$ and which are a zero of $\llangle a_0, \dotsc, a_d\rrsquare$.
  By identifying elements of $K[X]/(X^2+(1-x)X+c)$ with pairs of elements of $K$, e.g.\ identifying $(b_1, b_2) \in K^2$ with the image of the polynomial $b_1X + b_2$ in $K[X]/(X^2+(1-x)X+c)$, and then expanding addition and multiplication in $K[X]/(X^2+(1-x)X+c)$ in terms of pairs $(b_1, b_2)$, this is seen to be an existential first-order property.
\end{proof}

We now take $t_1, t_2, \dotsc, t_d, s$ such that $K$ is the quotient field of $K_0[t_1, \dotsc, t_d, s]$; this is possible, since we may arrange for $K/K_0(t_1, \dotsc, t_d)$ to be separable and hence have a primitive element $s$:
this is automatic in characteristic zero, and in characteristic $p>0$ we may take $t_1, \dotsc, t_d$ to be a separating transcendence basis of $K$ over the prime field $\F_p$ since $\F_p$ is perfect.
The primitive element $s$ for $K/K_0(t_1, \dotsc, t_d)$ can always be taken to be integral over $K_0[t_1, \dotsc, t_d]$, since multiplying $s$ by a non-zero element of $K_0[t_1, \dotsc, t_d]$ does not change $K_0(t_1, \dotsc, t_d, s)$.

Now $R(t_1, \dotsc, t_d, s)$ is definable by Proposition \ref{prop:definable} and has quotient field $K$ by Lemma \ref{lem:R}.
Hence we have proven Theorem \ref{thm:intro_exists_small_definable_subring} from the introduction.

Let us remark that the deduction of a positive answer to Pop's question from the definability of certain subrings of $K$ has been known since at least \cite{Scanlon_PopsQuestion} (unaffected by the error \cite{Scanlon_PopsQuestionErratum} therein).
Here, we establish Corollary \ref{cor:intro_biinterpretable_quasiaxiomatisable} by using techniques from \cite{AschenbrennerKhelifNaziazenoScanlon} closely related to \cite{Scanlon_PopsQuestion}, in particular the notion of bi-interpretability of structure discussed in Section 2 thereof. 
Whenever we speak of definability, interpretability or bi-interpretability, we always allow parameters.
Let us fix $t_1, \dotsc, t_d, s$ as above and write $R = R(t_1, \dotsc, t_d, s) \subseteq K$.

\begin{lemma}\label{lem:K_R_biinterpretable}
  The rings $K$ and $R$ are bi-interpretable.
\end{lemma}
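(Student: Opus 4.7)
The plan is to exhibit interpretations in both directions and verify the compatibility conditions for bi-interpretability, following \cite[Section 2]{AschenbrennerKhelifNaziazenoScanlon}. All interpretations and definable isomorphisms are understood with parameters, and the tuple $(t_1, \ldots, t_d, s)$ will serve as the parameter set in both directions; note that these parameters all lie in $R \subseteq K$, so they are available on either side.

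First, interpret $R$ in $K$ one-dimensionally: Proposition \ref{prop:definable} supplies a formula $\psi(X, t_1, \ldots, t_d, s)$ whose solution set in $K$ is exactly $R$, and the ring operations on $R$ are the restrictions of those on $K$. Conversely, interpret $K$ in $R$ by the standard fraction field construction, using two-tuples: an element of $K$ is represented by a pair $(a, b) \in R \times R$ with $b \neq 0$, modulo the equivalence $(a,b) \sim (c,d) \iff ad = bc$, with the usual formulas for addition and multiplication. This is well-defined because, by Lemma \ref{lem:R}, $R$ is a domain with $\operatorname{Frac}(R) = K$.

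For bi-interpretability one must check that the two round-trip compositions are definably isomorphic to the identity interpretations. The composite ``interpret $K$ in $R$, then $R$ inside that $K$'' produces, inside $R$, a structure whose universe consists of equivalence classes of pairs $(a,b) \in R^2$, $b \neq 0$, satisfying $\psi$ when evaluated in the constructed $K$; the natural bijection $r \mapsto [r,1]$ from $R$ to this structure is definable in $R$ by the first-order relation $rb = a$. The composite in the opposite order produces, inside $K$, a structure whose universe consists of equivalence classes of pairs $(a,b)$ with $a, b \in R$ (definable via $\psi$) and $b \neq 0$; the isomorphism $K \to (\text{this structure})$, sending $x$ to $[a,b]$ whenever $xb = a$, is definable in $K$ by that same equation.

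The main point to be careful about is that these round-trip bijections are actually \emph{definable} rather than merely abstract isomorphisms — but this is exactly the content of the first-order relation $xb = a$ between an element of the ambient structure and a pair representing a fraction, so it presents no real difficulty. A minor bookkeeping matter is that the parameters $t_1, \ldots, t_d, s$ need to be named in the interpretation of $K$ in $R$; since these parameters lie in $R$, each is named by itself (or rather, by the class $[t_i, 1]$) under the fraction field construction, so the two interpretations are genuinely inverse to one another as parametrised interpretations.
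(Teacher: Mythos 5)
Your proposal is correct and follows essentially the same route as the paper: interpret $R$ in $K$ via the defining formula from Proposition \ref{prop:definable}, interpret $K$ in $R$ as the fraction field on pairs, and witness the bi-interpretation by the definable identifications $r \mapsto [r,1]$ and $x \mapsto [a,b]$ with $xb=a$. Your additional remarks on the round-trip compositions and on naming the parameters $t_1,\dotsc,t_d,s$ inside the interpreted copy are just a more explicit spelling-out of what the paper leaves implicit.
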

\begin{proof}
  We established above that $R$ is definable (and hence interpretable) in $K$. Since $K$ is the quotient field of $R$, we can interpret $K$ in $R$ as the set of pairs of elements $\{ (a, b) \in R^2 \colon b \neq 0 \}$ under the obvious equivalence relation, with definable addition and multiplication.
  In this way, the element $r \in R$ is identified with the (equivalence class of) the pair $(r, 1)$ in the interpreted copy of $K$. In the converse direction, any $x \in K$ is identified with the equivalence class $\{ (a, b) \in R^2 \colon b \neq 0, x = a/b \}$, which is definable in $K$, uniformly in $x$. 
  This means that the pair of interpretations given is in fact a bi-interpretation.
\end{proof}

\begin{lemma}\label{lem:R_Z_biinterpretable}
  The ring $R$ is bi-interpretable with $\Z$.
\end{lemma}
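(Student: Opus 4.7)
The plan is to invoke the bi-interpretability theorem of \cite{AschenbrennerKhelifNaziazenoScanlon}, which states that every infinite finitely generated commutative ring is bi-interpretable with $\Z$. The argument then divides by characteristic.

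In positive characteristic, the relative algebraic closure $K_0$ of $\F_p$ in $K$ is a finite field. Hence $R$, being finitely generated over $K_0$ by Lemma \ref{lem:R}, is automatically finitely generated as a $\Z$-algebra, and is infinite because its fraction field $K$ is infinite. The result then follows directly from \cite{AschenbrennerKhelifNaziazenoScanlon}.

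In characteristic zero, $K_0$ is a number field and $R \supseteq K_0 \supseteq \Q$, so $R$ is not finitely generated as a $\Z$-algebra and \cite{AschenbrennerKhelifNaziazenoScanlon} does not apply directly. I would fix algebraic integers $\gamma_1, \dotsc, \gamma_m$ generating $K_0$ over $\Q$ and consider the auxiliary subring $R' = \Z[\gamma_1, \dotsc, \gamma_m, t_2, \dotsc, t_d, s]$, a finitely generated $\Z$-subalgebra of $R$ whose fraction field is again $K$. By AKNS, $R'$ is bi-interpretable with $\Z$, so it suffices to bi-interpret $R$ with $R'$. One direction, parametrically interpreting $R$ inside $R'$, is obtained by first interpreting $K = \operatorname{Frac}(R')$ in $R'$ and then defining $R$ inside $K$ using Proposition \ref{prop:definable}. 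The other direction, parametrically interpreting $R'$ inside $R$, reduces to defining $R' \subseteq R$ by a first-order formula; for this one defines $\Z$ inside $R$ --- possible because $\Z$ is uniformly definable in every finitely generated field of characteristic zero, via Poonen \cite{poonenUniformDefnsInFinGenFields} combined with Julia Robinson's classical definition of $\Z$ inside number fields --- and then describes $R'$ as the range, at the tuple $(\gamma_1, \dotsc, \gamma_m, t_2, \dotsc, t_d, s)$, of evaluation of polynomials with integer coefficients, which is a first-order condition once $\Z$ is definable, by G\"odel-style coding of polynomials of unbounded degree as tuples of integers. Finally one verifies that the two interpretations thus produced compose definably to the identity, establishing bi-interpretability.

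The main obstacle is the characteristic-zero step: navigating around the fact that $R$ is not a finitely generated ring in the sense of AKNS, and ensuring the passage from $R$ to the finitely generated $\Z$-subalgebra $R'$ is first-order uniform and compatible with the bi-interpretation. The phrase ``results adapted from \cite{AschenbrennerKhelifNaziazenoScanlon}'' in the introduction indicates that, rather than being used as a black box, the core techniques of that paper --- uniform definability of $\Z$ and first-order coding of arithmetic --- are transported into the present setting of a ring finitely generated over its prime field rather than over $\Z$.
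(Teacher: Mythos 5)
Your positive-characteristic case is exactly the paper's argument: $K_0$ is finite, so $R$ is an infinite finitely generated integral domain and \cite[Theorem 3.1]{AschenbrennerKhelifNaziazenoScanlon} applies directly. In characteristic zero you diverge from the paper, and there is a genuine gap at the crucial step. You propose to interpret $R'=\Z[\gamma_1,\dotsc,\gamma_m,t_2,\dotsc,t_d,s]$ inside $R$ by \emph{defining} $R'$ as the set of values of integer polynomials at the fixed tuple, claiming this is ``a first-order condition once $\Z$ is definable, by G\"odel-style coding of polynomials of unbounded degree as tuples of integers.'' Coding the \emph{coefficients and exponents} as integers is indeed available once $\Z$ is definable, but expressing that a given $x\in R$ \emph{equals the evaluation} of a coded polynomial of unbounded degree requires quantifying over the finite sequence of intermediate partial sums and products, and these are elements of $R$ (or $K$), not integers. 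Definable G\"odel coding of finite sequences of ring or field elements is not a consequence of the definability of $\Z$ in $R$; it is precisely the hard technical core here, and it is also what your final unproven assertion that ``the two interpretations compose definably to the identity'' secretly needs (the composite $R\to R'\to R$ is the identity only if the evaluation graph is definable in $R$). So as written the argument does not close.

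For comparison, the paper avoids constructing $R'$ altogether: it proves a proposition that any integral domain $A$ finitely generated over $\Q$ or $\F_p(t)$ is bi-interpretable with $\Z$, by defining the relative algebraic closure $D$ of the ground field inside $A$ via Poonen's predicates $\psi_n$ (so $D$ is a definable global field), invoking \cite{RumelyUndecidabilityGlobalFields} for the definability of G\"odel functions for \emph{all finite sequences} in $D$, and then running the proof of \cite[Theorem 3.1, pp.~33--34]{AschenbrennerKhelifNaziazenoScanlon}, which is written for domains finitely generated over a subring $D$ with global fraction field and which supplies exactly the sequence-coding and self-identification machinery your sketch takes for granted. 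Your route could presumably be repaired, but only by importing that same Rumely/AKNS machinery (sequence coding in the definable global subfield $K_0$, extended to $R$ via a Noether-normalisation or module-finiteness argument), at which point it essentially reproduces the paper's proof with the extra, unnecessary detour through $R'$.
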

\begin{proof}
  Recall that $R$ is a finitely generated algebra over the prime field of $K$.
  In particular, if $\kar K \neq 0$, then $R$ is a finitely generated ring which is a domain, and the result follows from \cite[Theorem 3.1]{AschenbrennerKhelifNaziazenoScanlon}.
  In characteristic zero, this result cannot be invoked as written, since $\Q$ is not a finitely generated ring; the result needed is the following proposition.
\end{proof}

\begin{proposition}
  Let $A$ be an integral domain which is finitely generated over $\Q$ or $\F_p(t)$ for some prime $p$. Then $A$ is bi-interpretable with $\Z$.
\end{proposition}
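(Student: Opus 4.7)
The plan is to reduce the claim to \cite[Theorem 3.1]{AschenbrennerKhelifNaziazenoScanlon}, which was already used in the proof of Lemma~\ref{lem:R_Z_biinterpretable} for the case of a finitely generated (as a ring) infinite integral domain. The obstruction to applying it directly is that the ground rings $\Q$ and $\F_p(t)$ are themselves not finitely generated over $\Z$, so $A$ need not be a finitely generated ring; however, $A$ is a localization of a suitable finitely generated subring, to which the result does apply.

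Concretely, I would pick generators $x_1, \ldots, x_n \in A$ of $A$ as a $\Q$- (respectively $\F_p(t)$-) algebra, and set $B := \Z[x_1, \ldots, x_n]$ (resp.\ $B := \F_p[t, x_1, \ldots, x_n]$). Then $B$ is a finitely generated, infinite integral domain, and so is bi-interpretable with $\Z$ by \cite[Theorem 3.1]{AschenbrennerKhelifNaziazenoScanlon}. A direct check shows that $A = S^{-1}B$, where $S \subseteq B$ is the multiplicative set of nonzero elements of $\Z \cdot 1_B$ (resp.\ of $\F_p[t] \cdot 1_B$); indeed, adjoining inverses to the coefficient ring recovers $A$ from $B$. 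The task is then to upgrade the bi-interpretation $B \sim \Z$ to $A \sim \Z$ by bi-interpreting $A$ with $B$.

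For interpreting $A$ in $B$, one uses the standard encoding of a localization: represent $a \in A$ by a pair $(b, s) \in B \times S$ modulo the equivalence $(b,s) \sim (b', s')$ iff $bs' = b's$, with the induced ring structure. This is first-order in $B$ provided $S \subseteq B$ is definable, which follows from the bi-interpretation $B \sim \Z$, since the latter supplies a definable copy of $\Z$ (resp.\ $\F_p[t]$) inside $B$ that one identifies with the prime subring (resp.\ with $\F_p[t]\cdot 1_B$) via the uniqueness of the characteristic map. For interpreting $B$ in $A$ using $x_1, \ldots, x_n$ as parameters, one exhibits $B$ as a definable subring of $A$, which reduces to defining $\Z \cdot 1_A$ (resp.\ $\F_p[t] \cdot 1_A$) inside $A$: once this is done, $B$ appears as the image of polynomial evaluation in the $x_i$ over $\Z \cdot 1_A$, a first-order construction.

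The principal obstacle is exactly this last step. Defining $\Z \cdot 1_A \subseteq A$ does not follow immediately from the definability of the analogous subring in $B$, since transporting a defining formula across the inclusion $B \hookrightarrow A$ requires $B$ itself to be already defined, producing a chicken-and-egg situation. I would resolve this by setting up the two definitions simultaneously using the bi-interpretation of $B$ with $\Z$ (together with the bi-interpretability of $\Q$ with $\Z$ established in \cite{AschenbrennerKhelifNaziazenoScanlon}) and carefully relativising quantifiers to $B$ inside $A$ by means of the parameters $x_i$. What remains is then a routine, if somewhat tedious, verification that the two interpretations compose to definable isomorphisms of $A$ (resp.\ $\Z$) with its double interpretation inside itself, promoting mutual interpretability to a genuine bi-interpretation in the spirit of \cite[Section 2]{AschenbrennerKhelifNaziazenoScanlon}.
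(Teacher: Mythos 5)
Your reduction to a finitely generated subring is fine as far as it goes: with $B=\Z[x_1,\dotsc,x_n]$ (resp.\ $\F_p[t,x_1,\dotsc,x_n]$) one indeed has $A=S^{-1}B$, $B$ is an infinite finitely generated domain, hence bi-interpretable with $\Z$ by \cite[Theorem 3.1]{AschenbrennerKhelifNaziazenoScanlon}, and interpreting $A$ in $B$ as the localisation is unproblematic (the multiplicative set $S$ is definable in $B$ because $B$ is bi-interpretable with $\Z$). The genuine gap is the converse direction, i.e.\ producing an interpretation of $B$ (equivalently of $\Z$) in $A$ together with the definable isomorphisms required for a bi-interpretation, and your sketch does not supply it. Two concrete points fail. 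First, even granting that $\Z\cdot 1_A$ (resp.\ $\F_p[t]\cdot 1_A$) is definable in $A$, the set of values $p(x_1,\dotsc,x_n)$ with $p$ ranging over polynomials with coefficients in that subring is \emph{not} ``a first-order construction'': membership in the subring generated by finitely many elements requires quantifying over coefficient sequences of unbounded length, i.e.\ Gödel coding of finite sequences inside the structure, which is precisely the nontrivial content of \cite{RumelyUndecidabilityGlobalFields} and of the proof of \cite[Theorem 3.1]{AschenbrennerKhelifNaziazenoScanlon}. Second, the definability of $\Z\cdot 1_A$ in $A$ is itself a deep fact (already for $A=\Q$ it is Robinson's theorem), and your proposed fix --- ``relativising quantifiers to $B$ inside $A$'' --- presupposes that $B$ is already definable in $A$, which is exactly the circularity you point out; saying one will ``set up the two definitions simultaneously'' is not an argument that breaks it. Note also that even mutual interpretability would not suffice: the definable isomorphism of $A$ with its double interpretation is an additional, and here nontrivial, requirement.

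For comparison, the paper takes a different decomposition that circumvents these issues: it defines inside $A$ the relative algebraic closure $D$ of the ground field (a number field, resp.\ a global function field), using Poonen's uniformly definable algebraic-dependence predicates $\psi_n$ --- a substantive external input replacing your attempt to define $\Z\cdot 1_A$ directly; it then gets bi-interpretability of $D$ with $\Z$ from Rumely's definable Gödel functions in global fields; and finally it runs the \emph{proof} of \cite[Theorem 3.1]{AschenbrennerKhelifNaziazenoScanlon} (pp.~33--34) for $A$ finitely generated over $D$. That argument is what supplies the sequence-coding needed to define objects like finitely generated subrings and to produce the definable isomorphisms --- the ingredients missing from your sketch. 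If you want to pursue your localisation route, you would still need essentially this machinery to interpret $\Z$ in $A$ and to verify the bi-interpretation identities, at which point the detour through $B$ gains you nothing.
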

\begin{proof}
  Write $D$ for the relative algebraic closure of the ground field in $A$; by using Poonen's predicates $\psi_n$ in the quotient field of $A$ as in the proof of Proposition \ref{prop:definable}, we see that $D$ is a global field definable in $A$.
  By \cite{RumelyUndecidabilityGlobalFields}, Gödel functions for all finite sequences are definable in $D$, and hence $D$ is bi-interpretable with $\Z$.
  The remainder of the proof is exactly the same as \cite[ Theorem 3.1 (pp.~33-34)]{AschenbrennerKhelifNaziazenoScanlon}, where integral domains $A$ are considered which are finitely generated over some finitely generated integral domain $D$ whose quotient field is a global field.
\end{proof}

\begin{corollary}\label{cor:final_biinterpretable_quasiaxiomatisable}
  The ring $K$ is bi-interpretable with $\Z$. There exists a formula $\varphi_K$ in the language of rings such that for any finitely generated field $L$ we have $L \models \varphi_K$ if and only if $L$ is isomorphic to $K$.
\end{corollary}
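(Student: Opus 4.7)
The first assertion will follow immediately from Lemmas \ref{lem:K_R_biinterpretable} and \ref{lem:R_Z_biinterpretable} by transitivity of bi-interpretation: $K$ is bi-interpretable with $R$, and $R$ with $\Z$, so $K$ is bi-interpretable with $\Z$.

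For the existence of $\varphi_K$, my plan is to combine the uniform definability of Proposition \ref{prop:definable} with the quasi-finite axiomatisability of the ring $R$ supplied by the argument of Lemma \ref{lem:R_Z_biinterpretable} (that is, by \cite[Theorem 3.1]{AschenbrennerKhelifNaziazenoScanlon} in positive characteristic, and by its characteristic-zero variant immediately preceding this corollary). Fix parameters $t_1, \dotsc, t_d, s \in K$ so that $R = R(t_1, \dotsc, t_d, s)$ has quotient field $K$ (Lemma \ref{lem:R}), and let $\psi$ be the formula from Proposition \ref{prop:definable}. The cited results supply a sentence $\sigma_R$ in the language of rings whose only finitely generated models are rings isomorphic to $R$. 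I would then let $\varphi_K$ assert: there exist parameters $T_1, \dotsc, T_d, S$ and a finite tuple of further witnesses such that the set $R' := \{ x : \psi(x, T_1, \dotsc, T_d, S) \}$ is a subring of the ambient field, is finitely generated over the prime field (as witnessed by the extra tuple), satisfies $\sigma_R$, and has the ambient field as its field of fractions.

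If $L$ is a finitely generated field with $L \models \varphi_K$, then the witnessed subring $R' \subseteq L$ is a finitely generated ring satisfying $\sigma_R$, hence is isomorphic to $R$; consequently $L = \mathrm{Frac}(R') \cong \mathrm{Frac}(R) = K$. Conversely, $K$ satisfies $\varphi_K$ with the parameters chosen above, by Lemma \ref{lem:R} and Proposition \ref{prop:definable}.

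The main delicate point I expect lies in encoding ``$R'$ is finitely generated over the prime field'' as a single first-order clause in $\varphi_K$, so that $\sigma_R$, which only pins down $R$ within the class of finitely generated rings, may legitimately be applied. Bounding the number of generators uniformly in $d$ through Lemma \ref{lem:R}, and in characteristic zero accommodating that $R$ is only finitely generated over $\Q$ and not over $\Z$, fits into the scheme of \cite[Section 7]{AschenbrennerKhelifNaziazenoScanlon}, whose template I would follow closely.
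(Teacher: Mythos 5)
Your first claim is exactly the paper's argument: the bi-interpretability of $K$ with $\Z$ follows from Lemmas \ref{lem:K_R_biinterpretable} and \ref{lem:R_Z_biinterpretable} and transitivity, so that part is fine.

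For the sentence $\varphi_K$, however, your route has a genuine gap, and it sits precisely at the point you flag. In your converse direction you need the witnessed set $R' = \{x \in L : \psi(x, T_1, \dotsc, T_d, S)\}$ to be a \emph{finitely generated} ring (respectively a finitely generated $\Q$-algebra in characteristic zero), because $\sigma_R$ only pins down $R$ within the class of finitely generated rings/algebras; by L\"owenheim--Skolem there are non-finitely-generated models of $\sigma_R$ not isomorphic to $R$, so $R' \models \sigma_R$ alone yields nothing. But finite generation of a definable subring is not a first-order property: ``every element of $R'$ is a polynomial over the prime field in the extra witnesses'' is an infinite disjunction over degrees, and the only external hypothesis available -- that $L$ is finitely generated \emph{as a field} -- does not transfer to the subring $R'$ (definable subrings of finitely generated fields need not be finitely generated rings). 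Nor can one simply re-run Lemma \ref{lem:R} inside $L$ to see that $\psi$ automatically cuts out a finitely generated ring: that argument needs the parameters to satisfy conditions like ``$S$ is integral over $K_0[T_1,\dotsc,T_d]$'', again not first-order, and if the intersected family happens to be empty then $\psi$ defines all of $L$. The paper sidesteps the whole issue by a different device: it enriches the language of rings by the (definable) function $\operatorname{inv}$, so that finitely generated fields are literally finitely generated structures, and then applies \cite[Proposition 2.28]{AschenbrennerKhelifNaziazenoScanlon} directly to $K$, which by the first part is a finitely generated structure bi-interpretable with $\Z$; there the finite generation of the test structure $L$ is used externally, exactly as given, and no first-order encoding of finite generation is ever required. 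To rescue your relativisation strategy you would have to prove, not merely postulate, that the first-order clauses you impose force $R'$ to be finitely generated, which in effect amounts to reproving the general principle behind Proposition 2.28.
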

\begin{proof}
  Since bi-interpretability is an equivalence relation, the first part follows immediately from Lemmas \ref{lem:K_R_biinterpretable} and \ref{lem:R_Z_biinterpretable}.
  For the second part, we may as well consider fields in an enriched language containing -- beside the usual symbols from the language of rings -- a unary function symbol $\operatorname{inv}$, to be interpreted as $\operatorname{inv}(x) = x^{-1}$ for every non-zero element $x$ and $\operatorname{inv}(0) = 0$, since this is obviously definable in terms of the other symbols.
  Now finitely generated fields are in fact finitely generated structures in this language -- this is precisely what it means to be a finitely generated field --, so the result follows from \cite[Proposition 2.28]{AschenbrennerKhelifNaziazenoScanlon}.
\end{proof}

\bibliographystyle{amsalpha}
\bibliography{../Bibliography}

\providecommand{\bysame}{\leavevmode\hbox to3em{\hrulefill}\thinspace}
\providecommand{\MR}{\relax\ifhmode\unskip\space\fi MR }
\providecommand{\MRhref}[2]{%
  \href{http://www.ams.org/mathscinet-getitem?mr=#1}{#2}
}
\providecommand{\href}[2]{#2}
\begin{thebibliography}{EKM08}

\bibitem[ADF19]{adfApproximation}
Sylvy Anscombe, Philip Dittmann, and Arno Fehm, \emph{Approximation theorems
  for spaces of localities}, preprint, available as arXiv:1901.02632 [math.AC],
  2019.

\bibitem[AKNS]{AschenbrennerKhelifNaziazenoScanlon}
Matthias Aschenbrenner, Anatole Khélif, Eudes Naziazeno, and Thomas Scanlon,
  \emph{The logical complexity of finitely generated rings}, Int. Math. Res.
  Notices, to appear.

\bibitem[CP09]{CossartPiltant_ResolutionOfSingularitiesOfThreefoldsII}
Vincent Cossart and Olivier Piltant, \emph{Resolution of singularities of
  threefolds in positive characteristic {II}}, Journal of Algebra \textbf{321}
  (2009), 1836--1976.

\bibitem[Dit18a]{definingSubringsOfFinGenFieldsOfCharNotTwo}
Philip Dittmann, \emph{Defining subrings in finitely generated fields of
  characteristic not two}, manuscript, available as arXiv:1810.09333v1
  [math.LO], 22 October 2018.

\bibitem[Dit18b]{DittmannThesis}
\bysame, \emph{A model-theoretic approach to the arithmetic of global fields},
  doctoral thesis, University of Oxford, 2018.

\bibitem[Eis04]{Eisenbud_CommAlg}
David Eisenbud, \emph{Commutative algebra with a view toward algebraic
  geometry}, Springer, 2004.

\bibitem[EKM08]{ElmanKarpenkoMerkurjev_AlgGeomTheoryOfQuadForms}
Richard Elman, Nikita Karpenko, and Alexander Merkurjev, \emph{The algebraic
  and geometric theory of quadratic forms}, American Mathematical Society,
  2008.

\bibitem[EP05]{englerPrestel}
Antonio~J. Engler and Alexander Prestel, \emph{Valued fields}, Springer, 2005.

\bibitem[GS17]{centralSimpleAlgebrasAndGalCohom}
Philippe Gille and Tamás Szamuely, \emph{Central simple algebras and {Galois}
  cohomology}, second ed., Cambridge University Press, 2017.

\bibitem[HK94]{HuberKnebusch}
Roland Huber and Manfred Knebusch, \emph{On valuation spectra}, Contemporary
  Mathematics \textbf{155} (1994), 167--206.

\bibitem[IT14]{travauxGabberX}
Luc Illusie and Michael Temkin, \emph{{Exposé X. Gabber's modification theorem
  (log smooth case)}}, Travaux de {G}abber sur l'uniformisation locale et la
  cohomologie étale des schémas quasi-excellents (Luc Illusie, Yves Laszlo,
  and Fabrice Orgogozo, eds.), Astérisque, vol. 363--364, Société
  Mathématique de France, 2014, pp.~167--212.

\bibitem[Jan16]{JannsenHassePrinciples}
Uwe Jannsen, \emph{Hasse principles for higher-dimensional fields}, Ann. Math.
  \textbf{183} (2016), no.~1, 1--71.

\bibitem[Kat86]{kato}
Kazuya Kato, \emph{A {Hasse} principle for two-dimensional global fields}, J.
  reine angew. Math. \textbf{366} (1986), 142--180.

\bibitem[KS12]{kerzSaito}
Moritz Kerz and Shuji Saito, \emph{Cohomological {Hasse} principle and motivic
  cohomology for arithmetic schemes}, Publ. Math. IHES \textbf{115} (2012),
  no.~1, 123--183.

\bibitem[Liu02]{LiuAlgGeomAndArithCurves}
Qing Liu, \emph{Algebraic geometry and arithmetic curves}, Oxford University
  Press, 2002.

\bibitem[Neu92]{Neukirch}
Jürgen Neukirch, \emph{{Algebraische Zahlentheorie}}, Springer, 1992.

\bibitem[NSW08]{NeukirchSchmidtWingberg}
Jürgen Neukirch, Alexander Schmidt, and Kay Wingberg, \emph{Cohomology of
  number fields}, second ed., Springer, 2008.

\bibitem[Poo07]{poonenUniformDefnsInFinGenFields}
Bjorn Poonen, \emph{Uniform first-order definitions in finitely generated
  fields}, Duke Math. J. \textbf{138} (2007), no.~1, 1--21.

\bibitem[Poo09]{poonenUniversalExistential}
\bysame, \emph{Characterizing integers among rational numbers with a
  universal-existential formula}, Amer. J. Math. \textbf{131} (2009), no.~3,
  675--682.

\bibitem[Poo17]{poonenRationalPoints}
\bysame, \emph{Rational points on varieties}, American Mathematical Society,
  2017.

\bibitem[Pop02]{elemEquivVsIsomI}
Florian Pop, \emph{Elementary equivalence versus isomorphism}, Inventiones
  math. \textbf{150} (2002), 385--408.

\bibitem[Pop17]{elemEquivVsIsomII}
\bysame, \emph{Elementary equivalence versus isomorphism {II}}, Algebra \&
  Number Theory \textbf{11} (2017), 2091--2111.

\bibitem[Pop18]{PopDistinguishingEveryFinitelyGeneratedFieldOfCharNotTwo}
\bysame, \emph{Distinguishing \noopsort{b}every finitely generated field of
  characteristic $\neq 2$ by a single field axiom}, preprint, available as
  arXiv:1809.00440v2 [math.AG], 14 December 2018.

\bibitem[Rum80]{RumelyUndecidabilityGlobalFields}
Robert~S.\ Rumely, \emph{Undecidability and definability for the theory of
  global fields}, Trans. Amer. Math. Soc. \textbf{262} (1980), no.~1, 195--217.

\bibitem[Sca08]{Scanlon_PopsQuestion}
Thomas Scanlon, \emph{Infinite finitely generated fields are biinterpretable
  with $\mathbb{N}$}, J. Amer. Math. Soc. \textbf{21} (2008), no.~3, 893--908.

\bibitem[Sca11]{Scanlon_PopsQuestionErratum}
\bysame, \emph{Erratum to ``{Infinite} finitely generated fields are
  biinterpretable with $\mathbb{N}$''}, J. Amer. Math. Soc. \textbf{24} (2011),
  no.~3, 917.

\bibitem[Ser97]{cohomologieGaloisienne}
Jean-Pierre Serre, \emph{Cohomologie galoisienne}, Springer, 1997, cinquième
  édition.

\bibitem[Spi96]{anArithmeticProofOfPopsTheorem}
Michael Spiess, \emph{An arithmetic proof of {Pop's Theorem} concerning
  {Galois} groups of function fields over number fields}, J. reine angew. Math.
  \textbf{478} (1996), 107--126.

\bibitem[Suw95]{Suwa_ANoteOnGerstensConjecture}
Noriyuki Suwa, \emph{A note on {Gersten's} conjecture for logarithmic
  {Hodge--Witt} sheaves}, $K$-theory \textbf{9} (1995), no.~3, 245--271.

\bibitem[Wad83]{WadsworthPHenselianFieldsKTheoryGalCohomGWittRings}
Adrian~R.\ Wadsworth, \emph{$p$-henselian fields: {$K$}-theory, {Galois}
  cohomology and graded {Witt} rings}, Pacific J. Math. \textbf{105} (1983),
  no.~2, 473--496.

\end{thebibliography}

\end{document}